\theoremstyle{plain}
  \newtheorem{theorem}{Theorem}
  \newtheorem{proposition}{Proposition}
  \newtheorem{corollary}[proposition]{Corollary}
  \newtheorem{lemma}[proposition]{Lemma}
\theoremstyle{definition}
  \newtheorem{ex}{Example}
  \newtheorem{rem}{Remark}
\newcommand{\Dh}{\mathrm{DH}}
\renewcommand{\H}{\mathcal{H}}
\newcommand{\R}{\mathbb R}
\newcommand{\RR}{{\mathbb R}}
\newcommand{\C}{\mathbb C}
\newcommand{\Cone}{\mathcal{C}}
\newcommand{\glnc}{\mathrm{GL}(n,\C)}
\newcommand{\Un}{\mathrm{U}(n)}
\newcommand{\B}{\mathcal{B}}
\newcommand{\T}{\mathbb T}
\newcommand{\captionfonts}{\small}
\newcommand{\conez}{\Delta_0}
\newcommand{\coneb}{\Cone_{\B}}
\newcommand{\ctrop}{\Cone_{\T}^\circ}
\newcommand{\conekt}{\Cone_{\mathrm{KT}}}
\newcommand{\conegz}{\Delta{}_{\mathrm{GZ}}}
\newcommand{\rntri}{\R^{3n}}
\newcommand{\coneho}{\Cone_{\mathrm{Horn}}}
\newcommand{\lbb}{l^\B}
\newcommand{\ls}{\sigma}
\renewcommand{\AA}{Q}
\newcommand{\uu}{\mathfrak{u}}
\newcommand{\LL}{\Lambda}
\newcommand{\WW}{\widehat{W}}
\newcommand{\ctropfull}{{\Cone}_\T}
\long\def\@makecaption#1#2{%
  \vskip\abovecaptionskip
  \sbox\@tempboxa{{\captionfonts #1: #2}}%
  \ifdim \wd\@tempboxa >\hsize
    {\captionfonts #1: #2\par}
  \else
    \hbox to\hsize{\hfil\box\@tempboxa\hfil}%
  \fi
  \vskip\belowcaptionskip}
\begin{document}
\author{Anton Alekseev}

\address{Anton Alekseev, Department of Mathematics, University of Geneva, 2-4 rue du Li\`evre,
c.p. 64, 1211 Gen\`eve 4, Switzerland}

\email{Anton.Alekseev@unige.ch}

\author{Masha Podkopaeva}

\address{Maria Podkopaeva, Department of Mathematics, University of Geneva, 2-4 rue du Li\`evre,
c.p. 64, 1211 Gen\`eve 4, Switzerland \& St. Petersburg State University, Department of Mathematics and Mechanics, Chebyshev Laboratory, 14 Liniya VO 29B, 199178 St.~Petersburg, Russia}
\email{Maria.Podkopaeva@unige.ch}

\author{Andras Szenes}

\address{Andras Szenes, Department of Mathematics, University of Geneva, 2-4 rue du Li\`evre,
c.p. 64, 1211 Gen\`eve 4, Switzerland}

\email{Andras.Szenes@unige.ch}
\title[]{A symplectic proof of the Horn inequalities}

\begin{abstract}
In this paper, we give a symplectic proof of the Horn inequalities on eigenvalues of a sum
of two Hermitian matrices with given spectra. Our method is a combination of tropical calculus
for matrix eigenvalues, combinatorics of planar networks, and estimates for the Liouville volume.
As a corollary, we give a tropical description of the Duistermaat--Heckman measure on the Horn
polytope.

\end{abstract}
\maketitle
\section{Introduction}

\subsection{The Horn problem}\label{hornintro}
Fix a positive integer $n$, and let $\H$ be the set of Hermitian
matrices of size $n$. For $K \in \H$, denote by $\lambda(K)=$ \mbox{$(\lambda_1
\geq \lambda_2, \geq \dots \geq \lambda_n)$} the set of eigenvalues of
$K$ listed in decreasing order, and introduce the map $l: \H\to
\R^{n}$ defined by the equalities
\[
l_1(K) = \lambda_1,\  l_2(K)= \lambda_1 + \lambda_2,\ \dots,\ 
l_n(K)= \lambda_1 + \dots + \lambda_n = {\rm Tr}(K).
\]

 We will call the set
\begin{equation}\label{horncone}\begin{split}
  \Cone_{\mathrm{Horn}}& = \{(a,b,c)\in\rntri;\ 
\exists(K_1,K_2)\in\H^{\times2}: \\ & l(K_1)=a,
  l(K_2)=b,l(K_1+K_2)=c\}\end{split}
\end{equation}
the {\em Horn cone}. 

Clearly, $\Cone_{\mathrm{Horn}}$ is a closed subset of the hyperplane
\[
\{(a,b,c)\in\rntri;
a_n+b_n=c_n\}\subset\rntri,
\] and $\tau
\Cone_{\mathrm{Horn}}=\Cone_{\mathrm{Horn}}$ for any $\tau>0$.

The problem of determining this cone, known as the Horn problem, has a
long history (see \cite{fulton} for details).   The first conjectural description 
was given by Horn \cite{horn} in 1962; it presents
$\Cone_{\mathrm{Horn}}$ as the set of solutions of a complicated, recursively defined list of
linear inequalities. This description, in particular, implies that
$\Cone_{\mathrm{Horn}}$ is a closed convex cone. Later,  a natural
explanation for this fact was found in terms of convexity properties of moment maps in symplectic
geometry.   

In 1999, Knutson and Tao came up with the following much simpler,
albeit implicit description of $\Cone_{\mathrm{Horn}}$ (see \cite{KT},
\cite{Buch}).  Consider the regular triangulation of order $n$ of an
equilateral triangle. The triangle is divided into $n^2$ small
triangles. Two adjacent triangles form a rhombus, which can be of one
of the three types shown in Figure~\ref{hive}.

\begin{figure}[h]\label{hive}\begin{center}
\begin{tikzpicture}
\fill[black] (0,6) circle (2pt) node[above]{$l_0^n$};
\fill[black] (1,6) circle (2pt) node[above]{$l_1^n$};
\fill[black] (2,6) circle (2pt);
\fill[black] (3,6) circle (2pt);
\fill[black] (4,6) circle (2pt);
\fill[black] (5,6) circle (2pt) node[above]{$l_n^n$};
\fill[black] (0.5,5.3) circle (2pt);
\fill[black] (1.5,5.3) circle (2pt);
\fill[black] (2.5,5.3) circle (2pt);
\fill[black] (3.5,5.3) circle (2pt);
\fill[black] (4.5,5.3) circle (2pt);
\fill[black] (1,4.6) circle (2pt);
\fill[black] (2,4.6) circle (2pt);
\fill[black] (3,4.6) circle (2pt);
\fill[black] (4,4.6) circle (2pt);
\fill[black] (1.5,3.9) circle (2pt);
\fill[black] (2.5,3.9) circle (2pt);
\fill[black] (3.5,3.9) circle (2pt);
\fill[black] (2,3.2) circle (2pt) node[left]{$l_0^1$};
\fill[black] (3,3.2) circle (2pt) node[right]{$l_1^1$};
\fill[black] (2.5,2.5) circle (2pt) node[below]{$l_0^0$};
\draw[opacity=0.3] (0,6) -- (5,6);
\draw[opacity=0.3] (0.5,5.3)  -- (4.5,5.3);
\draw[opacity=0.3] (1,4.6) -- (4,4.6);
\draw[opacity=0.3] (1.5,3.9) -- (3.5,3.9);
\draw[opacity=0.3] (2,3.2) -- (3,3.2);
\draw[opacity=0.3] (0,6) -- (2.5,2.5) -- (5,6);
\draw[opacity=0.3] (1,6) -- (3,3.2);
\draw[opacity=0.3] (2,6) -- (3.5,3.9);
\draw[opacity=0.3] (3,6) -- (4,4.6);
\draw[opacity=0.3] (4,6) -- (4.5,5.3);
\draw[opacity=0.3] (4,6) -- (2,3.2);
\draw[opacity=0.3] (3,6) -- (1.5,3.9);
\draw[opacity=0.3] (2,6) -- (1,4.6);
\draw[opacity=0.3] (1,6) -- (0.5,5.3);
\draw[ultra thick] (1,4.6) -- (2,4.6) -- (2.5,5.3) -- (1.5,5.3) -- cycle;
\draw[ultra thick, dotted] (3.5,5.3) -- (4.5,5.3) -- (4,6) -- (3,6) -- cycle;
\draw[ultra thick, dashed] (3,3.2) -- (3.5,3.9) -- (3,4.6) -- (2.5,3.9) -- cycle;
\end{tikzpicture}\end{center}
\caption{The triangular tableau with three types of rhombi.}\label{hive}
\end{figure}
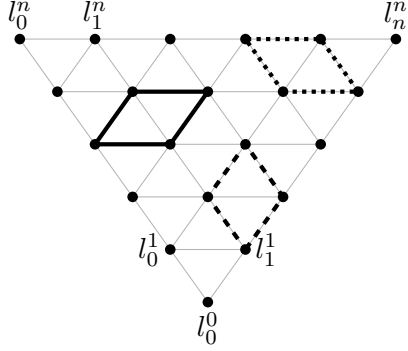

We will call the assignment of a real number to each of the nodes of
the triangulation a {\em tableau}. Denoting by $\nabla$ the set of
nodes of the triangulation, we can identify the space of tableaux with $\R^\nabla$.

Let $l_i^k$ be the number at the $i$th node in the $k$th row of the
triangulation, $0 \leq i \leq k \leq n$. Then each rhombus gives rise
to an inequality: the sum of the two numbers assigned to the endpoints
of the short diagonal is greater than or equal to the sum of the two
numbers assigned to the endpoints of the long diagonal. A tableau is
called a {\em hive} if it satisfies all the inequalities, i.e., if for
$0 < i \leq k < n$,
\begin{equation}  \label{hiveineq}
\begin{array}{lll}
l^{k+1}_{i} + l^{k}_{i-1} & \geq & l^{k+1}_{i-1} + l^{k}_{i},  \\
l^{k+1}_{i} + l^{k}_{i} & \geq & l^{k+1}_{i+1} + l^{k}_{i-1},\\
l^{k}_{i} + l^{k}_{i-1} & \geq & l^{k+1}_{i} + l^{k-1}_{i-1}.
\end{array}
\end{equation}

Clearly, the set of hives $\Cone_3$, defined by the three sets of
inequalities \eqref{hiveineq}, is a closed cone in $\R^\nabla$.
Now consider the boundary map: $\partial:\R^\nabla\to\R^{3n}$ given by
\begin{equation*}
  \label{eq:1}
   a_i=l^n_i ,\ b_i=l^{n-i}_{n-i}-l^n_n,\
       c_i = l^{n-i}_0,\ 1\leq i\leq n
\end{equation*}
Denote by $\conekt = \partial(\Cone_3)$ the polyhedral cone obtained
as the image of $\Cone_3$ along this map.

\begin{theorem}[Knutson--Tao] 
\label{knutao}
The Horn cone coincides with the Knutson--Tao cone: $\coneho=\conekt$.
\end{theorem}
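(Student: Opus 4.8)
The plan is to deduce $\coneho=\conekt$ from the fact that both are closed convex cones — $\coneho$ by the moment‑map convexity theorem and $\conekt=\partial(\Cone_3)$ by construction — so that it is enough to match their relative interiors, and in fact their generic points. The bridge between the two sides is the Duistermaat--Heckman measure: for fixed regular spectra $\lambda,\mu$, let $M=\mathcal O_\lambda\times\mathcal O_\mu$ be the product of coadjoint orbits, viewed as pairs of Hermitian matrices with those spectra, with the diagonal $\Un$‑action and moment map $\Phi(K_1,K_2)=K_1+K_2$. Then $l\circ\Phi$ pushes the Liouville measure forward to a measure $\Dh_{\lambda,\mu}$ on $\R^n$ whose support is precisely the slice $\{c:(\lambda,\mu,c)\in\coneho\}$. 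On the combinatorial side, $\{c:(\lambda,\mu,c)\in\conekt\}$ is by definition the set of $c$ for which the hive fibre $\partial^{-1}(\lambda,\mu,c)\cap\Cone_3$ is non‑empty, i.e.\ the support of the fibre‑volume function $c\mapsto\vol\bigl(\partial^{-1}(\lambda,\mu,c)\cap\Cone_3\bigr)$. So it suffices to show that $\Dh_{\lambda,\mu}$ and this hive‑volume measure have the same support for all regular $\lambda,\mu$.

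To compare them I would compute $\Dh_{\lambda,\mu}$ \emph{tropically}. The eigenvalues of a sum of Hermitian matrices (equivalently, after passing through $\exp$, the singular values of a product) admit a degeneration — a Gelfand--Tsetlin‑type toric degeneration of the two orbits — in which $l_i(K_1+K_2)$ becomes an explicit piecewise‑linear, min/max‑plus function of natural coordinates on $M$, evaluated by a planar network in the sense of Lindström--Gessel--Viennot; this is the ``tropical calculus for matrix eigenvalues''. The min/max‑plus identities constraining the weights of such a network are governed by exactly the three families of rhombus inequalities \eqref{hiveineq}, so the tropicalisation of $\Dh_{\lambda,\mu}$ is, up to a normalising constant, the hive‑fibre‑volume measure $c\mapsto\vol\bigl(\partial^{-1}(\lambda,\mu,c)\cap\Cone_3\bigr)$ — which also yields the promised tropical description of the Duistermaat--Heckman measure on the Horn polytope. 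This already gives $\conekt\subseteq\coneho$: a non‑empty hive fibre forces the corresponding tropical, hence (by the estimates below) the genuine, Liouville volume to be positive, so the Horn fibre is non‑empty.

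The remaining inclusion $\coneho\subseteq\conekt$ — the new point — and the rigour of the whole argument hinge on the hardest step: showing that the tropical computation controls the genuine measure. Concretely, over compact subsets of the interior one must prove that the density of $\Dh_{\lambda,\mu}$ and the density of its tropical limit are comparable, each bounded by a fixed multiple of the other with constants bounded away from $0$ and $\infty$, so that positivity of the limiting density is neither created nor destroyed. This is where the ``estimates for the Liouville volume'' do the work: one would squeeze each moment‑map fibre between two toric pieces whose volumes are computable and converge to the hive‑fibre volume. Granting such estimates, $\Dh_{\lambda,\mu}$ and the hive‑volume measure share a support for every regular $\lambda,\mu$; since $\coneho$ and $\conekt$ are closed convex cones, a closure/density argument together with a small perturbation to reduce to the regular case (handling the walls where $\lambda,\mu$ or $(a,b,c)$ degenerate) upgrades this to $\coneho=\conekt$. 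A subsidiary point to pin down early is the exact dictionary between the planar‑network weights and the hive coordinates $l^k_i$, so that the min/max‑plus relations match \eqref{hiveineq} on the nose rather than an affinely equivalent system.
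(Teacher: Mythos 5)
Your overall architecture --- compare the Duistermaat--Heckman measure with a ``tropical'' measure computed from planar networks, show their supports agree, and upgrade to equality of closed convex cones --- is indeed the strategy of the paper. But there is a genuine gap at the exact point you pass over with the parenthetical ``equivalently, after passing through $\exp$, the singular values of a product.'' That is not an equivalence; it is Klyachko's theorem (the Thompson conjecture, Theorem \ref{klyachko}), and at the level of measures it is Theorem \ref{measure} of the paper, proved in the Appendix via the linearization of Poisson--Lie group actions (Alekseev, Alekseev--Meinrenken--Woodward). The Lindstr\"om/planar-network calculus is intrinsically \emph{multiplicative}: concatenation of networks corresponds to matrix multiplication, so the tropical limit controls minors, hence singular values, of products $AC$ with $A,C$ upper-triangular. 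It does not produce a toric degeneration of $\H_r\times\H_s$ in which $l_i(K_1+K_2)$ becomes piecewise-linear --- no such direct additive tropicalization is constructed anywhere, and the eigenvalues of a sum are not piecewise-linear functions of Gelfand--Zeitlin action coordinates (they depend on the angles). The paper's actual chain is $\conekt=\ctrop$ (prior work \cite{apsz}), $\ctrop\subset\coneb$ (the exponential estimates of Propositions \ref{limit}--\ref{proph}), $\coneb\setminus\ctrop$ has $\mu_{r,s}$-measure zero and hence is empty by DH positivity (Theorems \ref{measurezero}, \ref{empty}), and finally $\coneb=\coneho$ by Klyachko. Your proposal collapses the multiplicative detour into the additive problem, and that collapse is precisely the hard content you would need to supply.

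Two further points. First, the ``exact dictionary'' between network weights and hive coordinates, which you defer as a subsidiary matter, is itself a nontrivial prior theorem ($\ctrop=\conekt$, Theorem \ref{t=kt}, proved in \cite{apsz}); it cannot be dispatched by inspection of the rhombus inequalities. Second, your two-sided density comparability over compact subsets of the interior is stronger than what is proved and not obviously attainable directly: the tropical approximation fails on a $\delta$-neighborhood of finitely many hyperplanes, and the paper's argument instead bounds the measure of the exceptional set by $c_4\delta+c_5\varepsilon$ and lets $\delta,\varepsilon\to 0$, then invokes piecewise-polynomiality and interior positivity of the DH density to convert ``measure zero'' into ``empty.'' That last step is also where your remark that a nonempty hive fibre ``forces positive volume'' needs care, since a nonempty fibre can be lower-dimensional; the closure argument must be run on both inclusions.
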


Speyer \cite{speyer} gave another proof of this theorem using Viro's
patchworking and Vinnikov curves. The purpose of the present article
is to provide a proof based on a combination of ideas from tropical and
symplectic geometry.

\subsection{The multiplicative problem}

There is a similar multiplicative problem defined for the group $\B$
of complex upper-triangular matrices of size $n$ with positive entries
on the diagonal.

For a matrix $A\in \B$, the {\em singular values} are defined as the
eigenvalues $\lambda_i(AA^*)$, $i=1,\dots,n$, of the matrix $AA^*$,
which are positive real numbers in this case. The map 
 \begin{equation}
   \label{notationl}
\lbb:\B\to\R^{n},\quad \lbb_i(A)=\frac12 \sum_{k=1}^i \log \,
\lambda_k(AA^*),\,i=1, \dots, n 
 \end{equation}
is intertwined with the map $l:\H\to\R^n$ by the diffeomorphism between $\H$ and $\B$
given by $\exp(2K) = AA^*$.

We can also define the multiplicative analog of the Horn cone:
\begin{equation}\label{coneb}\begin{split}
\coneb = \{(r,s,t)\in\rntri;\ & \exists(A,C)\in \B^{\times2};\\ &\lbb(A)=r, \lbb(C)=s, \lbb(AC)=t\}.
\end{split}\end{equation}
The following surprizing result (also known as the Thompson Conjecture) was proved by 
Klyachko \cite{Klyachko}:

\begin{theorem}[Klyachko]\label{klyachko}
The set $\coneb$ coincides with the Horn cone: $$\coneb = \coneho.$$
\end{theorem}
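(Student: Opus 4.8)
The plan is to deduce Klyachko's identity from the Knutson--Tao theorem (Theorem~\ref{knutao}), which we take as given, together with the identity
\[
\coneb=\conekt ,
\]
whose proof occupies the rest of the paper and carries all of the symplectic and tropical content; granting it, one gets $\coneb=\conekt=\coneho$ at once. Two facts about $\coneb$ will be used freely: it is closed (for fixed $r,s$ the admissible pairs $(A,C)\in\B^{\times2}$ with $\lbb(A)=r$, $\lbb(C)=s$ range over a product of compact sets, via the Cholesky identification $AA^*=\exp(2K)$, on which $\lbb(AC)$ is continuous), and it is a closed convex cone (a consequence of the symplectic convexity theorem for the relevant Hamiltonian $\B$-space).

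The inclusion $\conekt\subseteq\coneb$ is the explicit half: every hive must be realized as the ``dequantization'' of a genuine pair of upper-triangular matrices. Given $l\in\Cone_3$, I would attach to the triangular graph of Figure~\ref{hive} a planar network with positive edge weights of the form $\exp(t\,l^k_i)$ for a large parameter $t$; by the Lindström--Gessel--Viennot lemma its path matrix $M(t)$ is totally positive and lies in $\B$, and each flag minor of $M(t)$, and of $M(t)M(t)^*$, is a positive sum of monomials $\exp\bigl(t\cdot(\text{linear form in the }l^k_i)\bigr)$. The three families of inequalities \eqref{hiveineq} are exactly what guarantees that in each such minor one monomial strictly dominates as $t\to\infty$, so that $\tfrac1{2t}\log\lambda\bigl(M(t)M(t)^*\bigr)$ tends to the prescribed boundary data $a$ and $c$, while a companion computation produces $b$. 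Factoring $M(t)=A(t)\,C(t)$ with $A(t),C(t)\in\B$ according to the two sides of the triangle then displays $\tfrac1t\bigl(\lbb(A(t)),\lbb(C(t)),\lbb(A(t)C(t))\bigr)$ as a point of $\coneb$ converging to $\partial(l)$, and closedness of $\coneb$ finishes this inclusion.

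The reverse inclusion $\coneb\subseteq\conekt$ is the symplectic half. Here I would view $\lbb$ as a moment-map datum and the fibre of $(\lbb,\lbb,\lbb)$ over an interior point $(r,s,t)$ as a reduced space, whose nonemptiness --- that is, the existence of a witness $(A,C)$ --- is detected by positivity of the associated Duistermaat--Heckman density. One then estimates the Liouville volume $\vol$ of these reduced spaces from two sides: an elementary upper bound coming from the symplectic structure, and a lower bound obtained by the tropical degeneration of the previous paragraph, whose leading term as $t\to\infty$ is the Euclidean volume of the polytope of hives with boundary $\partial(l)=(r,s,t)$. That volume is positive precisely when $(r,s,t)\in\conekt$, so matching the two estimates pins the support of the Duistermaat--Heckman measure down to exactly $\conekt$; in particular $\coneb\subseteq\conekt$, and one simultaneously reads off the promised tropical description of the Duistermaat--Heckman measure on the Horn polytope.

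The main obstacle is making the tropical limit of the \emph{eigenvalues} of $M(t)M(t)^*$ --- as opposed to its minors --- rigorous and, crucially, free of cancellations: that the relevant minors have the expected exponential order is routine, but one must exclude accidental cancellation and control the sub-exponential corrections sharply enough that the \emph{volume} asymptotics, not merely the support, come out correctly. This is exactly the step where the combinatorics of the hive inequalities (that ``short diagonal $\ge$ long diagonal'' propagates a single dominant monomial coherently across the whole network) has to be converted into quantitative linear algebra, and it is the crux of the argument.
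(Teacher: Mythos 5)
The first thing to say is that the paper does not prove this theorem: it is imported from Klyachko's work and then \emph{used} as an input --- closedness and polyhedrality of $\coneb$ are derived from it in Corollary \ref{hornconecor}, and the identification of the image polytopes $\Pi_{r,s}$ for the Hermitian and multiplicative problems in Section \ref{poissondh} assumes it. So there is no in-paper proof to compare against; what can be judged is whether your plan stands on its own.

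Two problems. Structurally, you assume Knutson--Tao and aim to prove $\coneb=\conekt$, but this paper's logic runs the other way ($\ctrop=\conekt$ from \cite{apsz}, plus $\ctrop=\coneb$ proved here, plus \emph{Klyachko} yields Knutson--Tao); your direction is legitimate only with an independent proof of Knutson--Tao in hand, and even then the content you must supply for $\coneb=\conekt$ is precisely the content of this paper and of \cite{apsz}. Concretely: the identification $\conekt=\ctrop$ is a genuine combinatorial theorem, not just ``the hive inequalities make one monomial dominate''; the tropical eigenvalue estimates are Proposition \ref{limit}; and --- the ingredient your sketch does not isolate --- one needs the covering statement of Proposition \ref{onemprop}, that away from a small set the correspondence map $M(\Gamma_0;\tau w,\phi)$ is \emph{onto} each Gelfand--Zeitlin fiber in $\B$. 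Without that surjectivity your ``lower bound from tropical degeneration'' cannot be shown to exhaust the total Liouville volume, so the support of the Duistermaat--Heckman measure is not pinned down to $\conekt$. The crux you name (non-cancellation in minors) is real but is the easier of the two missing pieces.

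Second, the symplectic facts you propose to ``use freely'' (convexity of the image of the Lu moment map, existence and positivity of a Duistermaat--Heckman density for the dressing action) come, in the literature, from the linearization theorems of \cite{alekseev} and \cite{amw} --- and those already prove Klyachko's theorem outright: linearization commutes with products, so the image of $\B_r\times\B_s$ under $l^\B\circ\Psi$ coincides with the image of $\H_r\times\H_s$ under $l\circ\Psi_\H$, i.e. $\coneb=\coneho$, with no tropical or hive input at all. (This is exactly the mechanism behind Theorem \ref{measure} in the Appendix.) So your argument either presupposes machinery that renders the detour through $\conekt$ unnecessary, or, if that machinery is withheld, is missing the hard surjectivity step.
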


In particular, this implies that $\coneb$ is a polyhedral cone.

\subsection{Planar networks}\label{planar}
We define one more subset of $\rntri$, this time using the theory of planar networks.

Recall that a  planar network is the following data:
  \begin{itemize}
  \item a finite  oriented planar graph $\Gamma$ with vertex set $V\Gamma$ and edge
    set $E\Gamma$,
  \item an embedding of $\Gamma$ into the strip $\{x_0\leq x \leq
    x_1\}\subset\R^2$ such that the image of each edge is a segment of
    a straight line, which is not parallel to the $y$-axis. This
    condition allows us to define an orientation of $\Gamma$: we
    orient each edge in the positive direction along the $x$-axis.
  \end{itemize}
  The vertices on the line $\{x=x_0\}$ are called {\em sources} and
  the vertices on the line $\{x=x_1\}$ are called {\em sinks} of
  $\Gamma$. A planar network with $n$ sources and $n$ sinks is called
  a planar network {\em of rank $n$}. Without loss of generality, we
  can assume that the set of $y$-coordinates of the sources and sinks
  is the set of the first $n$ integers $\{1,2,\dots,n\}$.

  A $k$-path in $\Gamma$ is a collection of $k$ vertex-disjoint
  oriented paths connecting $k$ sources with $k$ sinks. The set of
  $k$-paths in $\Gamma$ is denoted by $P_k\Gamma$. For $I, J \subset
  \{ 1, \dots, n\}$, two subsets of cardinality $k$, we denote by
  $P_k\Gamma(I, J)$ the set of $k$-paths with the sources correponding
  to $I$ and sinks corresponding to $J$.

  Let $\AA$ be an abelian semigroup with unit, and let $W(\Gamma, \AA)$ be
  the set of weightings of $\Gamma$ with values in this semigroup: $W(\Gamma,
  \AA)=\AA^{E \Gamma}$. For \mbox{$w \in W(\Gamma, \AA)$}
  and a collection of edges $\alpha \subset E\Gamma$, we set
$$
w(\alpha) = \prod_{e \in \alpha} w(e).
$$
If $\alpha = \emptyset$, then we set $w(\alpha)=1_\AA$.

\begin{ex}\label{phases}
  When $\AA=U(1)$ is the group of unitary complex numbers, then we will write
  $\Phi(\Gamma) = W(\Gamma, U(1))$. For a weighting $\phi \in
  \Phi(\Gamma)$, we have
$$
\phi(\alpha) = \prod_{e \in \alpha} \phi(e).
$$
\end{ex}

\begin{ex}
  Consider the tropical semigroup $\T=\R \cup \{ - \infty\}$ with
  group law given by addition: $(x,y) \mapsto x+y $. Then for $w \in
  W(\Gamma, \T)$ we have
\begin{equation}
    \label{tropdef}
w(\alpha) = \sum_{e \in \alpha} w(e).
\end{equation}
\end{ex}

\subsection{Correspondence map}

Let $\Gamma$ be a  planar network of rank $n$ and $w\in W(\Gamma, \AA)$ 
a weighting of $\Gamma$ with values in a commutative semiring
$\AA$. To this pair, we can associate an $n$-by-$n$ matrix with matrix
elements in $\AA$:
\begin{equation}
  \label{correspondence}
  M_{i,j}(\Gamma; w)= \sum_{\alpha \in P_1\Gamma(i,j)} w(\alpha) =
\sum_{\alpha \in P_1(i,j)} \prod_{e\in \alpha} w(e).
\end{equation}

In case the set of paths $P_1\Gamma(i,j)$ is empty, we set $M_{i,j}(\Gamma; w)$
equal to the additive unit (zero) of $\AA$.

Let $\Gamma_1$ and $\Gamma_2$ be two rank-$n$ planar networks and let
$\Gamma=\Gamma_1 \circ \Gamma_2$ be their concatenation, i.e.
$\Gamma$ is a union of $\Gamma_1$ and $\Gamma_2$ with sinks of
$\Gamma_1$ identified with sources of $\Gamma_2$. Then a pair of
weightings $w_1 \in W(\Gamma_1, \AA),\ w_2 \in W(\Gamma_2, \AA)$ gives
rise to the weighting $w=w_1 \circ w_2 \in W(\Gamma, \AA)$, where
$w(e)=w_1(e)$ if $e \in E\Gamma_1$ and $w(e)=w_2(e)$ if $e\in
E\Gamma_2$.

Under the correspondence map, the concatenation of planar networks
corresponds to matrix multiplication:
\begin{equation}
  \label{concat}
M(\Gamma_1 \circ \Gamma_2; w_1 \circ w_2)=M(\Gamma_1; w_1)\cdot M(\Gamma_2; w_2).  
\end{equation}

If $\AA$ is a commutative ring, then we can define
the minors $M_{I,J}(\Gamma; w)$ of the matrix $M(\Gamma; w)$, where $I, J
\subset \{ 1, \dots, n\}$ with $|I|=|J|=k$, but this definition does
not work for a semiring, since it involves signs. The Lindstr\"om Lemma asserts that these
minors can be expressed in terms of multi-paths in $\Gamma$ as follows:
$$
M_{I,J}(\Gamma; w)= \sum_{\alpha \in P_k\Gamma(I,J)} w(\alpha).
$$
Note that the right hand side is well-defined even if $\AA$ is only a semiring.

For $k=1, \dots, n$, we introduce the functions
$$
m_k(\Gamma, w)=\sum_{I, J; |I|=|J|=k} M_{I,J}(\Gamma; w).
$$
If it is clear which planar network is used, we omit $\Gamma$ and use
the shorthand notation $m_k(w)$. When we want to emphasize the
semiring in which $m_k$ takes values, we include it in the notation
$m_k^\AA(w)$.

\begin{ex}
  Let $\T = \mathbb{R} \cup \{ - \infty\}$ be the tropical semiring,
  with addition given by $(x,y) \mapsto \max(x,y)$ and with
  multiplication $(x,y) \mapsto x+y$. The tropical weights are then
  defined by the formula \eqref{tropdef}.
The functions $m^\T_k(\Gamma, w)$ take the form
\begin{equation}\label{tropm}
m^\T_k(\Gamma, w)=\max\{ w(\alpha)| \, \alpha \in P_k\Gamma \} .
\end{equation}
In the case when $P_k\Gamma$ is empty, we set $m^\T_k(\Gamma,
w)=-\infty$ for all weightings $w$.
\end{ex}

Later on, we will see that the functions $m^\T_k$ provide a ``tropical
counterpart'' of the sums of singular values $l^\B_k$. In view of this
analogy, we can introduce ``tropical singular values'' as
$$
\lambda^\T_i(w)=m^\T_i(w) - m^\T_{i-1}(w),
$$
for $i\geq 2$, and $\lambda^\T_1(w) = m^\T_1(w)$. One can show that
$\lambda^\T_i \geq \lambda^\T_{i+1}$ for all $i=1, \dots, n-1$ (the
proof is similar to that of Theorem 2 in \cite{apsz}).

\begin{ex}
  Let $\AA= \T \times U(1)$. The map $(u, \phi) \mapsto \exp(u)\phi$
  from $\AA$ to $\C$ is a homomorphism for the product. We will use the correspondence
  map to define the composition
$$
W(\Gamma, \T \times U(1)) \to W(\Gamma, \C) \to {\rm Mat}_n(\C).
$$
The result is given by the formula
$$
M_{i,j}(\Gamma;u, \phi) = \sum_{\alpha \in P_1\Gamma(i,j)} \exp(u(\alpha)) \phi(\alpha),
$$
where $u$ is a weighting with values in $\T$ and $\phi$ is a weighting with values in $U(1)$.

\end{ex}

\subsection{Results: comparison of different cones}
Let $\Gamma_0$ be the planar network of rank $n$ shown in Figure \ref{net:triang}.

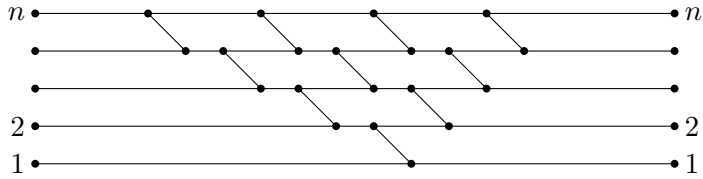
\begin{figure}[h]
\begin{tikzpicture}
\draw (-1,0) -- (7.5,0);
\draw (-1,0.5) -- (7.5,0.5);
\draw (-1,1) -- (7.5,1);
\draw (-1,1.5) -- (7.5,1.5);
\draw (-1,2) -- (7.5,2);
\draw (0.5,2) -- (1,1.5);
\draw (2,2) -- (2.5,1.5);
\draw (3.5,2) -- (4,1.5);
\draw (5,2) -- (5.5,1.5);
\draw (1.5,1.5) -- (2,1);
\draw (3,1.5) -- (3.5,1);
\draw (4.5,1.5) -- (5,1);
\draw (2.5,1) -- (3,0.5);
\draw (4,1) -- (4.5,0.5);
\draw (3.5,0.5) -- (4,0);
\fill[black] (-1,0) circle (1.5pt) node[left]{$1$};
\fill[black] (-1,0.5) circle (1.5pt) node[left]{$2$};
\fill[black] (-1,1) circle (1.5pt);
\fill[black] (-1,1.5) circle (1.5pt);
\fill[black] (-1,2) circle (1.5pt) node[left]{$n$};
\fill[black] (7.5,0) circle (1.5pt) node[right]{$1$};
\fill[black] (7.5,0.5) circle (1.5pt) node[right]{$2$};
\fill[black] (7.5,1) circle (1.5pt);
\fill[black] (7.5,1.5) circle (1.5pt);
\fill[black] (7.5,2) circle (1.5pt) node[right]{$n$};
\fill[black] (0.5,2) circle (1.5pt);
\fill[black] (1,1.5) circle (1.5pt);
\fill[black] (2,2) circle (1.5pt);
\fill[black] (2.5,1.5) circle (1.5pt);
\fill[black] (3.5,2) circle (1.5pt);
\fill[black] (4,1.5) circle (1.5pt);
\fill[black] (5,2) circle (1.5pt);
\fill[black] (5.5,1.5) circle (1.5pt);
\fill[black] (1.5,1.5) circle (1.5pt);
\fill[black] (2,1) circle (1.5pt);
\fill[black] (3,1.5) circle (1.5pt);
\fill[black] (3.5,1) circle (1.5pt);
\fill[black] (4.5,1.5) circle (1.5pt);
\fill[black] (5,1) circle (1.5pt);
\fill[black] (2.5,1) circle (1.5pt);
\fill[black] (3,0.5) circle (1.5pt);
\fill[black] (4,0) circle (1.5pt);
\fill[black] (4,1) circle (1.5pt);
\fill[black] (4.5,0.5) circle (1.5pt);
\fill[black] (3.5,0.5) circle (1.5pt);
\end{tikzpicture}
\caption{The planar network $\Gamma_0$.}\label{net:triang}
\end{figure}

Note that the matrices defined by the correspondence map $M(\Gamma_0;
w)$ are upper-triangular. 

Inspired by the analogy with the multiplicative problem for $\B$
(cf. \eqref{concat}), we can define the following tropical cone:

\begin{equation}\label{tropcone}\begin{split}
\ctropfull =\{(r,s,t)\in\T^{3n};\, &\exists(w_1,w_2)\in W(\Gamma_0, \T)^{\times2}; \\& m^{\T}(w_1) = r,
m^{\T}(w_2)=s, m^{\T}(w_1 \circ w_2) = t \}.
\end{split}\end{equation}
Note that the set of multi-paths $P_k\Gamma_0$ is nonempty for every $k$, 
and hence, we can consider the
``real'' part of this cone:
\begin{equation}
  \label{defrealconet}
  \ctrop = \ctropfull \cap \R^{3n}.
\end{equation}

In \cite{apsz}, we proved the following theorem, which may be thought
of as the solution of the tropical Horn problem.

\begin{theorem}\label{t=kt}
$\ctrop = \conekt$.
\end{theorem}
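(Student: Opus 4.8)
The plan is to prove the equality by establishing the two inclusions $\conekt\subseteq\ctrop$ and $\ctrop\subseteq\conekt$ separately, in each case through an explicit correspondence between hives on the triangular tableau of Figure~\ref{hive} and pairs of tropical weightings on the planar network $\Gamma_0$. The shape of $\Gamma_0$ in Figure~\ref{net:triang} is tailored to this: its $n$ sources, its $n$ sinks, and its internal staircase of diagonal edges match the two diagonal sides, the top row, and the interior of the tableau, so that a weighting of $\Gamma_0$ and a filling of the interior nodes of the tableau by real numbers contain, after a triangular change of variables, the same data. A conceptually convenient way to organize this is through Gelfand--Tsetlin patterns as an intermediary: the array of tropical minors $M_{I,J}(\Gamma_0; w)$, computed in the semiring $\T$, is a Gelfand--Tsetlin-type pattern (this tropicalizes a Gelfand--Tsetlin parametrization of triangular totally nonnegative matrices), and a hive decomposes into a pair of such patterns.

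For $\conekt\subseteq\ctrop$, starting from a hive $l=(l^k_i)\in\Cone_3$, I would produce weightings $w_1,w_2\in W(\Gamma_0,\T)$ from the two patterns carried by $l$ (equivalently, by assigning to the edges of the two copies of $\Gamma_0$ suitable differences $l^k_i-l^{k'}_{i'}$ of entries at adjacent nodes). The crux is then the identities $m^\T_k(w_1)=a_k$, $m^\T_k(w_2)=b_k$ and --- using that concatenation of networks corresponds to max-plus matrix multiplication via \eqref{concat}, so that $m^\T_k(w_1\circ w_2)$ is a maximum over $k$-paths in $\Gamma_0\circ\Gamma_0$ --- $m^\T_k(w_1\circ w_2)=c_k$, where $(a,b,c)=\partial(l)$. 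Each of these identities asserts that a distinguished ``staircase'' $k$-path realizes the maximum in \eqref{tropm}; to see this one connects an arbitrary competing $k$-path to the staircase path by elementary uncrossing moves, observing that each move changes the total weight by exactly one of the three linear forms appearing in the hive inequalities \eqref{hiveineq}, which are nonnegative by hypothesis.

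For the reverse inclusion $\ctrop\subseteq\conekt$, given $(w_1,w_2)\in W(\Gamma_0,\T)^{\times2}$ with $m^\T(w_1)=r$, $m^\T(w_2)=s$, $m^\T(w_1\circ w_2)=t$, I would define a tableau $l=(l^k_i)$ whose three boundary strings reproduce $(r,s,t)$ under $\partial$ and whose interior entry $l^k_i$ is the maximal weight of an appropriate $k$-path in $\Gamma_0\circ\Gamma_0$ attached to the node $(i,k)$; then $\partial(l)=(r,s,t)$ by construction, and it remains to check $l\in\Cone_3$. Each rhombus inequality \eqref{hiveineq} would follow from a ``reroute and forget optimality'' argument: from optimal vertex-disjoint path families realizing the two quantities on its left-hand side one splices, by uncrossing, vertex-disjoint path families realizing the two quantities on its right-hand side with the same total weight, and since the latter families need not be optimal one obtains the stated inequality. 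This is a tropical, several-path incarnation of the Lindstr\"om--Gessel--Viennot lemma, and the same toolkit underlies the forward direction.

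The main obstacle is the combinatorial bookkeeping: matching the three families of inequalities \eqref{hiveineq} with three types of elementary path exchanges, and --- the delicate part --- upgrading the local rhombus inequalities to the global statement that the staircase multi-path is optimal. The latter requires connecting an arbitrary $k$-path to the staircase path by a monotone sequence of uncrossing moves and telescoping the associated inequalities, arranging that all of them point the same way. A further complication is the bilayer structure of the problem: one must verify that cutting a $k$-path of $\Gamma_0\circ\Gamma_0$ along the gluing line is compatible both with the Gelfand--Tsetlin/hive data on each half and with the max-plus product formula for $m^\T(w_1\circ w_2)$ supplied by \eqref{concat}.
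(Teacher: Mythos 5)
The paper does not actually prove Theorem \ref{t=kt} here: it is imported from the companion paper \cite{apsz}, and the strategy used there is essentially the one you outline --- an explicit dictionary between hives and pairs of tropical weightings of $\Gamma_0$, with the rhombus inequalities for the tableau of tropical minors of $\Gamma_0\circ\Gamma_0$ established by path-exchange (tropical Lindstr\"om--Gessel--Viennot) arguments, and the inclusion $\conekt\subseteq\ctrop$ supplied by an explicit section of $L_\T$ of the kind recorded in Proposition \ref{ourcone}. Your plan is therefore the intended route; what remains is precisely the combinatorial bookkeeping you flag (pinning down the hive entries as maxima over the correct multi-path families and checking all three rhombus types), which is carried out in \cite{apsz} rather than in this paper.
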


The main result of this paper is as follows:
\begin{theorem}
$\ctrop = \coneb$.
\end{theorem}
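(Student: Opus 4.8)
\emph{Proof plan.} The plan is to prove the two inclusions separately. The inclusion $\ctrop\subseteq\coneb$ should follow from a dequantization (Maslov limit): one promotes a tropical weighting of $\Gamma_0$ to a one-parameter family of complex weightings and recovers the functions $m^\T_k$ as the leading exponential order of the singular values. The reverse inclusion $\coneb\subseteq\ctrop$ is the substantial one; I would obtain it by comparing the Liouville volume of a Poisson--Lie reduced space with the Euclidean volume of the corresponding face of $\ctrop$, an argument which along the way yields the tropical description of the Duistermaat--Heckman measure announced in the abstract. Combined with Theorem~\ref{t=kt}, this would give an independent route to $\coneho=\conekt$ that invokes neither Theorem~\ref{knutao} nor Theorem~\ref{klyachko}; in particular the polyhedrality and cone structure of $\coneb$ would be an output of the argument rather than an input.

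For $\ctrop\subseteq\coneb$ I work slicewise. Fix admissible spectra $a,b$ (i.e.\ in the image of $l$) and let $c\in\ctrop_{a,b}:=\{c:(a,b,c)\in\ctrop\}$, realized by $w_1,w_2\in W(\Gamma_0,\T)$ with $m^\T(w_1)=a$, $m^\T(w_2)=b$, $m^\T(w_1\circ w_2)=c$. For $\hbar>0$ put $w_j^\hbar(e)=\exp(w_j(e)/\hbar)$ and $A_\hbar=M(\Gamma_0;w_1^\hbar)$, $C_\hbar=M(\Gamma_0;w_2^\hbar)$; these are upper-triangular with real positive entries and positive diagonal (the straight path is always present), hence lie in $\B$, and by \eqref{concat} one has $A_\hbar C_\hbar=M(\Gamma_0\circ\Gamma_0;w_1^\hbar\circ w_2^\hbar)$. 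Since $\Gamma_0$ and $\Gamma_0\circ\Gamma_0$ are planar, the Lindstr\"om lemma writes every $k\times k$ minor as a subtraction-free sum $\sum_\alpha\exp(w(\alpha)/\hbar)$ over $k$-paths, so no cancellation occurs and $\hbar\log M_{I,J}=\max_\alpha w(\alpha)+O(\hbar)$. As $\prod_{i\le k}\lambda_i(A_\hbar A_\hbar^*)$ is the square of the largest singular value of the $k$-th compound matrix, and the largest entry of a matrix controls its operator norm up to a dimensional constant, this gives $\hbar\,\lbb_k(A_\hbar)\to m^\T_k(w_1)$ as $\hbar\to 0^+$, and similarly for $C_\hbar$ and $A_\hbar C_\hbar$. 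Thus $\hbar\bigl(\lbb(A_\hbar),\lbb(C_\hbar),\lbb(A_\hbar C_\hbar)\bigr)\to(a,b,c)$, while each term lies in $\coneb$. Since $A\mapsto\lbb(A)$ is a submersion onto the cone of admissible spectra, a small correction of $(A_\hbar,C_\hbar)$ makes $\lbb(A_\hbar)$ and $\lbb(C_\hbar)$ equal to $\hbar^{-1}a$ and $\hbar^{-1}b$ exactly at the cost of an $o(1)$ change in $\hbar\,\lbb(A_\hbar C_\hbar)$; rescaling and using that $\coneb$ is closed (which is elementary, the relevant matrices being bounded) then gives $c\in\coneb_{a,b}$. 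In particular $\coneb_{a,b}$ is nonempty whenever $\ctrop_{a,b}$ is.

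For $\coneb\subseteq\ctrop$, keep $a,b$ fixed. By Theorem~\ref{t=kt}, $\ctrop=\conekt$ is a closed convex polyhedral cone, so $\ctrop_{a,b}$ is a polytope carrying, in its full-dimensional locus, a ``tropical Duistermaat--Heckman measure'': the pushforward of Lebesgue measure along $w\mapsto m^\T(w_1\circ w)$ over $\{w\in W(\Gamma_0,\T):m^\T(w)=b\}$ with $w_1$ fixed by $m^\T(w_1)=a$, positive on the interior of $\ctrop_{a,b}$. On the symplectic side, $\B\simeq\Un^*$ is the Poisson--Lie dual group with dressing-orbit moment map $\lbb$; imposing $\lbb(A)=a$, $\lbb(C)=b$ on $(A,C)\in\B^{\times2}$ and taking $\lbb(AC)$ as moment map produces a (quasi-Hamiltonian) reduced space with moment image $\coneb_{a,b}$, convex by the Poisson--Lie convexity theorem and carrying a Duistermaat--Heckman measure $\Dh_{a,b}$ positive on its interior. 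Using the planar-network parametrisation $M(\Gamma_0;\cdot):W(\Gamma_0,\C^*)\to\B$ one identifies the reduced two-form with the standard log-canonical (``cluster'') form $\sum_e d\log|w_e|\wedge d\arg w_e$ up to an explicit constant, and then computes the Liouville volume via the substitution $u_e=\hbar\log|w_e|$: the angular integration yields a constant, the radial part becomes Lebesgue measure in $u$, and the constraints together with $\lbb(AC)$ degenerate, as $\hbar\to 0$, to their tropical counterparts. This should give $\vol(\coneb_{a,b})\le\vol(\ctrop_{a,b})$, in fact equality together with agreement of $\Dh_{a,b}$ with the tropical measure up to normalization --- which is the Corollary. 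Combined with $\ctrop_{a,b}\subseteq\coneb_{a,b}$ from the first step, the positivity of $\Dh_{a,b}$ on the interior of $\coneb_{a,b}$, and convexity of both polytopes, equal volumes force $\coneb_{a,b}=\ctrop_{a,b}$ for every admissible $a,b$, that is, $\coneb=\ctrop$.

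The hard part will be the Liouville volume estimate: controlling the $\hbar\to 0$ asymptotics of the symplectic volume of the reduced space and identifying the limit with the Euclidean volume of the tropical polytope. This needs the subtraction-free minor analysis of the first step performed uniformly in the parameters, a careful treatment of the boundary strata where optimal paths collide (and the tropical map ceases to be smooth) as well as of the lower-dimensional faces of the polytope, and an honest identification of the reduced symplectic form with the cluster two-form on $W(\Gamma_0,\C^*)$. A further subtlety, already present in the first step, is making the dequantizing family land in the prescribed slice $\{(a,b,\cdot)\}$ rather than merely in the generated cone; this is what the submersion/correction argument above is for.
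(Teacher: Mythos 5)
Your first inclusion, $\ctrop\subseteq\coneb$, is essentially the paper's argument (Proposition~\ref{limit} and Corollary~\ref{tropinr}/\ref{hornconecor}): the dequantization $w\mapsto e^{w/\hbar}$ with $\hbar=1/\tau$, the subtraction-free Lindstr\"om expansion of the minors, and the comparison of $\prod_{j\le k}\lambda_j$ with the dominant minor are exactly what is done there. One small difference: the paper closes this step by invoking Klyachko's theorem to know that $\coneb$ is a closed cone, whereas you argue closedness directly from compactness of the fibers of $\lbb$; that is correct and slightly more self-contained. Your worry about landing in the exact slice $(a,b,\cdot)$ is legitimate, and your submersion/perturbation fix is workable away from coincident singular values, with the remaining cases handled by approximation and closedness.

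The reverse inclusion is where your proposal diverges from the paper and where it has a genuine gap. You propose to compute the Liouville volume of the Poisson--Lie reduced space by identifying the reduced symplectic form with a log-canonical form on $W(\Gamma_0,\C^*)$ and controlling its $\hbar\to0$ asymptotics; you correctly flag this as the hard part, but it is asserted, not proved, and it is precisely the step that carries the content of the theorem. The paper avoids any such volume computation. Its mechanism is: (i) the Gelfand--Zeitlin systems on $\H_r$ and on $\B_r$ are completely integrable with the \emph{same} action variables, so $L_\H$ and $L_\B$ both push the Liouville measure to Lebesgue measure on the polytope $P_r$ --- this, not a cluster-form identity, is what converts symplectic measure into the Lebesgue measure appearing in the tropical picture; (ii) the Duistermaat--Heckman measures of the additive and multiplicative problems coincide, $\mu^\B_{r,s}=\mu_{r,s}$ (Theorem~\ref{measure}), proved in the Appendix via the linearization lemma and ``linearization commutes with products,'' again with no explicit volume computed; (iii) Proposition~\ref{onemprop}, a connectedness argument showing that for large $\tau$ every generic fiber of $L_\B$ is entirely contained in the image of the network parametrization $M(\Gamma_0;\tau w,\phi)$ --- without this, the tropical estimate controls only part of $\B_r\times\B_s$ and one cannot bound the measure of the exceptional set; and (iv) the DH theorem's positivity on the interior of $\Pi_{r,s}$ to upgrade ``measure zero'' to ``empty'' (Theorem~\ref{empty}). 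Steps (i)--(iii) are the ideas missing from your plan; in particular, without an analogue of (iii) your volume comparison cannot even get started, since you would not know that the reduced space is exhausted by the chart on which your coordinate computation takes place. Note also that the paper does use Klyachko's theorem (to identify $\Pi_{r,s}$ for the two problems and to get closedness of $\coneb$), so your claim that the argument is independent of Theorem~\ref{klyachko} would only be vindicated if the missing volume estimate were actually carried out.
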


In combination with Klyachko's theorem (Theorem \ref{klyachko}), this result implies $\ctrop=\coneho$.
Together with Theorem \ref{t=kt}, this gives a new proof
of the Knutson--Tao theorem (Theorem \ref{knutao}).

\subsection{The structure of the paper.}

Our purpose in Section \ref{sec:tropical} is to study the relation
between $\ctrop$ and $\coneb$ via the correspondence map
\eqref{correspondence}.  First, in Proposition \ref{limit}, we show
that away from a small set of tropical weights, tropical singular
values approximate the corresponding ordinary singular values
exponentially well.

A refinement of this statement is Proposition \ref{onemprop}, where we
show that this approximation is valid on a large part of $\B$, where
we measure size in terms of the image with respect to the
Gelfand--Zeitlin map.

The main result of the section is Proposition \ref{proph}.
It states that the singular values of the matrices
$A,\,C$, and $AC$, for $(A,C)\in\B\times\B$, are exponentially close 
the corresponding tropical singular values except for a small 
part of $\B \times \B$. This is sufficient to prove the inclusion
$\ctrop\subset\coneb$.

There is a canonical Poisson structure on the space of Hermitian matrices
$\H$, whose symplectic leaves $\H_r$ are
Hermitian matrices with fixed eigenvalues $r\in\R^n$, and
it induces a Liouville measure $\mu_r$ on $\H_r$. Similarly, there is a canonical
Poisson structure on the group $\B$ with symplectic leaves
the upper-triangular matrices with fixed singular values
$\exp(r)$. The corresponding Liouville measure is denoted $\mu_r^\B$.

In Section \ref{poissondh}, we first recall the fact that these measures are
compatible with the corresponding Gelfand--Zeitlin maps (Theorem \ref{integrable} and
Section \ref{gzonb}) in the sense that the pushforwards of the measures of $\mu_r$
and $\mu_r^\B$ onto the Gelfand--Zeiltin polytope are equal to the
Lebesgue measure.  This is a corollary of the complete integrability
of the Gelfand--Zeitlin system.

According to Klyachko's theorem, the images of the map $(K,L)\to
l(K+L)$ on $\H_r\times\H_s$, and the map $(A,C)\to l_\B(AC)$ on $\B_r
\times \B_s$ coincide. This image is a polytope, that we denote by
$\Pi_{r,s}\subset\R^n$. Theorem \ref{measure} is a refinement of this theorem: it
states that the pushforward measures on $\Pi_{r,s}\subset\R^n$ of the
measures $\mu_r \times \mu_s$ and $\mu_r^\B \times \mu_s^\B$ along
these maps coincide; we denote this measure by $\mu_{r,s}$. The proof
of this theorem (given in Appendix) uses the theory of Poisson--Lie
groups.

Combining these two pieces of information about pushforward
measures with our tropical analysis from Section \ref{sec:tropical}, we present our
final argument in the proof of Theorem \ref{measurezero}. Here we consider the
hypothetical exceptional part of $\Pi_{r,s}$ which does not lie in
$\ctrop$, and prove that the measure of this part is zero.

Using  standard arguments from symplectic geometry, this quickly leads
to the proof of our main theorem (Theorem \ref{empty}). We conclude the paper with an interesting
corollary: we provide a tropical description of $\mu_{r,s}$ in Theorem~\ref{tropmeas}.

\vskip 0.2cm

{\bf Acknowledgements.} We are indebted to A. Knutson for posing us a
question of giving a symplectic proof of Horn inequalities. We are
grateful to S. Fomin, E. Meinrenken and C. Woodward for inspiring
discussions. 

The work of A.A. was supported in part by the grant 152812 of the
Swiss National Science Foundation and by the grant MODFLAT of the
European Research Council (ERC).  The works of A.A. and A.S. was
supported in part by the grant 141329 of the Swiss National Science
Foundation.  The work of M.P. was supported in part by the Chebyshev
Laboratory under RF Government grant 11.G34.31.0026, JSC ``Gazprom
Neft'', and RFBR grant 13-01-00935.  Our work was supported in part by
the NCCR SwissMAP of the Swiss National Science Foundation.

\section*{Index of notations}

\bgroup
\def\arraystretch{1.5}
\begin{tabular}{|l| p{0.6\textwidth}|l|}
  \hline
  $\H$ & the set of Hermitian matrices &  \\
  \hline
  $\B$ & the set of upper-triangular matrices with positive diagonal entries & \\ 
  \hline
  $\Cone_{\mathrm{Horn}}$ & Horn cone & Equation \eqref{horncone} \\
  \hline
  $\Cone_{\mathrm{KT}}$ & Knutson--Tao cone defined in terms of hives& Inequalities \eqref{hiveineq}\\ 
  \hline
  $\coneb$ & the multiplicative Horn cone& Equation \eqref{coneb}\\
  \hline
  $\ctropfull$ & the tropical Horn cone & Equation \eqref{tropcone}\\
  \hline
  $P_k\Gamma(I,J)$ & the set of $k$-paths in $\Gamma$ connecting
  sources with labels from $I$ and sinks with labels from $J$  & Section \ref{planar} \\
  \hline
  $W(\Gamma,Q)$ & weightings of network $\Gamma$ with values in the semigroup $Q$ & Section \ref{planar} \\
  \hline
  $\overline W(\Gamma,Q)$ & subset of $W(\Gamma, Q)$ with vanishing
  weightings on all horizontal edges except those adjacent to sinks &
  Section \ref{tropgz} \\ 
  \hline
  $\Phi(\Gamma)$ & weightings of network $\Gamma$ with values in $U(1)$ & Example \ref{phases} \\
  \hline
  $\overline\Phi(\Gamma)$ & weightings from  $\Phi(\Gamma)$ with
  all horizontal edges having weight equal to $1$& Proposition
  \ref{onemprop} \\ 
  \hline
  $l(K)$ &  vector of sums of eigenvalues of $K$ & Section \ref{hornintro} \\
  \hline
  $l^{\B}(A)$ &  multiplicative analog of $l$ & Equation \eqref{notationl} \\
  \hline
  $M(\Gamma;w)$ &  correspondence map & Equation \eqref{correspondence} \\
  \hline
  $m_k^{\T}(\Gamma,w)$ &  maximal value of the weighting $w$ on  $k$-paths  in
  $\Gamma$ & Equation \eqref{tropm} \\
  \hline
  $L_{\mathcal H}$ & the Gelfand-Zeitlin map $\H \to
  \mathbb{R}^\nabla$ recording the values of $l$ on principal submatrices & Section \ref{gzsie} \\
  \hline
  $L_{\B}$ & the map  $\B \to \mathbb{R}^\nabla$ with components given by values of $l^{\B}$ on principal submatrices & Section \ref{preimgz} \\
  \hline
  $L_{\T}$ & the map $W(\Gamma, \T)\to \T^{\nabla}$ with components defined by values of $m^{\T}\Gamma$ on subnetworks & Section \ref{tropgz} \\
  \hline
  $W_\delta(\Gamma,\mathbb R)$ & subset of points of $W(\Gamma,
  \mathbb R)$ with distance $>\delta$ from certain critical hyperplanes & Section \ref{tropestimate} \\
  \hline
  $\Delta_{\mathrm {GZ}}$ & Gelfand--Zeitlin cone defined by interlacing inequalities & Inequalities \eqref{eq:GZ} \\
  \hline
  $\Delta_{0}$ & cone isomorphic to $\Delta_{\mathrm{GZ}}$ via $L_{\T}$ & Proposition \ref{ourcone} \\
  \hline
\end{tabular}
\egroup

\section{The tropical analysis}  \label{sec:tropical}

In this section, we establish tropical approximation estimates for singular values of matrices defined by planar networks. These estimates imply the inclusion of cones $\ctrop \subset \coneb$.

\subsection{Preliminaries}
We begin by recalling some standard facts about interlacing
inequalities and Gelfand--Zeiltin completely integrable systems.

\subsubsection{The Gelfand--Zeitlin system and interlacing
  inequalities}
\label{gzsie}
For a given~$n$, let $S^\nabla$ be the set of maps from the vertices
of the triangular tableau of size $n$ to the set $S$. For instance,
$\R^\nabla$ is the set of triangular tableaux of size $n$ filled with
real numbers $l^k_i$ for $0 \leq i \leq k \leq n$.

We define the {\em Gelfand--Zeitlin} map $L_\H:\H\to\R^\nabla$ as follows. For $A\in\H$, we
assign $l_i(A^{(k)})$ to the $i$th node of the $k$th row for $i>0$, where
$A^{(k)}$ is the principal $k$-by-$k$ submatrix of $A$; we also set
the first element in each row to zero.

The basic result is that, for any $A\in\H$, the tableau $L_\H(A)$ lies
in the {\em Gelfand--Zeitlin} cone $\conegz\subset\R^\nabla$ defined by
the system
\begin{equation}  \label{eq:GZ}
\begin{array}{lll}
l^k_0 =0  & \text{for all }k,&\\
l^{k+1}_{i} + l^{k}_{i-1} & \geq & l^{k+1}_{i-1} + l^{k}_{i},  \\
l^{k+1}_{i} + l^{k}_{i} & \geq & l^{k+1}_{i+1} + l^{k}_{i-1}.
\end{array}
\end{equation}

These inequlities are also called the  {\em interlacing inequalities},
since the numbers $\lambda^{k}_i = l^{k}_i-l^{k}_{i-1}$ (corresponding
to the eigenvalues of Hermitian matrices and their principal
submatrices) satisfy the inequalities
$$
\lambda^{k}_i \geq \lambda^{k-1}_i \geq \lambda^k_{i+1}.
$$

\begin{rem}
  Note that, somewhat surprisingly, the inequalities \eqref{eq:GZ} are
  part of the Knutson--Tao inequalities \eqref{hiveineq}.
\end{rem}

\begin{ex}
For the case of $n=2$, the interlacing inequalities read as follows: 
$$l^2_1 \geq l^1_1\quad\text{ and }\quad l^2_1 +l^1_1 \geq l^2_2.$$
\end{ex}

\subsubsection{Tropical Gelfand--Zeitlin map}\label{tropgz}
Let $\Gamma$ be a planar network of rank~$n$ and let $\AA=\T$ the
tropical semiring.  We denote by $\Gamma^{(k)}$ the maximal subgraph
of $\Gamma$ that does not contain the sinks or sources with
$y$-coordinates above the line $\{y=k\}$. A weighting $w \in W(\Gamma,
\T)$ induces weightings on~$\Gamma^{(k)}$ for all $k$, which, by abuse
of notations, we also denote by $w$.  For each $k=1, \dots, n$,
consider the collection of functions $m^k_i(w)=m^\T_i(\Gamma^{(k)},w),$ for
$i=1, \dots, k$, and place the corresponding values in the $k$-th row
of the triangular tableau (see Figure \ref{hive});
We will call the resulting map $ L_{\T}: W(\Gamma, \T)\to \T^{\nabla}$
the {\em tropical Gelfand--Zeitlin map}.  

\begin{theorem}[\cite{apsz} Theorem 2] \label{mainold} For any planar
  network $\Gamma$ of rank $n$ and any weighting $w \in W(\Gamma,
  \T)$, the components $m^\T_i(\Gamma^{(k)}, w)$ satisfy the
  interlacing inequalities \eqref{eq:GZ}.
\end{theorem}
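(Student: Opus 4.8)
My plan is to reduce the three families of interlacing inequalities \eqref{eq:GZ} to a single combinatorial reorganization statement about multi-paths. Write $m^k_i := m^\T_i(\Gamma^{(k)},w)$, which by \eqref{tropm} is the maximal $w$-weight of a family in $P_i\Gamma^{(k)}$. The relation $m^k_0 = 0$ holds for every $k$ because $P_0\Gamma^{(k)}$ consists of the single empty path, of weight the tropical unit $0$. For the remaining two inequalities, observe that $\Gamma^{(k)}$ is obtained from $\Gamma^{(k+1)}$ simply by deleting the source and the sink of level $k+1$ together with their incident edges; hence every multi-path of $\Gamma^{(k)}$ is a multi-path of $\Gamma^{(k+1)}$, and, conversely, a family of paths in $\Gamma^{(k+1)}$ all of whose source and sink labels are $\le k$ automatically lies in $\Gamma^{(k)}$ — a path can pass neither through a source (in-degree $0$) nor through a sink (out-degree $0$), nor along an edge incident to one.

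The central lemma I would establish is: \emph{if $P$ is a vertex-disjoint family of $p$ paths in $\Gamma^{(k+1)}$ and $Q$ a vertex-disjoint family of $q$ paths in $\Gamma^{(k)}$, then the edge-multiset $P\sqcup Q$ admits a repartition $P\sqcup Q = P'\sqcup Q'$ with $P'$, $Q'$ vertex-disjoint families of $\lceil(p+q)/2\rceil$ and $\lfloor(p+q)/2\rfloor$ paths and $Q'\subseteq\Gamma^{(k)}$.} Since $P'$ and $Q'$ reuse exactly the edges of $P\sqcup Q$, we get $w(P')+w(Q') = w(P)+w(Q)$. Granting the lemma, choose $P$ optimal in $P_{i-1}\Gamma^{(k+1)}$ and $Q$ optimal in $P_i\Gamma^{(k)}$; then $p+q=2i-1$, the lemma delivers $P'\in P_i\Gamma^{(k+1)}$ and $Q'\in P_{i-1}\Gamma^{(k)}$, and $m^{k+1}_i + m^k_{i-1} \ge w(P')+w(Q') = m^{k+1}_{i-1}+m^k_i$, which is the second inequality of \eqref{eq:GZ}. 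Choosing instead $P$ optimal in $P_{i+1}\Gamma^{(k+1)}$ and $Q$ optimal in $P_{i-1}\Gamma^{(k)}$ gives $p+q=2i$, $P'\in P_i\Gamma^{(k+1)}$, $Q'\in P_i\Gamma^{(k)}$, hence $m^{k+1}_i+m^k_i \ge m^{k+1}_{i+1}+m^k_{i-1}$, the third inequality of \eqref{eq:GZ}.

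To prove the lemma, set $\CE := P\sqcup Q$. Since every edge of $\Gamma$ is $x$-monotone, $\CE$ is acyclic, and since $P$ and $Q$ are each vertex-disjoint, every vertex of $\CE$ has in- and out-degree at most $2$. Following the source/sink excesses of $\CE$ one extracts paths greedily ``from the bottom'': begin at the lowest available source, at each step follow the lowest available out-edge (counted with multiplicity), and continue until a sink is reached — this is possible because $\CE$ is acyclic and in- and out-degrees balance at interior vertices; then delete this path and repeat. One obtains a decomposition $\CE=\gamma_1\sqcup\cdots\sqcup\gamma_{p+q}$ into pairwise non-crossing source-to-sink paths, ordered by height, so that the topmost path $\gamma_{p+q}$ uses the highest source and the highest sink occurring in $P\sqcup Q$. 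Because at most two of the $\gamma_j$ meet at any vertex, a layering argument shows that paths with non-consecutive indices are vertex-disjoint: if $\gamma_i$ and $\gamma_j$ shared a vertex $v$ with $j\ge i+2$, then $\gamma_{i+1}$, squeezed between them in height over $v$, would also pass through $v$. Consequently the odd-indexed and the even-indexed $\gamma_j$ each form a vertex-disjoint family, of sizes $\lceil(p+q)/2\rceil$ and $\lfloor(p+q)/2\rfloor$. Finally, $P\sqcup Q$ contains at most one source label and at most one sink label exceeding $k$ — the value $k+1$, possible only through $P$ — and these, being maximal, are carried by $\gamma_{p+q}$; hence the colour class omitting $\gamma_{p+q}$ uses only labels $\le k$ and therefore lies in $\Gamma^{(k)}$. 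Take this class to be $Q'$ and the other to be $P'$; in each application above the parity of $p+q$ makes their sizes match the ones required.

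The main obstacle is the planar-geometry core of the lemma: showing that the greedy lowest-path procedure is well defined, terminates at sinks, yields a non-crossing family with the stated height order, and justifying the layering step — all of which demands care with the rotation order of edges at each vertex and with edge multiplicities in $\CE$; this is precisely the analysis carried out in \cite{apsz}. Two features of the argument are worth noting: it is purely combinatorial, so it is unchanged when some weights equal $-\infty$ (the reorganized families reuse the same edges, hence have the same, possibly infinite, weight); and it uses nothing about $\Gamma$ beyond its being a planar network of rank $n$, which is why Theorem~\ref{mainold} is stated at that level of generality.
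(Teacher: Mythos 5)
This theorem is not proved in the paper at all --- it is quoted verbatim from \cite{apsz} --- and your path-exchange argument is essentially the proof given there: reduce each rhombus inequality to an uncrossing/repartition lemma for the edge-multiset union of two vertex-disjoint path families, take alternate layers of the resulting non-crossing decomposition, and observe that the level-$(k+1)$ source and sink (each of multiplicity at most one, since $Q$ lives in $\Gamma^{(k)}$) can only be carried by the topmost layer. Your reduction and parity bookkeeping are correct, and the one step you defer to \cite{apsz} --- well-definedness and the layering property of the non-crossing decomposition --- is precisely the technical core of that reference, so nothing is genuinely different or missing.
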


Let $\Gamma_0$ be the planar network shown in Figure \ref{net:triang}.
Consider the subset $\overline{W}(\Gamma_0, \T) \subset W(\Gamma_0,
\T)$ which consists of weightings $w$ vanishing on all horizontal
edges with the exception of those which end on a sink. Note that the
number of edges carrying non-vanishing weights is then exactly
$N=n(n+1)/2$, which coincides with the number of entries in the
triangular tableaux and with the number of functions $m^k_i$. The
following result is proved in \cite{apsz}:
\begin{proposition} \label{ourcone} There exists a cone $\conez\subset
  \overline{W}(\Gamma_0, \T)$ such that the restriction of $L_\T$ to
  $\conez$ is an isomorphism to $\conegz$.
\end{proposition}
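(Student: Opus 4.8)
The plan is to construct the cone $\conez$ explicitly by writing down, for the network $\Gamma_0$, which $k$-paths realize the maxima defining the tropical Gelfand--Zeitlin functions $m^k_i$, and then invert the resulting piecewise-linear map on a suitable full-dimensional chamber. Concretely, the weighting $w\in\overline W(\Gamma_0,\T)$ has exactly $N=n(n+1)/2$ non-vanishing coordinates: the slanted edges together with the horizontal edges landing on sinks. I would label these free weights by the nodes of the triangular tableau so that the $k$-th row weight positions mirror the positions of the functions $m^k_i$, and then observe that the subnetwork $\Gamma_0^{(k)}$ is again, after relabelling, a copy of $\Gamma_0$ at rank $k$. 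This reduces everything to understanding, for a single such staircase network, which collection of vertex-disjoint paths from $k$ sources to $k$ sinks has maximal total weight.

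The key computation is then the following: for the network $\Gamma_0^{(k)}$, the unique $k$-path uses all $k$ sources and all $k$ sinks, and its weight is (up to the horizontal contributions that we have arranged to vanish) the sum of a triangular block of the slanted weights. First I would show that in a suitable open chamber $\conez^\circ\subset\overline W(\Gamma_0,\T)$ — defined by declaring certain sign conditions on differences of weights, e.g. that the "diagonal" weights dominate — the maximum in \eqref{tropm} is attained at one fixed combinatorial $k$-path for every $k$ and every index set $(I,J)$ that enters $m^k_i$; this makes $L_\T$ restricted to $\conez^\circ$ a genuine linear map on that chamber, given by an explicit triangular (hence invertible over $\mathbb Z$) matrix summing appropriate weights. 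Then I would check directly that this linear map sends $\conez^\circ$ onto the interior of $\conegz$: the image inequalities are precisely the interlacing inequalities \eqref{eq:GZ}, which on the weight side become the sign conditions defining $\conez^\circ$. Taking closures gives the cone $\conez$ with $L_\T|_{\conez}$ a bijection onto $\conegz$; since both maps and their inverse are linear (piecewise-linear but linear on this chamber), it is an isomorphism of cones. By Theorem \ref{mainold} the image automatically lies in $\conegz$, so only surjectivity and injectivity onto $\conegz$ need genuine argument.

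The main obstacle I anticipate is verifying that on the chosen chamber $\conez$ the tropical maximum defining each $m^k_i$ is \emph{simultaneously} realized by a single path family — the functions $m^k_i$ are maxima over many multipaths and many $(I,J)$, and a priori different terms win in different sub-chambers, so $L_\T$ is only piecewise-linear. The art is to choose $\conez$ small enough that a consistent family of winning paths exists across all $k$ and $i$, yet large enough that its image is all of $\conegz$ and not a proper subcone. I would handle this by an induction on $n$ (equivalently on rows $k$), using that $\Gamma_0^{(k-1)}$ sits inside $\Gamma_0^{(k)}$ and that the interlacing inequalities relate row $k$ to row $k-1$ in exactly the way the nested networks relate; the inductive hypothesis supplies the winning path family for the lower rows, and one extends it by the obvious greedy path in the new row, the extension being optimal precisely because of the dominance conditions cut out in $\conez$. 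A dimension count ($\dim\overline W(\Gamma_0,\T)=\dim\conegz=N$) together with the explicit triangular change of coordinates then closes the argument.
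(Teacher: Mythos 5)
The paper does not actually prove this proposition: it is quoted from \cite{apsz}, so there is no in-paper argument to compare yours against. Your overall strategy --- exhibit a chamber of $\overline{W}(\Gamma_0,\T)$ on which every tropical maximum $m^k_i$ is achieved by one fixed multipath, so that $L_\T$ becomes a single invertible (triangular) linear map whose facet inequalities transform exactly into the interlacing inequalities --- is the right one, and it is essentially how the result is established in the cited reference. The dimension count $\dim\overline{W}(\Gamma_0,\T)=n(n+1)/2=\dim\conegz$ is also correctly identified as the reason an isomorphism is possible at all.

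That said, as written the proposal has a genuine gap: the entire content of the proposition is the existence of such a chamber together with a consistent family of winning paths, and you never exhibit either. You correctly name this as ``the main obstacle'' but resolve it only by fiat (``the dominance conditions cut out in $\conez$'', ``the obvious greedy path''). Without the explicit inequalities defining $\conez$ and the explicit winning $i$-path in each $\Gamma_0^{(k)}$, one cannot verify the triangularity of the resulting linear map, nor --- more importantly --- that its image is all of $\conegz$ rather than a proper subcone; Theorem \ref{mainold} only gives the containment $L_\T(\conez)\subset\conegz$, so surjectivity is where the real work lies. Moreover, the one concrete computation you offer is incorrect: in $\overline{W}(\Gamma_0,\T)$ the unique $k$-path of $\Gamma_0^{(k)}$ consists of the straight horizontal paths from source $i$ to sink $i$, so its weight is the sum of the $k$ sink-adjacent horizontal weights and involves no slanted edges at all; the slanted weights enter only the functions $m^k_i$ with $i<k$. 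Getting this bookkeeping right is precisely what produces the triangular structure of $L_\T|_{\conez}$, so this step needs to be redone carefully (or the reader referred to \cite{apsz}, as the paper itself does).
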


 Proposition \ref{ourcone}, with some abuse of notation, allows
    us to define the bijective inverse
    map, $L_\T^{-1}:\conegz\to\conez$.

\subsection{Tropical estimates for a single matrix}\label{tropestimate}

Let $\Gamma$ be a planar network of rank~$n$.  It will be convenient to work
with the subset
$W(\Gamma,\RR)\subset W(\Gamma,\T)$ of real weightings of $\Gamma$,
considering the reals $\RR=\T\setminus\{-\infty\}$ as a subset of the tropical numbers.
Naturally, the ``multiplication'' in this situation is the addition of
real numbers, thus, for example, for $w\in
W(\Gamma,\RR)$, we have
$$
w(\alpha) = \sum_{e \in \alpha} w(e).
$$

For $\delta>0$, denote by $W_\delta(\Gamma, \R)$ the subset
of weightings $w\in W(\Gamma, \R)$ such that
\begin{itemize}
\item  for any two distinct subsets $\alpha, \beta \subset E\Gamma$,
  we have 
  \begin{equation}
    \label{wcond1}
  |w(\alpha) - w(\beta)|>\delta.
  \end{equation}
  
\item  in all interlacing inequalities \eqref{eq:GZ}  for $m^k_i(w)$, the left hand side is greater than the right
hand side by at least $\delta$  (cf. Theorem \ref{mainold}):
\begin{equation}  \label{wcond2}
\begin{array}{lll}
l^k_0 =0 & \text{for all }k,&\\
l^{k+1}_{i} + l^{k}_{i-1} & > & l^{k+1}_{i-1} + l^{k}_{i} +\delta,  \\
l^{k+1}_{i} + l^{k}_{i} & > & l^{k+1}_{i+1} + l^{k}_{i-1} +\delta.
\end{array}
\end{equation}
 \end{itemize}

 Note that the second condition implies, in particular, that we have
 the gap inequality
$ \lambda_i^\T(\Gamma^{(k)}) - \lambda^\T_{i+1}(\Gamma^{(k)}) >
\delta$ for the tropical eigenvalues of the principal subnetworks of $\Gamma$.

 The complement to the set $W_\delta(\Gamma, \R)$ is contained in the
 $\delta$-neighborhood of a finite number of hyperplanes defined by
 the equations $w(\alpha)=w(\beta)$ and by the equations resulting
 from the interlacing inequalities. 

Recall the definition of the correspondence map \eqref{correspondence}.
The tropical approximation estimate is described by the following proposition:
\begin{proposition}\label{limit}
  Let $\Gamma$ be a planar network of rank $n$, fix
  $\delta>0$, and let $w \in W_\delta(\Gamma, \R)$.
   Then there is a constant $c$ depending only on $\Gamma$,
 such that for $\tau \geq 1$ and for any $\phi\in \Phi(\Gamma)=W(\Gamma, U(1))$, the inequalities
  \begin{equation}
    \label{estlambda}
\left|\frac1\tau\,
   l^\B_i(M(\Gamma;\tau w,\phi))-m^\T_i(\Gamma, w)\right|<
c \cdot e^{-\tau\delta},\quad  i=1,\dots,n,
  \end{equation}
hold.
\end{proposition}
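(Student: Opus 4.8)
The plan is to compare the matrix $M(\Gamma;\tau w,\phi)$ directly with its "dominant" tropical part and track the error in the singular values. First I would fix $k$ and recall that, by the Lindström Lemma, the $k\times k$ minors of $M(\Gamma;\tau w,\phi)$ are given by $M_{I,J}(\Gamma;\tau w,\phi)=\sum_{\alpha\in P_k\Gamma(I,J)}\exp(\tau w(\alpha))\phi(\alpha)$, and hence the sum $m_k^{\C}(M)=\sum_{|I|=|J|=k}M_{I,J}$ is a sum of terms of the form $e^{\tau w(\alpha)}\phi(\alpha)$ over all $k$-paths $\alpha\in P_k\Gamma$, with unit-modulus coefficients $\phi(\alpha)$. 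The key combinatorial input is that the weights $w(\alpha)$ of distinct edge-subsets differ by more than $\delta$ (condition \eqref{wcond1}), so among all $k$-paths there is a unique maximizer $\alpha_k^\ast$ achieving $w(\alpha_k^\ast)=m_k^\T(\Gamma,w)$, and every other $k$-path has weight at most $m_k^\T(\Gamma,w)-\delta$. Therefore
\begin{equation*}
\bigl|e^{-\tau m_k^\T(w)}\,m_k^{\C}(M)-\phi(\alpha_k^\ast)\bigr|\le (\#P_k\Gamma)\,e^{-\tau\delta},
\end{equation*}
so $e^{-\tau m_k^\T(w)}m_k^{\C}(M)$ lies within $C_1 e^{-\tau\delta}$ of the unit circle, where $C_1$ depends only on $\Gamma$.

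Next I would relate $m_k^{\C}(M)$ to the product of the top $k$ singular values. Here I would use the standard identity that for any matrix $B$, $\prod_{j=1}^k\lambda_j(BB^\ast)^{1/2}=\max_{|I|=|J|=k}|B_{I,J}|$ is comparable to $m_k^{\C}(B)=\sum_{|I|=|J|=k}|B_{I,J}|$ up to a dimension-dependent constant (both are norms on $\Lambda^k$, or one can argue via the singular value decomposition). Actually the cleaner route is: the $k$-th compound matrix $\Lambda^k B$ has entries the $k\times k$ minors $B_{I,J}$, its operator norm equals $\prod_{j=1}^k\sigma_j(B)$ where $\sigma_j=\lambda_j(BB^\ast)^{1/2}$, and its entrywise $\ell^1$-norm is exactly $m_k^{\C}(B)$; since all matrix norms on the $\binom nk^2$-dimensional space are equivalent with constants depending only on $n$, we get
\begin{equation*}
c_n^{-1}\,m_k^{\C}(B)\le \prod_{j=1}^k\sigma_j(B)\le c_n\,m_k^{\C}(B).
\end{equation*}
Combining with the previous step and using $m_k^{\T}(w)=l_k^\B(M)$-analog, namely $\sum_{j\le k}\tfrac12\log\lambda_j = l_k^\B$, I obtain
\begin{equation*}
\Bigl|\,l_k^\B(M(\Gamma;\tau w,\phi))-\tau\,m_k^\T(\Gamma,w)\,\Bigr|=\Bigl|\log\!\bigl(e^{-\tau m_k^\T(w)}\textstyle\prod_{j\le k}\sigma_j\bigr)\Bigr|\le \log\!\bigl(c_n(1+C_1 e^{-\tau\delta})\bigr)+\text{(from the circle)},
\end{equation*}
which is bounded by a constant, not by $C e^{-\tau\delta}$. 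This is the gap I must close.

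To get the exponential rate I would feed in the interlacing/gap hypothesis \eqref{wcond2}, which forces $\lambda_k^\T(\Gamma^{(k)})$ — hence the dominant monomials at consecutive levels — to be separated, and I would bootstrap: knowing $\prod_{j\le k}\sigma_j=e^{\tau m_k^\T(w)}(\phi(\alpha_k^\ast)+O(e^{-\tau\delta}))$ and $\prod_{j\le k-1}\sigma_j=e^{\tau m_{k-1}^\T(w)}(\phi(\alpha_{k-1}^\ast)+O(e^{-\tau\delta}))$, take the ratio to isolate $\sigma_k$. The tropical eigenvalue $\lambda_k^\T=m_k^\T-m_{k-1}^\T$ exceeds $\lambda_{k+1}^\T$ by more than $\delta$, so the telescoping is well-conditioned and one divides by quantities bounded away from zero; the multiplicative error in each $\sigma_k^2/e^{2\tau\lambda_k^\T}$ is then $1+O(e^{-\tau\delta})$ with constants depending only on $\Gamma$, and summing the logarithms $l_k^\B=\sum_{j\le k}\log\sigma_j$ keeps the error $O(e^{-\tau\delta})$ after absorbing the finitely many constants into a single $c=c(\Gamma)$. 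The main obstacle is precisely this last bootstrapping step: controlling $\sigma_k$ individually rather than only the product $\prod_{j\le k}\sigma_j$ requires that the dominant terms do not conspire to cancel, which is exactly what the strict interlacing gap \eqref{wcond2} and the genericity condition \eqref{wcond1} are designed to rule out; making the quantitative dependence uniform in $\phi\in\Phi(\Gamma)$ (so the $U(1)$-phases cannot produce cancellation in $m_k^{\C}$ that destroys the lower bound on $\prod\sigma_j$) is the delicate point, and I expect it is handled by noting that the modulus of the single dominant monomial $e^{\tau m_k^\T(w)}$ already forces $\prod_{j\le k}\sigma_j\ge c_n^{-1}|m_k^{\C}(M)|$... — no, rather by using the reverse inequality $\prod_{j\le k}\sigma_j=\|\Lambda^kM\|_{\mathrm{op}}\ge \binom nk^{-1/2}\|\Lambda^kM\|_{\ell^2}\ge c_n^{-1}|M_{I_k^\ast,J_k^\ast}|$ evaluated at the dominant minor, whose modulus is $e^{\tau m_k^\T(w)}(1+O(e^{-\tau\delta}))$ because within that single minor the dominant path again beats the rest by $\delta$. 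This gives a two-sided bound $e^{\tau m_k^\T(w)}(1-C e^{-\tau\delta})\le \prod_{j\le k}\sigma_j\le e^{\tau m_k^\T(w)}(1+C e^{-\tau\delta})$, and the proposition follows by taking logarithms and telescoping.
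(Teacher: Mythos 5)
Your strategy is close in spirit to the paper's, and the ingredients are the right ones (Lindstr\"om/Cauchy--Binet, dominance of the maximal path via \eqref{wcond1}, compound matrices), but the one step that actually produces an $O(e^{-\tau\delta})$ error rather than an $O(1)$ error is not established. You correctly observe midway that comparing $\prod_{j\le k}\sigma_j(M)=\|\Lambda^k M\|_{\mathrm{op}}$ with an entrywise norm of $\Lambda^k M$ ``up to dimension-dependent constants'' only bounds $|l_k^\B(M)-\tau m_k^\T(w)|$ by a constant; but the argument you then give to close this gap still carries those constants. The chain $\|\Lambda^kM\|_{\mathrm{op}}\ge\binom nk^{-1/2}\|\Lambda^kM\|_{\ell^2}\ge c_n^{-1}|M_{I^*,J^*}|$ yields a lower bound $c_n^{-1}e^{\tau m_k^\T}(1-\cdots)$, not $e^{\tau m_k^\T}(1-Ce^{-\tau\delta})$, and no upper bound of the form $e^{\tau m_k^\T}(1+Ce^{-\tau\delta})$ is derived anywhere: the two-sided bound asserted in your last sentence does not follow from the inequalities you wrote. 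The proposed ``telescoping'' of ratios $\sigma_k=\prod_{j\le k}\sigma_j\,/\prod_{j\le k-1}\sigma_j$ cannot repair this, since dividing two quantities each known only up to a multiplicative constant still leaves a multiplicative constant. A smaller issue: $m_k^{\C}(B)=\sum_{I,J}B_{I,J}$ is a linear functional, not the entrywise $\ell^1$-norm $\sum_{I,J}|B_{I,J}|$, so the norm-equivalence argument does not even apply to it as written.

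The gap is closable in two ways. The paper's route: by Cauchy--Binet, $\sum_{I,J}|M_{I,J}|^2$ is \emph{exactly} the $k$-th elementary symmetric function $e_k(\lambda(MM^*))$ of the eigenvalues of $MM^*$ --- no norm-equivalence constants appear --- and condition \eqref{wcond1} makes the single term $e^{2\tau m_k^\T}$ dominate this sum of nonnegative terms, giving $|\log e_k -2\tau m_k^\T|\le c_1e^{-\tau\delta}$; condition \eqref{wcond2} (which your final paragraph invokes but never uses quantitatively) is then what justifies replacing $e_k(\lambda)$ by the top product $\prod_{j\le k}\lambda_j$, because the $\delta$-gaps between consecutive tropical eigenvalues force $\lambda_{k+1}/\lambda_k$ to be exponentially small. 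Alternatively, your compound-matrix idea does work if you replace generic norm equivalence by the sharp inequalities $\max_{I,J}|M_{I,J}|\le\|\Lambda^kM\|_{\mathrm{op}}\le\|\Lambda^kM\|_F$, both with constant $1$: the left end is $\ge|M_{I^*,J^*}|\ge e^{\tau m_k^\T}(1-Ce^{-\tau\delta})$ and the right end is $\le e^{\tau m_k^\T}(1+Ce^{-\tau\delta})^{1/2}$ by the same dominance argument, which would even bypass \eqref{wcond2}. As written, however, neither route is carried out, so the proof is incomplete at its decisive step.
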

\begin{proof}
Let $\ls_i(A)$ be the elementary symmetric functions of the
singular values of $A$:
\[      1+\sum_{i=1}^n \ls_i(A)\,q^i = \prod_{i=1}^n (1+q \lambda_i(AA^*)) .
\]
 
The determinantal expansion for $AA^*$ with $A=M(\Gamma;\tau w, \phi)$ gives the formula

$$
  \ls_i(M(\Gamma;\tau w,\phi))=
  \sum_{|I|,|J|=i}\left|\sum_{\alpha\in P_i\Gamma(I,J)} 
   \phi(\alpha)\,\exp(\tau w(\alpha))\right|^2. 
$$

Isolating the dominant term $\exp(2\tau m^\T_i(\Gamma, w))$ of the
sum, and using condition \eqref{wcond1} of $w \in W_\delta(\Gamma, \R)$, we obtain
the estimate
\[   \left|\ls_i(M(\Gamma;\tau w,\phi))/
\exp(2\tau m^\T_i(\Gamma, w))\right|\leq 1+c_1 e^{-\tau\delta}
\]
for some constant $c_1$. This implies
\begin{equation}
  \label{ineq1}
   |\log\ls_i(M(\Gamma;\tau w,\phi))-2\tau m^\T_i(\Gamma, w)|\leq  c_1 e^{-\tau\delta}.
\end{equation}

A simple calculation using the second condition \eqref{wcond2}  shows that
$\ls_i(M(\Gamma;\tau w,\phi))$ may be replaced by the dominant term given by the
product of the top $k$ singular values:
\begin{equation}
  \label{ineq2}
  |\log\ls_i(M(\Gamma;\tau w,\phi))-\log\prod_{j=1}^i \, \lambda_j(M(\Gamma;\tau
    w,\phi)M(\Gamma;\tau w,\phi)^*)|<c_2 e^{-\tau\delta}.
\end{equation}
Now, combining \eqref{ineq1} and \eqref{ineq2}, and using  notation
\eqref{notationl}, we obtain the desired inequality \eqref{estlambda} for $\tau \geq 1$.
\end{proof}

\begin{ex}
  Consider the case of $\Gamma_0$ and $n=2$. To simplify things, we
  choose $\phi(e)=1$ for all the edges of the network.

\begin{figure}[h]
\begin{tikzpicture}
\draw (2,0) -- (5,0);
\draw (2,1) -- (5,1);
\draw (3,1) -- (4,0);
\draw(2.8,0) node[above]{$z$};
\draw(4,1) node[above]{$x$};
\draw(3.8,0.2) node[above]{$y$};
\fill[black] (2,0) circle (1.5pt) node[left]{$1$};
\fill[black] (2,1) circle (1.5pt) node[left]{$2$};
\fill[black] (5,0) circle (1.5pt) node[right]{$1$};
\fill[black] (5,1) circle (1.5pt) node[right]{$2$};
\end{tikzpicture}
\caption{The planar network $\Gamma$ for $n=2$.}\label{net:2}
\end{figure}
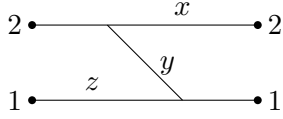
Then we have
$$
m^\T_1(w)=\max(x,y,z), \hskip 0.3cm m^\T_2(w)=x+z.
$$
The correspondence map gives the matrix
$$
M(\Gamma;\tau w)=
\left(
\begin{array}{ll}
e^{\tau x} & e^{\tau y} \\
0 & e^{\tau z}
\end{array}
\right).
$$
For its singular values, we have
$$
l^\B_1(M(\Gamma;\tau w)) =\frac{1}{2} \log\left(\frac{1}{2}\,( U + \sqrt{U^2 - 4V}) \right),  \hskip 0.3cm l^\B_2(M(\Gamma;\tau w)) = \tau(x+y),
$$
where $U=e^{2\tau x} + e^{2\tau y} + e^{2\tau z}$ and $V=e^{2\tau (x+z)}$.
Clearly, $l^\B_2(M(\Gamma;\tau w)) = \tau m^\T_2(w)$, and it is easy to verify that 
$$
\lim_{\tau \to + \infty} \frac{1}{\tau} \, l^\B_1(M(\Gamma;\tau w)) = \max(x,y,z)=m^\T_1(w).
$$

\end{ex}

Proposition \ref{limit} has the following corollary:
\begin{corollary} \label{tropinr} Let $\Gamma$ be a planar network of
  rank $n$. Then the convex polyhedral cone $L^\T(W(\Gamma, \R))$ is
  contained in the smallest closed cone containing the image of $W(\Gamma, \R)$
  under the map
  $L^\B \circ M$.
\end{corollary}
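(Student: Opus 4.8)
The plan is to deduce Corollary~\ref{tropinr} directly from the exponential estimate of Proposition~\ref{limit} together with a density argument for the exceptional weightings. First I would fix a planar network $\Gamma$ of rank $n$ and recall that $L^\T(W(\Gamma,\R))$ is a finite union of polyhedral pieces on which each $m^\T_i(\Gamma^{(k)},\cdot)$ is linear; since we want a statement about the closed cone $\overline{L^\B\circ M}$, it suffices to show that every point $L^\T(w)$ with $w\in W(\Gamma,\R)$ is a limit of points of the form $\frac1\tau L^\B(M(\Gamma;\tau w',\phi))$. The key observation is that the bad set $W(\Gamma,\R)\setminus W_\delta(\Gamma,\R)$ is contained in a $\delta$-neighborhood of finitely many hyperplanes (the paragraph before Proposition~\ref{limit}), so for every $w\in W(\Gamma,\R)$ and every $\epsilon>0$ there is some $w'\in W_\delta(\Gamma,\R)$, for a suitable $\delta>0$, with $|L^\T(w)-L^\T(w')|<\epsilon$; indeed $L^\T$ is continuous and $\bigcup_{\delta>0}W_\delta(\Gamma,\R)$ is dense in $W(\Gamma,\R)$.

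Next I would apply Proposition~\ref{limit} to such a $w'$: choosing any $\phi\in\Phi(\Gamma)$, e.g.\ $\phi\equiv 1$, we get a constant $c$ with
\[
\left|\tfrac1\tau l^\B_i(M(\Gamma;\tau w',\phi))-m^\T_i(\Gamma,w')\right|<c\,e^{-\tau\delta}
\]
for all $\tau\ge 1$ and all $i$, and the same estimate holds row by row for each subnetwork $\Gamma^{(k)}$, since a subnetwork of $\Gamma$ is again a planar network and the restriction of $w'$ to it still lies in the corresponding $W_\delta$. (One should note the mild subtlety that $l^\B$ of a submatrix of $M(\Gamma;\tau w',\phi)$ agrees with $l^\B$ applied to the matrix attached to the subnetwork $\Gamma^{(k)}$; this is immediate from the Lindström/correspondence description of minors.) Letting $\tau\to\infty$, the point $\frac1\tau L^\B(M(\Gamma;\tau w',\phi))$ converges to $L^\T(w')$, which is therefore in the closed cone $\overline{L^\B\circ M(W(\Gamma,\R)\times\Phi(\Gamma))}$. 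Combining with the $\epsilon$-approximation $|L^\T(w)-L^\T(w')|<\epsilon$ and letting $\epsilon\to 0$ shows $L^\T(w)$ lies in the same closed cone.

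Finally, since $L^\T(W(\Gamma,\R))$ is the union over $w$ of these points and is itself a convex polyhedral cone (by Theorem~\ref{mainold} and the piecewise-linearity of $L^\T$), and since it is contained in the closed cone generated by the image of $L^\B\circ M$, the corollary follows. The main obstacle I anticipate is the bookkeeping needed to pass the single-matrix estimate \eqref{estlambda} to all rows of the tableau simultaneously — that is, checking that $w'\in W_\delta(\Gamma,\R)$ forces the restrictions to all $\Gamma^{(k)}$ to satisfy the same gap conditions, so that a \emph{single} constant $c$ and a single $\delta$ govern the convergence of the entire map $L^\T$; once this uniformity is in hand, the limit argument is routine.
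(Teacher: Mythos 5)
Your argument is correct and is essentially the paper's own proof: approximate $w$ by $w'\in W_\delta(\Gamma,\R)$ using density of $\bigcup_\delta W_\delta$, apply the exponential estimate of Proposition~\ref{limit} and let $\tau\to\infty$, then let $\delta,\epsilon\to 0$. The only difference is that you spell out the (correct, and glossed over in the paper) point that the estimate must be applied to each subnetwork $\Gamma^{(k)}$ to control the full Gelfand--Zeitlin maps rather than just the top row.
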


\begin{proof}
Indeed, \eqref{estlambda} implies that the distance of any element of
the interior of the cone
$L^\T(W_\delta(\Gamma, \R))$ from the image of $W(\Gamma, \R)$
  under the map
  $\frac{1}{\tau} L^\B \circ M$
 is
exponentially small as $\tau\to\infty$. But any point of $L^\T(W(\Gamma, \R))$ 
may be approximated by a 
point in $L^\T(W_\delta(\Gamma, \R))$ for $\delta$ small enough.
\end{proof}

\subsection{Preimages under the Gelfand--Zeiltin map}\label{preimgz}
Following Flaschka--Ratiu \cite{FRpreprint}, we introduce  a notion of the 
Gelfand--Zeitlin map for upper triangular matrices. By definition, the
components of the map $L_\B: \B \to \R^\nabla$ are the values
$l_i^\B(A^{(k)})$, where $A^{(k)}$ is the principal $k$-by-$k$ submatrix of $A$.

The following statement is a standard fact from linear algebra:

\begin{proposition}\label{gzprop}
  The image of the map $L_\B$ is the cone $\conegz$, and the fibers of
  $L_\B:\B\to\R^\nabla$ are topological tori of various dimensions (or
  empty). In particular, for an interior point $\Xi\in\conegz$ in the
  Gelfand--Zeitlin cone, the fiber of $L_\B$ is a torus is of the
  maximal possible dimension $n(n-1)/2$.
\end{proposition}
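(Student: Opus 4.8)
The plan is to establish both assertions simultaneously by induction on $n$, reconstructing an upper-triangular matrix one border at a time --- exactly as in the classical analysis of the Hermitian Gelfand--Zeitlin system --- the single new feature being that the positivity of the freshly added diagonal entry will turn out to be automatic. The base case $n=1$ is immediate, since $L_\B(a)=\log a$ is a bijection $\B=\R_{>0}\to\R=\conegz$.

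For the inductive step, write $A\in\B$ in block form, with principal block $A'=A^{(n-1)}\in\B$, last diagonal entry $a=a_{nn}>0$, and last-column vector $v\in\C^{n-1}$. First I would record the elementary identity that, for upper-triangular $A$ and every $k$, the principal $k$-by-$k$ submatrix of $A^*A$ equals $(A^{(k)})^*A^{(k)}$: the first $k$ columns of $A$ are supported in its first $k$ rows, so the $(i,j)$-entry of $A^*A$ with $i,j\le k$ only involves $A^{(k)}$. Consequently, rows $0,\dots,n-1$ of $L_\B(A)$ depend only on $A'$ and constitute its size-$(n-1)$ Gelfand--Zeitlin pattern, while the Cauchy interlacing theorem applied to the Hermitian matrix $A^*A$ and its principal submatrix $(A^*A)^{(n-1)}=(A')^*A'$ shows that the squared singular values of $A$ interlace those of $A'$. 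Taking half the logarithm and passing to the differences $\lambda^k_i=l^k_i-l^k_{i-1}$ of the components of $L_\B(A)$, this is precisely the block of inequalities in \eqref{eq:GZ} relating row $n-1$ to row $n$; together with the inductive hypothesis this gives $L_\B(\B)\subseteq\conegz$ and shows that the fibre of $L_\B$ over a pattern projects, under $A\mapsto A^{(n-1)}$, to the corresponding fibre in size $n-1$.

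For surjectivity and the torus description, fix a pattern $\Xi$; if its top $n-1$ rows are not in the size-$(n-1)$ cone the fibre is empty by induction, so assume they are and choose, inductively, $A'$ realizing them. It remains to analyze the \emph{bordered problem}: for which pairs $(v,a)$ with $v\in\C^{n-1}$, $a>0$, does the resulting $A$ have the singular values $\sigma_1\ge\dots\ge\sigma_n$ prescribed by row $n$ of $\Xi$? Passing to $A^*A$ --- whose principal $(n-1)$-block is $(A')^*A'$, whose off-diagonal block is $(A')^*v$, and whose bottom-right entry is $|v|^2+a^2$ --- diagonalizing $(A')^*A'=UDU^*$ with $D=\mathrm{diag}((\sigma'_1)^2,\dots,(\sigma'_{n-1})^2)$, and substituting $w=U^*(A')^*v$ (a linear isomorphism in $v$, since $A'$ is invertible), reduces the question to bordering $D$ by a vector $w$ so that the result has eigenvalues $\sigma_1^2,\dots,\sigma_n^2$. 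The secular equation then forces the bottom-right entry to be $d=\sum_i\sigma_i^2-\sum_i(\sigma'_i)^2$ and each $|w_i|^2$ to equal a definite ratio of products of the numbers $\sigma_j^2-(\sigma'_i)^2$, which is nonnegative precisely by the interlacing; so the moduli $|w_i|$ are determined and the phases of the nonvanishing ones are free. Back-substituting, $v=((A')^*)^{-1}Uw$ is determined, and $a^2=d-|v|^2$ is the Schur complement of the positive-definite block $D$ inside the positive-definite bordered matrix, hence strictly positive, which pins down a unique admissible $a>0$. Thus the border always exists, giving $L_\B(\B)=\conegz$; and the admissible $(v,a)$ form a torus whose dimension equals the number of nonvanishing $|w_i|$, which is $n-1$ exactly when the interlacing between rows $n-1$ and $n$ is strict. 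Accumulating the $n-1$ bordering steps exhibits a generic fibre as an iterated torus bundle of total dimension $1+2+\dots+(n-1)=n(n-1)/2$; that this iterated bundle is actually a torus is the standard Gelfand--Zeitlin picture (one may also see it by transporting, through the Cholesky diffeomorphism $A\mapsto A^*A$, which by the identity above respects principal submatrices, to the Hermitian Gelfand--Zeitlin map, whose fibres over interior patterns are classically tori of dimension $n(n-1)/2$). Over boundary points of $\conegz$ one gets tori of smaller dimension.

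The step requiring the most care is the bordered-matrix lemma. When the $\sigma'_i$ are not pairwise distinct the secular equation must be written differently; but interlacing then forces matching multiplicities among the $\sigma_i$, and the degenerate case reduces to the distinct one by splitting off the common eigenspace (or by a limiting argument). The other point worth isolating is that the positivity $a^2>0$ is not something one imposes, but a consequence of the fact that a Schur complement of a positive-definite principal block inside a positive-definite matrix is itself positive definite --- and this is exactly what keeps the reconstructed $A$ inside $\B$, with positive diagonal, rather than merely in the group of invertible upper-triangular matrices.
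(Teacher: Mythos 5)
Your proof is correct and follows essentially the same route as the paper's sketch: reconstructing the matrix row by row via the bordered Hermitian matrix $A^*A$, solving the secular equation for the moduli $|w_i|^2$ (nonnegative exactly by interlacing) with the phases left free, yielding tori as fibers. You supply some details the paper's sketch leaves implicit — notably the identity $(A^*A)^{(k)}=(A^{(k)})^*A^{(k)}$, the surjectivity onto $\conegz$, and the Schur-complement argument guaranteeing the positivity of the new diagonal entry — but the underlying mechanism is identical.
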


For the convenience of the reader, we sketch a proof of this proposition.
Let $A$ be in $\B_{k+1}$, and set $a=AA^*$ to be the corresponding positive
definite Hermitian matrix.  Denote the eigenvalues of $a$ by $\lambda_0 \geq
\lambda_1 \geq \dots \geq \lambda_k$.  By conjugating the matrix $a$
with an element of $h\in U(k) \subset U(k+1)$, we can bring it to the
form
\begin{equation} \label{a=block}
h a h^{-1}=
\left(
\begin{array}{lllll}
a_{0,0} & a_{0,1}  & a_{0,2} & \dots & a_{0,k} \\
a_{1,0} & \mu_1 &   0           & \dots & 0          \\
a_{2,0} & 0         &  \mu_2   &  \dots & 0       \\
\dots    &  \dots   & \dots      & \dots  & \dots   \\
a_{k,0} & 0          &  0           & \dots & \mu_k
\end{array}
\right).
\end{equation}
Then the condition that $\lambda_i$'s are zeros of the characteristic
polynomial of $a$ gives rise to a system for linear equations on
$a_{0,0}, |a_{0,1}|^2, \dots, |a_{0,k}|^2$, which admits a unique
solution. Hence, the set of matrices $a$ of the form \eqref{a=block}
with given eigenvalues is a torus of dimension at most $k$.  When the
eigenvalues $\lambda_i$ and $\mu_j$ are all distinct, we have
$|a_{0,i}| \neq 0$ for all $i$. In this case, the torus is parametrized by the
angles $\phi_i = {\rm Arg}(a_{0,i})$, and hence it is of dimension
exactly equal to $k$.

Applying this procedure to the natural chain of projections $\B_n \to
\B_{n-1} \to \dots \B_1$, we see that that the fibers of $L_\B$ are
tori of dimension at most $1 + \dots + (n-1) = n(n-1)/2$. When all
eigenvalues of the principal submatrices of $a$ are distinct (i.e.
$L_\B(A)$ is in the interior of the Gelfand--Zeiltin cone), then the
dimension of the fiber is exactly $n(n-1)/2$. \qed

The following proposition describes the preimages of the map $L_\B$
for sufficiently large values of the scaling parameter $\tau$. Introduce the notation
$\conez(\delta)=W_\delta(\Gamma_0, \R) \cap\conez$, $\conegz(\delta) =
L_\T(\conez(\delta))$, and  let $\overline{\Phi} \subset W(\Gamma_0,
U(1))$ stand for the weightings of $\Gamma_0$ 
with values in $U(1)$ taking value $1$ on all horizontal edges.

\vskip 0.2cm

\begin{proposition} \label{onemprop} For every $\delta>0$, there
  exists $\tau_0>0$ such that for all $\tau \geq \tau_0$ the following
  statement holds: for $\Xi\in\conegz(\delta)$ and $A \in
  L_\B^{-1}(\tau\Xi)$, there exist $w\in\conez(\delta/2)$ and $\phi
  \in \overline{\Phi}$ such that $A=M(\Gamma;\tau w,\phi)$.
\end{proposition}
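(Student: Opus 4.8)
The plan is to reconstruct the pair $(w,\phi)$ from $A$ by inverting the correspondence map on the ``triangular'' network $\Gamma_0$, exploiting the fact that the matrices $M(\Gamma_0;w,\phi)$ are upper-triangular and that on $\overline W(\Gamma_0,\T)$ the data of $w$ is in bijection with the tableau entries. First I would recall from Proposition~\ref{ourcone} that $L_\T$ restricts to an isomorphism $\conez\to\conegz$, so that for a given $\Xi\in\conegz(\delta)$ there is a canonical candidate $w_0=L_\T^{-1}(\Xi)\in\conez(\delta)$ (here I use that $\conegz(\delta)=L_\T(\conez(\delta))$ by definition). The weighting $w_0$ records, entry by entry along the network, the tropical sums of singular values of all principal subnetworks. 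The idea is that the ``true'' real weighting $w$ we seek is an exponentially small perturbation of $w_0$, and that the phase data $\phi\in\overline\Phi$ is read off from the arguments of the matrix entries that appear in the Gelfand--Zeitlin reduction.

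The key steps, in order: (1) Use Proposition~\ref{limit} (applied to the subnetwork $\Gamma_0^{(k)}$ and scaling $\tau$), together with the fact that $w_0\in W_\delta(\Gamma_0,\R)$, to conclude that any $A'=M(\Gamma_0;\tau w_0,\phi')$ has $L_\B(A')$ within $c\,e^{-\tau\delta}$ of $\tau\Xi$ componentwise, uniformly in $\phi'\in\Phi(\Gamma_0)$; this shows the image $\{M(\Gamma_0;\tau w_0,\phi')\}$ sits in a tiny neighborhood of the fiber $L_\B^{-1}(\tau\Xi)$. (2) Conversely, parametrize $L_\B^{-1}(\tau\Xi)$ using the reduction \eqref{a=block} from the sketch of Proposition~\ref{gzprop}: since $\Xi$ is interior and the gap inequalities \eqref{wcond2} hold with margin $\delta$, the fiber is a smooth torus of dimension $n(n-1)/2$, cut out by the interlacing data row by row, and its angular coordinates are exactly the arguments $\phi_i={\rm Arg}(a_{0,i})$ at each stage. (3) Show that the map $(w,\phi)\mapsto A=M(\Gamma_0;\tau w,\phi)$, restricted to $\conez(\delta/2)\times\overline\Phi$, is a local diffeomorphism onto a neighborhood of the fiber $L_\B^{-1}(\tau\Xi)$ for $\tau$ large: the $w$-directions move $L_\B$ by an amount comparable to $\tau$ (the leading tropical term dominates, by \eqref{wcond1}), while the $\phi$-directions move along the fiber; the Jacobian is therefore nondegenerate with a quantitative lower bound depending only on $\delta$ and $\Gamma_0$, once $\tau\geq\tau_0(\delta)$. (4) Apply a quantitative inverse function theorem / continuity argument: starting from the candidate $(w_0,\phi)$ (with $\phi\in\overline\Phi$ determined from the angular coordinates of the given $A$), the residual $L_\B(M(\Gamma_0;\tau w_0,\phi))-\tau\Xi$ is exponentially small by step (1), so it can be corrected by a perturbation $w_0\mapsto w\in\conez(\delta/2)$ of size $O(e^{-\tau\delta})$, keeping $\phi$ fixed; this yields $A=M(\Gamma_0;\tau w,\phi)$ exactly. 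The condition $w\in\conez(\delta/2)$ rather than $\conez(\delta)$ is precisely what absorbs this exponentially small correction, provided $\tau_0$ is chosen large enough that $c\,e^{-\tau_0\delta}<\delta/2$ after rescaling.

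The main obstacle I expect is step (3)–(4): making the inversion \emph{quantitative and uniform} over the whole region $\conegz(\delta)$ (which is non-compact, being a cone), and checking that the correction stays inside $\conez(\delta/2)$ rather than merely inside $\conez$. Two points need care. First, one must use the scale-invariance of the whole setup — the cone $\conegz$ and the map $M$ are homogeneous under $\Xi\mapsto\sigma\Xi$, $\tau\mapsto\sigma^{-1}\tau$ — to reduce the uniformity question over the cone to a statement on a compact slice, after which compactness gives a single $\tau_0$. Second, one must verify that the angular coordinates extracted from the \emph{given} $A\in L_\B^{-1}(\tau\Xi)$ indeed lie in $\overline\Phi$, i.e. that they correspond to weight-$1$ assignments on the horizontal edges of $\Gamma_0$; this is where the specific combinatorial structure of $\Gamma_0$ (only the ``diagonal'' edges carry non-horizontal moves, and the horizontal edges adjacent to sinks are the only ones with free weight) must be matched against the $\phi_i={\rm Arg}(a_{0,i})$ appearing in each Gelfand--Zeitlin reduction step. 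Once these two bookkeeping issues are settled, the estimate from Proposition~\ref{limit} does the analytic work and the statement follows.
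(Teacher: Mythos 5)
Your overall strategy---start from the candidate $w_0=L_\T^{-1}(\Xi)$, use Proposition~\ref{limit} to see that $M(\Gamma_0;\tau w_0,\phi)$ lands exponentially close to the fiber $L_\B^{-1}(\tau\Xi)$, and then correct---shares its starting point with the paper, but the way you propose to close the argument has two genuine gaps.

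First, your step (3) asserts that $(w,\phi)\mapsto M(\Gamma_0;\tau w,\phi)$ is a local diffeomorphism with a \emph{quantitative lower bound on the Jacobian}. Nothing available supports this: Proposition~\ref{limit} is a $C^0$ estimate on the singular values, and a $C^0$ bound of the form $|g(w)-w|<\epsilon$ gives no control whatsoever on derivatives. Establishing the Jacobian bound would require a separate (and nontrivial) $C^1$ version of the tropical approximation, showing that the derivatives of $l^\B_i\circ M$ converge to the (piecewise-linear, hence only a.e.\ defined) derivatives of $m^\T_i$ uniformly on $W_\delta$. The paper deliberately avoids this: it only needs the $C^0$ statement, fed into the elementary degree-theoretic fact that a continuous self-map of a convex set which is $\epsilon$-close to the identity is onto the $\epsilon$-shrunk set. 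That argument yields, for \emph{each fixed} $\phi\in\overline\Phi$, some $w\in\conez(\delta/2)$ with $M(\Gamma;\tau w,\phi)\in L_\B^{-1}(\tau\Xi)$ --- no differentiability needed.

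Second, and more fundamentally, your plan to ``read off'' $\phi$ from the given $A$ via the angles $\phi_i=\mathrm{Arg}(a_{0,i})$ of the Gelfand--Zeitlin reduction presupposes an explicit correspondence between the $U(1)$-weights on the diagonal edges of $\Gamma_0$ and the angular coordinates of the fibration $L_\B$. You flag this as ``bookkeeping,'' but it is the actual crux: that correspondence is never established in the paper and is not needed by its proof. Even granting your local diffeomorphism, knowing that one point of the fiber lies in $M(\conez(\delta/2)\times\overline\Phi)$ does not show the \emph{whole} fiber does; you would still need a global covering argument. The paper supplies exactly this missing global step by invoking Proposition~\ref{gzprop}: the fiber $L_\B^{-1}(\tau\Xi)$ is a connected torus, the image $M(\tau\conez(\delta/2)\times\overline\Phi)$ meets it (by the degree argument) and its intersection with the fiber is a nonempty union of components, hence is all of it. Without either the connectedness argument or a proved phase--angle dictionary, your construction produces a point of the fiber of the form $M(\Gamma;\tau w,\phi)$, but not the \emph{given} $A$.

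Your remark on using scale invariance to handle non-compactness of the cone is sensible but unnecessary for the paper's route, since the constants in Proposition~\ref{limit} depend only on $\Gamma$ and $\delta$ and are already uniform over $W_\delta(\Gamma,\R)$.
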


For $ \xi \in \Delta_0$ such that $L_\T(\xi)=\Xi \in \Delta_{{\rm GZ}}(\delta)$, the
point $\tau\Xi$ has the unique preimage $\tau \xi \in \Delta_0$ under
the map $L_\T$.  Proposition \ref{onemprop} states that for
$U_\epsilon$ an $\epsilon$-neighborhood of $\tau \xi$, the image of
$U_\varepsilon \times \overline{\Phi}$ under the correspondence map
$M$ contains the full preimage $L_\B^{-1}(\tau \Xi) \subset \B$.
\[   \begin{tikzcd}
  \Delta_0\times \overline{\Phi} \dar{ }\rar{M} & \B  \dar{L_\B} \\
\Delta_0\rar{L_{\T}} & \Delta_{{\rm GZ}}
\end{tikzcd} \]

\begin{proof}
Fix $\phi\in \overline{\Phi}$ , and consider the map 
\[    f_\phi: \conez\to\conez, \quad
f_\phi(w) = L_\T^{-1}\circ L_\B \circ  M(\Gamma;w,\phi).
\]
Fix $\varepsilon>0$. According to Proposition \ref{limit}, for some $\tau_0$ and $c$, as long as
$w\in W_{\delta/2}(\Gamma_0, \R)$ and $\tau>\tau_0$ we have
$$
\left| \frac{1}{\tau} L_\B \circ M(\Gamma;\tau w, \phi) - L_\T(w) \right| \leq c_1 e^{-\tau \delta/2}.
$$
This inequality can be rewritten as
\begin{equation}
  \label{estimf}
  \left|\frac1\tau L_\T(f_\phi(\tau w))- L_\T(w)\right| \leq c_1\cdot e^{-\tau\delta/2}.  
\end{equation}
Since the restriction of $L_\T$ to $\Delta_0$ is a non-degenerate linear map, this implies
$$
  \left|\frac1\tau f_\phi(\tau w)- w \right| \leq c_2 \cdot e^{-\tau\delta/2}.
$$

Recall the following standard homological argument: let $S$ be a 
convex subset of $\R^{N}$, and  $g: S \to \R^N$ be a continuous map
satisfying $|g(w)-w|<\epsilon$; then the image $g(S)$ contains all
points of $S$ that are at least at the distance $\epsilon$ from its
boundary:
\[     g(S)\supset\{s\in S;\; d(s,\partial S)>\epsilon\}.
\]

Now let $S=\Delta_0(\delta/2)$, $g: w \mapsto \frac{1}{\tau} f_\phi(\tau w)$ and $\epsilon =\delta/2$.
The argument above shows that for large enough $\tau$ the image of the set
$\conez(\delta/2)$ under the map $w\mapsto \frac1\tau f_\phi(\tau w)$
will contain $\conez(\delta)$.

Hence, for every $\phi$, we constructed a point $A_\phi = M(\Gamma;\tau w, \phi)$ in the preimage $L_\B^{-1}(\tau \Xi)$.
By equation \eqref{estimf}, we have
 $$
 |\Xi - L_\T(w)| \leq c_1 \exp(-\delta \tau/2)
 $$ 
 and 
 $$
 |\xi - w| \leq c_2 \exp(-\delta \tau/2).
 $$
By choosing $\tau$ sufficiently large, we can make sure that $|\xi - w| \leq \varepsilon$. Then
$$
M( (\Delta_0(\delta/2) \backslash U_\varepsilon)  \times \overline{\Phi}) \cap L_\B^{-1}(\tau \Xi) = \emptyset .
$$ 
We would like to show that 
$$
L_\B^{-1}(\tau\Xi) \subset M(U_\varepsilon \times \overline{\Phi}).
$$

By Proposition \ref{gzprop}, the preimage $L_\B^{-1}(\tau\Xi)$ is connected. The argument above shows that 
$$
L_\B^{-1}(\tau\Xi)\cap M(\tau\Delta_0(\delta/2) \times \overline{\Phi})
$$
 is a nonempty connected component of
$L_\B^{-1}(\tau\Xi)$. This means that the entire set
$L_\B^{-1}(\tau\Xi)$ is contained in $M(\tau\Delta_0(\delta/2)\times \overline{\Phi})$.

\end{proof}

\subsection{Tropical analysis of the Horn problem}

Now we can pass to our main focus, the Horn problem. The tropical
analysis is quite similar to that of the previous section.

First, we need to define an approproate analog of the set $W_\delta$.
Let $\WW_\delta \subset W(\Gamma_0 \circ \Gamma_0, \R)$ be defined by the following conditions:
\begin{itemize}

\item
for any two distinct subsets $\alpha, \beta \subset \Gamma_0 \circ \Gamma_0$, we have
$| \alpha(w_1 \circ w_2) -\beta(w_1 \circ w_2) | > \delta$,

\item
$L_\T(w_1), L_\T(w_2), L_\T(w_1 \circ w_2) \in \Delta_{{\rm GZ}}(\delta)$.
\end{itemize}

We will also need the corresponding image set $\Sigma(\delta) = L_\T^{\times2} \WW_\delta\subset \R^\nabla\times \R^\nabla$.

\begin{ex}
  Consider the case of $n=2$. The cone $\conegz \times \conegz$ is
  defined by the inequalities $r_2 - r_1 \leq l \leq r_1$ and $s_2-s_1 \leq
  m \leq s_1$. Among others, we have the following inequalities
  defining $\Sigma(\delta)$:
\begin{equation*}\begin{split}
(r_2 - r_1) + \delta < l,\  l < r_1 - \delta,\ (s_2 -s_1) + \delta <m,\  m< s_1 - \delta,\\
|(r_2+s_1) -(l+m+r_1)| >\delta.
\end{split}
\end{equation*}
Note that the first four inequalities state that the point is inside $\conegz(\delta)\times \conegz(\delta)$
whereas the last inequality involves both copies of the cone $\conegz$ at the same time.

\end{ex}

We also introduce the tropical and the usual Horn maps:
$$
\begin{array}{lll}
H_\T(w_1,w_2)  & =  & (m^\T(\Gamma_0,w_1),m^\T(\Gamma_0,w_2),m^\T(\Gamma_0\circ\Gamma_0,w_1\circ w_2)), \\
H_\B(A_1,A_2)  & =  & (l^\B(A_1),l^\B(A_2),l^\B(A_1A_2)).
\end{array}
$$

With these preparations, we can formulate the tropical estimate for
the Horn problem as follows: 

\begin{proposition} \label{proph}
For every $\delta>0$, there exist $\tau_0 >0$ and a constant $c$ such that for every $\tau \geq \tau_0$ the following statement holds:
for $(\Xi_1,\Xi_2)\in \Sigma(\delta)$ and   $A_1 \in L_\B^{-1} (\tau
\Xi_1),\  A_2 \in L_\B^{-1}(\tau \Xi_2)$, there exist $w_1 \circ w_2 \in \WW_{\delta/2}$, 
 and $\phi_1,\phi_2\in \overline{W}(\Gamma_0, U(1))$ such that $M(\Gamma;\tau
w_1,\phi_1)=A_1$, $M(\Gamma;\tau
w_2,\phi_2)=A_2$, and
\begin{equation}
  \label{hineq}
  \left|\frac1\tau H_\B(A_1,A_2)-H_\T(w_1,w_2)\right|\leq ce^{-\delta\tau}.
\end{equation}

\end{proposition}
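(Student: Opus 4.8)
The plan is to reduce Proposition~\ref{proph} to an application of Proposition~\ref{onemprop} applied to the concatenated network $\Gamma_0 \circ \Gamma_0$, together with a tropical approximation estimate for the concatenated weighting. First, I would observe that $\Gamma_0 \circ \Gamma_0$ is itself a planar network of rank $n$, and that the set $\WW_\delta$ is constructed so that any $w_1\circ w_2 \in \WW_\delta$ lies in $W_{\delta}(\Gamma_0\circ\Gamma_0,\R)$: indeed the first defining condition of $\WW_\delta$ is precisely condition \eqref{wcond1} for the big network, and the condition $L_\T(w_1\circ w_2)\in \conegz(\delta)$ supplies \eqref{wcond2} for the principal subnetworks of $\Gamma_0\circ\Gamma_0$ (note that the principal subnetworks of $\Gamma_0\circ\Gamma_0$ are the concatenations $\Gamma_0^{(k)}\circ\Gamma_0^{(k)}$, and the $m^\T$-values read off from them are exactly the components of $L_\T(w_1\circ w_2)$). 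Similarly, for the individual factors, the hypothesis $L_\T(w_i)\in\conegz(\delta)$ places $w_i$ in $W_{\delta}(\Gamma_0,\R)\cap\conez$; the fact that these weightings can be taken in $\conez$ (i.e.\ supported off the non-sink horizontal edges) comes from the $\overline\Phi$-version of Proposition~\ref{onemprop}, applied separately to $A_1$ and $A_2$.

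Next, I would apply Proposition~\ref{onemprop} twice. Given $(\Xi_1,\Xi_2)\in\Sigma(\delta)$, by definition there exist $\hat w_1,\hat w_2$ with $L_\T(\hat w_i)=\Xi_i$ and such that $\hat w_1\circ\hat w_2\in\WW_\delta$; in particular $\Xi_1,\Xi_2\in\conegz(\delta)$. For $\tau$ large enough (depending on $\delta$), Proposition~\ref{onemprop} produces, for each $A_i\in L_\B^{-1}(\tau\Xi_i)$, a weighting $w_i\in\conez(\delta/2)$ and a phase $\phi_i\in\overline\Phi\subset\overline W(\Gamma_0,U(1))$ with $M(\Gamma;\tau w_i,\phi_i)=A_i$, and moreover $|L_\T(w_i)-\Xi_i|\le c_1 e^{-\delta\tau/2}$ and $|\xi_i-w_i|\le c_2e^{-\delta\tau/2}$ where $\xi_i = L_\T^{-1}(\Xi_i)\in\conez$. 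The key point is now to check that $w_1\circ w_2$ lands in $\WW_{\delta/2}$: the second defining condition follows since $L_\T(w_i)$ is within $c_1e^{-\delta\tau/2}$ of $\Xi_i\in\conegz(\delta)$, hence in $\conegz(\delta/2)$ for $\tau$ large, and the value $L_\T(w_1\circ w_2)$ is within an exponentially small distance of $L_\T(\hat w_1\circ\hat w_2)\in\conegz(\delta)$ by continuity of $m^\T$ on the concatenated network (both $w_i$ are exponentially close to $\hat w_i = \xi_i$). The first defining condition (the strict $\delta/2$-separation of $w_1\circ w_2$ on subsets of $E(\Gamma_0\circ\Gamma_0)$) again follows because $w_1\circ w_2$ is within $c e^{-\delta\tau/2}$ of $\hat w_1\circ\hat w_2\in\WW_\delta$, so the $\delta$-gaps there degrade to gaps $>\delta/2$ for $\tau$ large.

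Finally, the inequality \eqref{hineq} is obtained by applying the single-matrix estimate of Proposition~\ref{limit} three times: to $(\Gamma_0,w_1)$, to $(\Gamma_0,w_2)$, and to $(\Gamma_0\circ\Gamma_0, w_1\circ w_2)$ — the latter being legitimate precisely because $w_1\circ w_2\in W_{\delta/2}(\Gamma_0\circ\Gamma_0,\R)$, as established above, and $M(\Gamma_0\circ\Gamma_0; \tau(w_1\circ w_2),\phi_1\circ\phi_2)=A_1 A_2$ by the concatenation identity \eqref{concat}. Each application bounds one of the three blocks $\frac1\tau l^\B(A_1)$, $\frac1\tau l^\B(A_2)$, $\frac1\tau l^\B(A_1A_2)$ against its tropical counterpart by $c\,e^{-\delta\tau/2}$, and assembling the three gives \eqref{hineq} with the exponent $\delta\tau/2$ (renaming $\delta$, or noting one may run the argument with $2\delta$ in place of $\delta$, recovers the stated $e^{-\delta\tau}$). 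The main obstacle I anticipate is the bookkeeping in the middle step: verifying carefully that the perturbed weighting $w_1\circ w_2$ still satisfies \emph{all} the defining inequalities of $\WW_{\delta/2}$ — this requires knowing that the relevant separations and interlacing gaps for the concatenated network are controlled by the $L_\T$-data of $w_1,w_2$ together with their proximity to $\hat w_1,\hat w_2$, which is where the precise structure of $\WW_\delta$ (matching \eqref{wcond1}–\eqref{wcond2} for the big network) is essential.
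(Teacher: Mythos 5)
Your proposal follows essentially the same route as the paper's proof: apply Proposition~\ref{onemprop} separately to $A_1$ and $A_2$ to produce $w_i,\phi_i$, verify that $w_1\circ w_2\in\WW_{\delta/2}$ for $\tau$ large (a step the paper merely asserts and you spell out via the perturbation of the defining inequalities of $\WW_\delta$), and then combine the concatenation identity \eqref{concat} with Proposition~\ref{limit} applied to $\Gamma_0\circ\Gamma_0$ to control $l^\B(A_1A_2)$. The argument is correct (modulo the same harmless factor of $2$ in the exponent that the paper itself elides), so nothing further is needed.
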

\begin{proof}
 Indeed, using Proposition \ref{onemprop}, we can conclude that there
are weights $w_1,w_2 \in \overline{W}(\Gamma_0, \R)$  and
 $\phi_1,\phi_2\in \overline{\Phi}(\Gamma_0)$  such that $M(\Gamma;\tau
w_1,\phi_1)=A_1$ and $M(\Gamma;\tau w_2,\phi_2)=A_2$ with
\[ \left|\frac1\tau L_\B(A_1)- L_\T(w_1)\right|<ce^{-\delta\tau}
, \hskip 0.3cm \left|\frac1\tau L_\B(A_2)- L_\T(w_2)\right|<ce^{-\delta\tau}. \]
Since $(\Xi_1, \Xi_2) \in \Sigma(\delta)$, for $\tau$ sufficiently large we have 
$w_1 \circ w_2 \in \WW_{\delta/2}$.

Then the equality 
\[      M(\Gamma;\tau w_1,\phi_1)\cdot M(\Gamma;\tau w_2,\phi_2) = M(\Gamma;\Gamma;\tau w_1\circ w_2,\phi_1\circ \phi_2).
\]
together with  Proposition \ref{limit} yields
\[\left|\frac1\tau L_\B(A_1\cdot A_2)-L_\T(w_1\circ w_2)\right|<ce^{-\delta\tau},\]
which clearly implies \eqref{hineq}.
\end{proof}
\begin{corollary}\label{hornconecor}
  $\ctrop\subset\coneb$
\end{corollary}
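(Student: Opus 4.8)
The plan is to deduce the inclusion $\ctrop \subset \coneb$ directly from Proposition \ref{proph} by a density and limiting argument, in the spirit of Corollary \ref{tropinr}. Recall that $\ctrop$ is by definition the set of triples $(r,s,t) \in \R^{3n}$ arising as $(m^\T(w_1), m^\T(w_2), m^\T(w_1 \circ w_2))$ for weightings $w_1, w_2 \in W(\Gamma_0, \T)$. First I would reduce to the case where the two weightings lie in $\overline W(\Gamma_0, \R)$ and their concatenation lies in the good set $\WW_\delta$ for some $\delta > 0$: since the conditions defining $\WW_\delta$ and the membership $L_\T(w_i) \in \conegz(\delta)$ are violated only on a finite union of hyperplanes (perturbing a horizontal weight off $-\infty$, or off a wall), any point of $\ctrop$ is a limit of points $H_\T(w_1,w_2)$ with $(L_\T(w_1), L_\T(w_2)) \in \Sigma(\delta)$ for $\delta$ small; and $\coneb$ is closed, so it suffices to treat such points. (Here one uses that $L_\T$ restricted to $\conez$ is the linear isomorphism onto $\conegz$ from Proposition \ref{ourcone}, so I may assume $w_i \in \conez(\delta)$ and set $\Xi_i = L_\T(w_i)$.)

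Next, fix such a point $(r,s,t) = H_\T(w_1,w_2)$ with $(\Xi_1,\Xi_2) = (L_\T(w_1), L_\T(w_2)) \in \Sigma(\delta)$. For $\tau \geq \tau_0(\delta)$, pick any $A_1 \in L_\B^{-1}(\tau\Xi_1)$ and $A_2 \in L_\B^{-1}(\tau\Xi_2)$; these preimages are nonempty because $\tau\Xi_i$ lies in the interior of $\conegz$, which is the image of $L_\B$ by Proposition \ref{gzprop}. Applying Proposition \ref{proph} to the pair $(A_1,A_2)$ produces weightings $w_1', w_2'$ and phases $\phi_1,\phi_2$ with $M(\Gamma_0;\tau w_i',\phi_i) = A_i$ and, crucially, the estimate
\begin{equation*}
\left|\tfrac1\tau H_\B(A_1,A_2) - H_\T(w_1',w_2')\right| \leq c\, e^{-\delta\tau}.
\end{equation*}
By construction $(A_1,A_2) \in \B \times \B$, so $\tfrac1\tau H_\B(A_1,A_2) = \tfrac1\tau (l^\B(A_1), l^\B(A_2), l^\B(A_1 A_2))$, which is an element of $\coneb$ since $\coneb$ is scale-invariant. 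On the other hand, the proposition gives $w_1'\circ w_2' \in \WW_{\delta/2}$, and tracking the construction in Proposition \ref{onemprop}/\ref{proph} one sees $w_i'$ is within $c\, e^{-\delta\tau/2}$ of the canonical preimage $L_\T^{-1}(\Xi_i) = w_i$; hence $H_\T(w_1', w_2') \to H_\T(w_1,w_2) = (r,s,t)$ as $\tau \to \infty$ by continuity of $m^\T$. Combining, $(r,s,t)$ is a limit of elements of $\coneb$, and since $\coneb$ is closed we conclude $(r,s,t) \in \coneb$.

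The main obstacle — and the reason the argument has to be phrased as a limit rather than an exact identity — is that $m^\T_i$ and $l^\B_i$ agree only asymptotically after rescaling by $\tau$, with an error that is merely exponentially small, never zero; so one cannot hope to realize a tropical triple exactly by a single pair $(A_1, A_2)$. One must therefore be careful that, as $\tau \to \infty$, the weightings $w_i'$ returned by the proposition genuinely converge back to the prescribed $w_i$ rather than drifting; this is exactly what the neighborhood statement following Proposition \ref{onemprop} guarantees, via the homological fixed-point-type argument that $M(U_\varepsilon \times \overline\Phi)$ captures the entire connected fiber $L_\B^{-1}(\tau\Xi)$. A minor secondary point is ensuring the various "good" sets $\conez(\delta), \conegz(\delta), \Sigma(\delta)$ exhaust $\ctrop$ in the limit $\delta \to 0$; this is immediate since their complements are contained in $\delta$-neighborhoods of finitely many hyperplanes, and $\ctrop$ is the closure of the union of the interiors. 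With these pieces in place the corollary follows.
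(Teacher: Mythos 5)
Your argument is essentially the paper's own: approximate a tropical triple $H_\T(w_1,w_2)$ by rescaled multiplicative triples $\frac1\tau H_\B(A_1,A_2)$ via Proposition \ref{proph}, after first perturbing into the good set $\Sigma(\delta)$, and then pass to the limit $\tau\to\infty$, $\delta\to 0$. The one step you assert twice without justification is that $\coneb$ is closed: this is not apparent from the definition \eqref{coneb} (an image of a continuous map need not be closed), and the limiting argument by itself only gives $\ctrop\subset\overline{\coneb}$. The paper closes this gap by invoking Klyachko's theorem (Theorem \ref{klyachko}), which identifies $\coneb$ with the polyhedral, hence closed, cone $\coneho$; you should make this dependence explicit.
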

Indeed, using Proposition \ref{proph}, we can prove that the smallest
closed cone containing $\coneb$ contains $\ctrop$ exactly in the same way
as we deduced Corollary \ref{tropinr} from Proposition
\ref{limit}. Now, it follows from Klyachko's theorem that
$\coneb$ is itself a closed cone, and hence we can conclude that
$\ctrop\subset\coneb$.

\section{Poisson Geometry and Duistermaat--Heckman measures}\label{poissondh}

\subsection{The Gelfand--Zeitlin system on Hermitian matrices}
Recall that the set of Hermitian matrices $\mathcal{H}$ can be naturally identified with the dual of the Lie algebra $\uu(n)$  by means of the nondegenerate pairing
$$
\langle a, \xi \rangle = {\rm Im} \, {\rm Tr}(a \xi),
$$
where $a \in \mathcal{H}$ and $\xi \in \uu(n)$ (viewed as a
skew-Hermitian matrix). Since $\mathcal{H} \cong \uu^*(n)$, it carries a
linear Kirillov--Kostant--Souriau (KKS) Poisson bracket
$\pi_\H$. Symplectic leaves under this bracket are formed by matrices
with fixed eigenvalues:
$$
\mathcal{H}_r = \{ a \in \mathcal{H}; \,\, l(a)=r\},\text{ where }r \in \R^n.
$$

\begin{ex}
  Consider the case of $n=2$. The space of Hermitian 2-by-2 matrices
  is isomorphic to $\R^4$,
$$
(x,y,z,t) \mapsto
\left(
\begin{array}{ll}
t+z & x+iy \\
x-iy & t -z 
\end{array}
\right).
$$
Under the KKS bracket, $t$ is a Casimir function (i.e., belongs to the
Poisson center), and brackets of the other variables take the form
$$
\{ x, y\} =z,\ \{ y, z\} =x,\ \{z, x\} =y.
$$
The symplectic leaves are either points (if $x=y=z=0$) or 2-spheres:
$$
\mathcal{H}_{(r_1, r_2)} =
\left\{
\left(
\begin{array}{ll}
t+z & x+iy \\
x-iy & t -z 
\end{array}
\right); \,\, r_1=t + \sqrt{x^2+y^2+z^2}, r_2 =2t
\right\} .
$$

\end{ex}

Recall the defitnition of the Gelfand--Zeitlin map $L_\H: \mathcal{H}
\to \R^\nabla$ from Section \ref{gzsie}. The following theorem is
due to Guillemin and Sternberg \cite{guillemin}:

\begin{theorem}\label{integrable}
  The map $L_\H: (\mathcal{H}, \pi_{{\rm KKS}}) \to (\R^N, 0)$ is a
  Poisson map. Its components $l_i=l^{n}_i$ are Casimir
  functions. For $k <n$, the functions $l^{k}_i$ generate a densely defined action
  of a torus of dimension $n(n-1)/2$.

Over each symplectic leaf $\mathcal{H}_r$ with $r\in\R^n$, the map $L_\H$ defines a
completely integrable system in the sense of Liouville--Arnold, i.e.
$$
\{ l^{k}_i, l^{m}_j\} =0,
$$
and the number of independent functions is equal to ${\rm dim}\, \mathcal{H}_r/2$.
\end{theorem}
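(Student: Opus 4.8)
The plan is to assemble the statement from three ingredients: (i) the fact that, in the standard matrix coordinates, $\mathcal{H}\cong\uu^*(n)$ carries the linear KKS bracket, under which the entries of the principal $k$-by-$k$ submatrix $A^{(k)}$ of $A$ close onto themselves and reproduce the KKS bracket of $\uu^*(k)$; (ii) the $\mathrm{U}(k)$-invariance of the functions $l^k_i$, which are (up to sign) the coefficients of the characteristic polynomial of $A^{(k)}$, equivalently elementary symmetric functions of its eigenvalues; (iii) a dimension count on the symplectic leaf $\mathcal{H}_r$. First I would observe that $l^k_i=l_i(A^{(k)})$ is a polynomial in the entries of $A^{(k)}$ alone, hence under the KKS bracket it Poisson-commutes with any $\mathrm{U}(m)$-invariant function of $A^{(m)}$ whenever $m\le k$: indeed, for $m\le k$, the bracket $\{f(A^{(k)}),g(A^{(m)})\}$ can be computed inside the sub-Poisson-space $\uu^*(k)$, where $g(A^{(m)})$ generates the coadjoint action of $\mathrm{U}(m)\subset\mathrm{U}(k)$, and $f(A^{(k)})=f(\lambda(A^{(k)}))$ is $\mathrm{U}(k)$-invariant, a fortiori $\mathrm{U}(m)$-invariant; therefore its bracket with the generator of that action vanishes. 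This gives $\{l^k_i,l^m_j\}=0$ for all $k,m,i,j$ in one stroke. Since $l_i=l^n_i$ generates the coadjoint action of $\mathrm{U}(n)$ on all of $\mathcal{H}$ and these are $\mathrm{Ad}^*$-invariant, they are Casimirs of $\pi_{\mathrm{KKS}}$, which also makes $L_\H$ Poisson onto $(\R^N,0)$.

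Next I would carry out the dimension count. On a generic coadjoint orbit $\mathcal{H}_r$ (distinct eigenvalues), $\dim\mathcal{H}_r = n^2 - n = n(n-1)$, so a Liouville-integrable system needs $n(n-1)/2$ functionally independent Poisson-commuting functions. The functions $l^k_i$ with $1\le i\le k\le n-1$ are the candidates; there are $1+2+\dots+(n-1)=n(n-1)/2$ of them (excluding the $k=n$ row, which is constant on $\mathcal{H}_r$). Functional independence on a dense open subset follows from the interlacing inequalities \eqref{eq:GZ}: on the interior of the Gelfand--Zeitlin cone the eigenvalues of all principal submatrices are strict and the differentials $dl^k_i$ are linearly independent — one can see this e.g.\ by exhibiting, for a generic point, the Hamiltonian torus action whose orbits have the right dimension, or by a direct Jacobian computation using that the $\lambda^k_i$ are coordinates transverse to the level sets. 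The torus action itself is then produced by the standard argument of Guillemin--Sternberg: each $l^k_i$ (for $k<n$) is the moment map component for a circle action defined wherever its Hamiltonian flow is periodic — this periodicity comes from the fact that the flow is, up to reparametrization, rotation in the $\mathrm{U}(k)$-direction that diagonalizes $A^{(k)}$ — and these circle actions commute by the bracket relations just established, assembling into a densely defined $\mathrm{T}^{n(n-1)/2}$-action.

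The main obstacle is the dense definedness and the verification of functional independence: the torus action is genuinely only defined on the open set where all relevant eigenvalue multiplicities are trivial (the interior of $\conegz$ in the image), and one must be careful that the Hamiltonian flows of the $l^k_i$ are complete and periodic there; on the lower-dimensional strata of degenerate spectra the flows can fail to be periodic, and the functions can fail to be independent, so the statement is inherently a ``generic'' one. I would handle this exactly as in \cite{guillemin}: restrict attention to the open dense subset $L_\H^{-1}(\mathrm{int}\,\conegz)$, check there that the flows generated by the $l^k_i$ are the obvious compact-group rotations hence periodic and complete, and conclude that $L_\H$ restricted to $\mathcal{H}_r$ is a completely integrable system in the Liouville--Arnold sense, with $n(n-1)/2 = \dim\mathcal{H}_r/2$ independent commuting integrals. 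Everything else — the Poisson property of $L_\H$ and the Casimir property of the $l_i$ — is formal once the bracket computation (i) is in place, so I would present (i) carefully and treat (ii)–(iii) more briefly, citing \cite{guillemin} for the periodicity details.
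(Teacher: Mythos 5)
The paper does not prove this theorem: it is quoted as a known result and attributed to Guillemin and Sternberg \cite{guillemin}, so there is no internal argument to compare against. Your sketch is the standard one (the Thimm trick): the projection $A\mapsto A^{(k)}$ is the Poisson map $\uu^*(n)\to\uu^*(k)$ dual to the inclusion of Lie algebras, invariant functions of $A^{(k)}$ are Casimirs of the intermediate bracket and hence commute with anything pulled back through $\uu^*(k)$, the $l^n_i$ are $\mathrm{Ad}^*$-invariant hence global Casimirs, and the count $1+2+\dots+(n-1)=n(n-1)/2=\dim\mathcal{H}_r/2$ on a generic leaf closes the argument. This is correct in outline and is exactly what \cite{guillemin} does; what your write-up buys over the bare citation is an explicit reason for the commutation relations, at the cost of having to handle the genericity issues yourself, which you correctly identify as the real work (periodicity of the flows and independence of the differentials only on $L_\H^{-1}(\mathrm{int}\,\conegz)$).

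One inaccuracy to fix: $l^k_i=\lambda_1+\dots+\lambda_i$ for $i<k$ is \emph{not} a polynomial in the entries of $A^{(k)}$ — only the elementary symmetric functions of the eigenvalues (the characteristic polynomial coefficients) are; the partial sums of ordered eigenvalues are merely continuous on $\H$ and smooth only where $\lambda_i>\lambda_{i+1}$. This does not break the argument, since the commutation computation needs only that $l^k_i$ is a $\mathrm{U}(k)$-invariant function depending on $A^{(k)}$ alone and smooth on the dense open set where the interlacing inequalities are strict (alternatively, one first proves commutativity for the polynomial invariants $\sigma_j(A^{(k)})$ and then passes to the $l^k_i$ as smooth functions of these on that set). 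But as written, the sentence asserting polynomiality is false and should be reworded.
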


For generic $r$, the symplectic form on $\mathcal{H}_r$ is given by
\begin{equation} \label{eq:symplectic}
\omega_r = \sum_{k=1}^{n-1} \sum_{i=1}^k d l^{k}_i \wedge d \phi^{k}_i,
\end{equation}
where $\phi^{(k)}_i$'s are some linear 
combinations (with integer coefficients) of the angles defining the torus action. In these coordinates, the Liouville volume form is expressed as
$$
\LL_r = \prod_{k=1}^{n-1} \prod_{i=1}^k dl^{k}_i \wedge d\phi^{k}_i .
$$
Denote by $P_r=L_\H(\mathcal{H}_r)$ the image of $\mathcal{H}_r$ under
the map $L_\H$. This is a convex polytope defined by the interlacing
inequalities and it carries a natural measure $(L_\H)_* \LL_r$ which is equal
to the Lebesgue measure on $P_r$:
$$
(L_\H)_* \LL_r = \chi_{P_r} \,  \prod_{k=1}^{n-1} \prod_{i=1}^k dl^{k}_i .
$$
Here $\chi_{P_r}$ is the characteristic function of $P_r$. Sometimes it
is more convenient to consider the normalized measure
$$
\mu_r = \frac{1}{{\rm vol}(P_r)} \, (L_\H)_* \, \LL_r.
$$

Let $\tau \in \R_+$, and denote by $R_\tau: \R^n \to \R^n$ the
dilation map $R_\tau: r \to \tau r$. Then we have $P_{\tau r} =
\tau P_r$ and
$$
\mu_{\tau r} = (R_\tau)_* \mu_r.
$$

\begin{ex}
For the case of $n=2$, the components of the map $L_\H$ are as follows
$$
l^{2}_1 = t+ \sqrt{x^2+y^2 +z^2}, \hskip 0.2cm l^{2}_2 = 2t, \hskip 0.2cm l^{1}_1 = t-z.
$$
Restricting to the symplectic leaf $\mathcal{H}_r$, we fix $l^{2}_1=r_1$ and $l^{2}_2 = r_2$. The polytope $P_r$ is the closed interval $l^{1}_1 \in [ r_2-r_1  , r_1]$. The corresponding normalized measure is
$$
\mu_r = \frac{1}{2r_1 - r_2} \, \chi_{[r_2 - r_1, r_1]} dl^{1}_1 .
$$
It is invariant under scaling $r_i \mapsto \tau r_i,\ l^{1}_1 \mapsto \tau l^{1}_1$.

\end{ex}

\subsection{The Duistermaat--Heckman measure for the Horn problem}

For $r, s \in \R^n$, the symplectic manifold $\H_r \times \H_s$
carries the diagonal Hamiltonian action of $U(n)$ with moment map
$\Psi_\H: (a,b) \to a+b$. By Kirwan's Convexity Theorem, the image
$$
\Pi_{r,s} = l \circ \Psi_\H  \, (\H_r \times \H_s)
$$
is a convex polytope (the Horn polytope). We clearly have
$$
\Pi_{r,s}=\{ t \in \mathbb{R}^n; \,\, (r,s,t) \in \mathcal{C}_\H\} .
$$
The push-forward of the Liouville measure
$$
{\rm DH}_{r,s} = (l \circ \Phi)_* \, (\LL_r \times \LL_s)
$$
is the Duistermaat--Heckman measure on $\Pi_{r,s}$. By the
Duistermaat--Heckman Theorem,  $\mathrm{DH}_{r,s}$ is absolutely continuous with respect
to the Lebesgue measure on $\Pi_{r,s}$, and  the corresponding Radon--Nykodim
derivative is a piece-wise polynomial function, which is strictly
positive on the interior of $\Pi_{r,s}$. We can again define the
normalized measure
$$
\mu_{r,s}=\frac{1}{{\rm vol}(P_r) {\rm vol}(P_s)} \, {\rm DH}_{r,s},
$$
with the obvious scaling property:
$$
\Pi_{\tau r, \tau s}=\tau \Pi_{r,s}, \hskip 0.3cm \mu_{\tau r,\tau s}=(R_\tau)_* \mu_{r,s}.
$$

\begin{ex}
Let $n=2$ and choose $r_1=r,\, r_2=0,\, s_1=s$, and $s_2=0$. Then it is easy to check that the Horn polytope is of the form
$$
\Pi_{r,s} = \{ (t,0) \in \R^2; \,\, |r-s| \leq t \leq r+s \}.
$$
It is equipped with the normalized measure
$$
\mu_{r,s} = \frac{1}{2rs}\,  \chi_{[|r-s|, r+s]} \, tdt.
$$
This measure is invariant under scaling $r\mapsto \tau r, s \mapsto \tau s, t\mapsto \tau t$.

\end{ex}

\subsection{The Gelfand--Zeitlin system on the group $\B=U^*(n)$}\label{gzonb}
The group $\B$ carries a natural action of the group $U(n)$.
Recall that the Iwasawa decomposition
\[ g = A u,\;\mathrm{for}\; g \in \mathrm{Gl}(n, \C), A \in \B,\ \mathrm{and}\ u \in U(n),
\]
gives rise to the identification
$$
\B \cong \glnc/\Un
$$
of the group $\B$ with a homogeneous space. This presentation defines
 a natural action of $\glnc$ on $\B$ by
multiplication on the left. The restriction of this action to the
subgroup $\Un$ is called the {\em dressing action}. For $x \in
u(n)$ we denote by $\xi_x$ the corresponding fundamental
vector field on $\B$.

The group $\B$ has a canonical multiplicative Lu--Weinstein
Poisson structure~$\pi_\B$, that is defined as follows. Let $dA\,A^{-1}$ be
the right-invariant Maurer--Cartan 1-form on $\B$ with values in the
Lie algebra ${\rm Lie}(\B)$.  There is a canonical pairing between ${\rm Lie}(\B)$
and $\mathfrak{u}(n)$ given by
\[
\langle \xi, x \rangle = {\rm Im} \, {\rm Tr}(\xi x).
\]
The bivector $\pi_\B$ is the unique bivector on $\B$ such that
\[
\pi_\B( \langle dA\,A^{-1}, x \rangle, \cdot) = \xi_x.
\]
Note that for $x$ a diagonal skew-Hermitian matrix, we have
\[
\langle dA\,A^{-1}, x \rangle = d \, \langle \log(A_d), x\rangle,
\] 
where $A=A_d N(A)$ with $A_d$ a diagonal matrix and $N(A)$ a unipotent
upper-triangular matrix.  Hence, the action of the Cartan subgroup of
$U(n)$ consisting of unitary diagonal matrices is Hamiltonian with the
moment map $\Psi(A)= \log(A_d)$.

\begin{ex}
For $n=2$, we have a parametrization
$$
A =
\left(
\begin{array}{ll}
y & z \\
0 & y^{-1} 
\end{array}
\right),
$$
with $y \in \R_+$ and $z \in \C$. The Poisson brackets read
$$
\{ y, z\}_\B = \frac{i}{2} \, yz, \hskip 0.2cm \{ y, \overline{z}\}_\B
= - \frac{i}{2} \, y\overline{z}, \hskip 0.2cm \{ z,
\overline{z}\}_\B= i(z^2 - \overline{z}^2).
$$
The dressing action of the diagonal circle ${\rm diag}(\exp(i\theta),
\exp(-i\theta))$ is given by $y \mapsto y, z\mapsto
z\exp(2i\theta)$. The moment map for this action is $\Psi(A)=\log(y)$.
\end{ex}

Symplectic leaves of the Poisson structure $\pi_\B$ are orbits of the
dressing action and at the same time fibers of the map $l^\B$. For $r
\in \R^n$, we denote by $\B_r = (l^\B)^{-1}(r)$ the
corresponding symplectic leaf.
The leaf $\B_r$
consists of matrices $A \in \B$ such that the eigenvalues of $AA^*$
are given by $(\exp(r_1), \dots, \exp(r_n))$.

Similarly to the case of Hermitian matrices, the map $L_\B$ defines a
completely integrable system on each leaf $\B_r$. The image
$L_\B(\B_r)$ is the same polytope $P_r$ defined by the interlacing
inequalities. Moreover, in action-angle variables $(l^{k}_i,
\phi^{k}_i)$ the symplectic form is again described by equation
\eqref{eq:symplectic} and the induced normalized measure on $P_r$ is
again the measure $\mu_r$ (see \cite{FRpreprint} for details).

\subsection{Duistermaat--Heckman measure for the multiplicative problem}
For a pair of symplectic leaves $\B_r$ and $\B_s$, the space $\B_r \times \B_s$ carries the dressing action of $U(n)$ defined by $g: (A,B) \mapsto (A', B')$, where
$$
gA = A' g',\ g'B=B'g''
$$
with $g,g',$ and $g''$ in $U(n)$ and $A,A',B,B' \in \B$. This action has a
moment map in the sense of Lu, $\Psi(A, B)=AB$.  By the Klyachko
Theorem, the composition map $l^\B \circ \Psi$ sends $\B_r \times
\B_s$ to the same polytope $\Pi_{r,s} \subset \R^n$ as in the case of
Hermitian matrices. Denote the normalized push-forward measure on
$\Pi_{r,s}$ by $\mu^\B_{r,s}$. 
\begin{theorem}  \label{measure}
$$
\mu^\B_{r,s} = \mu_{r,s}.
$$
\end{theorem}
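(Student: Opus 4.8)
The plan is to reduce the equality of Duistermaat--Heckman measures $\mu^\B_{r,s}=\mu_{r,s}$ to a statement about the Poisson--Lie structures involved, using the deformation from the additive (linear) Poisson geometry on $\H$ to the multiplicative Lu--Weinstein Poisson geometry on $\B$. The guiding principle is that the group $\B$, together with its Lu--Weinstein Poisson structure $\pi_\B$, is a Poisson--Lie deformation of the dual Poisson vector space $(\uu^*(n),\pi_{\mathrm{KKS}})\cong(\H,\pi_\H)$: concretely, one introduces a scaling parameter and considers the family $\pi_\B^{(\hbar)}$ which degenerates as $\hbar\to 0$ to the KKS bracket on $\H$, with the diffeomorphism $\exp(2K)=AA^*$ (already used in the introduction to intertwine $l$ and $l^\B$) providing the identification of the underlying manifolds. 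The symplectic leaves $\B_r$ and $\H_r$ correspond under this deformation, and on each leaf the Liouville volumes, pushed forward to the Gelfand--Zeitlin polytope $P_r$, agree with Lebesgue measure by Theorem \ref{integrable} (additive case) and by \cite{FRpreprint} together with the discussion in Section \ref{gzonb} (multiplicative case). Thus the measures $\mu_r$ and $\mu^\B_r$ on $P_r$ literally coincide, not merely up to deformation.

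The key steps, in order, are as follows. First, I would recall the Poisson--Lie framework: the dressing action of $U(n)$ on $\B$ with its Lu-moment map $\Psi(A,B)=AB$ is the Poisson--Lie analog of the Hamiltonian $U(n)$-action on $\H_r\times\H_s$ with classical moment map $(a,b)\mapsto a+b$, and there is a general principle (Flaschka--Ratiu \cite{FRpreprint}, Alekseev--Malkin--Meinrenken circle of ideas) that Poisson--Lie moment maps and ordinary moment maps have the same image and the same pushforward of Liouville measure when connected by such a deformation. Second, I would make this precise by constructing, on $\B_r\times\B_s$, a family of symplectic forms (or Poisson structures) interpolating between the product symplectic structure restricted from $\B\times\B$ and the one on $\H_r\times\H_s$, in such a way that the Liouville volume is preserved along the family --- this uses that the leaves have the same dimension and the same GZ image $P_r\times P_s$. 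Third, since both pushforward measures are obtained by composing $l^\B\circ\Psi$ (resp. $l\circ\Psi_\H$) with the respective symplectic reductions, and since in action--angle coordinates $(l^k_i,\phi^k_i)$ the symplectic form on each leaf is given identically by \eqref{eq:symplectic} in both cases, the pushforward to $\Pi_{r,s}$ depends only on the leaf data, which is common. Fourth, I would invoke the fact (noted in the excerpt) that this theorem's proof is deferred to the Appendix and uses the theory of Poisson--Lie groups, so here I would cite that the dressing-orbit geometry of $\B$ is "Poisson-diffeomorphic" in the appropriate leafwise sense to the coadjoint geometry of $\uu^*(n)$, making the two Duistermaat--Heckman constructions yield the same measure.

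The main obstacle I expect is Step 2: rigorously exhibiting the volume-preserving interpolation between the multiplicative and additive symplectic structures on the product of leaves. The subtlety is that while each individual leaf $\B_r$ carries a Liouville measure matching that of $\H_r$, the product symplectic form on $\B_r\times\B_s$ involves the Poisson--Lie structure in an essential way (the moment map $\Psi(A,B)=AB$ is group-valued, not Lie-algebra-valued), and one must check that the reduced spaces at each $t\in\Pi_{r,s}$ — symplectic quotients $(\B_r\times\B_s)/\!/_t U(n)$ versus $(\H_r\times\H_s)/\!/_t U(n)$ — are symplectomorphic, or at least have equal symplectic volume, compatibly as $t$ varies. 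This is where the Poisson--Lie machinery (the Ginzburg--Weinstein diffeomorphism $U^*(n)\cong\uu^*(n)$, intertwining dressing and coadjoint actions and pulling $\pi_\B$ back to a Poisson structure with the same symplectic foliation and leafwise volumes as $\pi_{\mathrm{KKS}}$ up to the normalization already absorbed into $\mu_r=\mu^\B_r$) does the heavy lifting, and why the authors relegate the argument to an appendix. Once the reduced spaces are matched, the Duistermaat--Heckman measures, being the functions $t\mapsto\vol\big((\B_r\times\B_s)/\!/_t U(n)\big)$ and $t\mapsto\vol\big((\H_r\times\H_s)/\!/_t U(n)\big)$, coincide, which is exactly $\mu^\B_{r,s}=\mu_{r,s}$.
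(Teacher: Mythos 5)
You have located the right circle of ideas --- linearization of Poisson--Lie group actions and the Ginzburg--Weinstein-type diffeomorphism $\B\cong\uu^*(n)$ --- and you have also correctly diagnosed where the real difficulty sits. But the difficulty you flag in your ``Step 2'' is not resolved in your proposal, and it is precisely the content of the paper's argument. The paper does not use a one-parameter interpolation $\pi_\B^{(\hbar)}$ at all. It uses two exact symplectomorphisms: first, the Linearization Lemma of \cite{alekseev} (Theorem \ref{linearization}), which for a single Lu-Hamiltonian $G$-space $(M,\omega,\Psi)$ produces a modified form $\varpi=\omega-\psi^*\nu$ making $(M,\varpi,\psi=\mathrm{Exp}^{-1}\circ\Psi)$ an ordinary Hamiltonian space, \emph{together with} an explicit map $u_M(m)=u(\psi(m))\cdot m$ which is a symplectomorphism from $(M,\varpi)$ to $(M,\omega)$ and only moves points within coadjoint-projection fibers; this gives $\Dh(M,\omega,\Psi)=\Dh_0(M,\varpi,\psi)$ (Lemma \ref{lemma:lin}). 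Second --- and this is the ingredient your proposal is missing --- Theorem 3.4 of \cite{amw} (``linearization commutes with products'', Theorem \ref{thm:lincommutes}): the linearization of $(M_1\times M_2,\ \omega_1+\omega_2,\ \Psi_1\Psi_2)$ is $G$-equivariantly symplectomorphic to $(M_1\times M_2,\ \varpi_1+\varpi_2,\ \psi_1+\psi_2)$, by a symplectomorphism intertwining $\mathrm{Exp}^{-1}\circ(\Psi_1\Psi_2)$ with $\psi_1+\psi_2$. Applied to $M_1=\B_r\cong\H_r$, $M_2=\B_s\cong\H_s$, this identifies the Duistermaat--Heckman measure of the group-valued moment map $(A,B)\mapsto AB$ with that of the additive moment map $(a,b)\mapsto a+b$, which is exactly $\mu^\B_{r,s}=\mu_{r,s}$.

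Without this product theorem, your argument has a genuine gap: knowing that each leaf separately satisfies $\mu_r=\mu^\B_r$ on $P_r$ says nothing about the pushforward under $l^\B(AB)$ versus $l(a+b)$, because the two moment maps are not matched by the leafwise identifications. Your fallback --- matching the reduced spaces $(\B_r\times\B_s)/\!/_t\,U(n)$ and $(\H_r\times\H_s)/\!/_t\,U(n)$ for each $t$ --- is equivalent in difficulty to the statement being proved, and your proposed volume-preserving deformation in $\hbar$ would itself require an argument for why the Duistermaat--Heckman measure is constant along the family (it is not a priori even clear the moment map images agree for intermediate $\hbar$). So the proposal is the right strategy with the decisive lemma left as an acknowledged black box; to complete it you must invoke (or reprove) the Alekseev--Meinrenken--Woodward theorem that linearization commutes with products.
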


We give a proof of this theorem in  Appendix.

An important corollary of this Theorem is the scaling invariance of $\mu^\B_{r,s}$:
$$
\mu^\B_{\tau r, \tau s} = (R_\tau)_* \mu^\B_{r,s},
$$
which follows from the analogous (obvious) property $\mu_{r,s}$.

\section{Comparison of the multiplicative and tropical Horn problems}

In this section, we put all the elements of our argument together, and
prove the equivalence of the multiplicative and tropical Horn
problems.

For $r,s \in \R^{n}$, define the polytope
$$
\Pi^\T_{r,s} = \{ t \in \R^{n}; \,\, (r,s,t) \in \ctrop\}.
$$
Since $\ctrop \subset \mathcal{C}_\B=\mathcal{C}_{{\rm Horn}}$, we
have $\Pi^\T_{r,s} \subset \Pi_{r,s}$. We introduce the exceptional set
$$
X_{r,s} = \Pi_{r,s} \backslash \Pi^\T_{r,s}.
$$
Clearly, showing that $\ctrop=\coneb$ is equivalent to showing that
$X_{r,s}$ is empty.

\begin{theorem}\label{measurezero}
$$
\mu_{r,s}(X_{r,s})=0.
$$
\end{theorem}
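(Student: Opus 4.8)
The strategy is a measure-concentration argument combined with the tropical estimates of Section~\ref{sec:tropical} and the two compatibility results for pushforward measures (Theorems~\ref{integrable}, \ref{measure}, together with the analogous statement on $\B$ from Section~\ref{gzonb}). The key point is that the exceptional set $X_{r,s}$, if nonempty, would have to be ``fed'' by preimages in $\B_r\times\B_s$ under the map $l^\B\circ\Psi$, and I want to show that the total mass of the fibers over $X_{r,s}$ is negligible. Since $\mu_{r,s}=\mu^\B_{r,s}$ is the \emph{normalized} pushforward of $\mu_r^\B\times\mu_s^\B$, and the latter pushes forward via $L_\B\times L_\B$ to the Lebesgue measure on $P_r\times P_s$, I will estimate the Lebesgue measure (in $\R^\nabla\times\R^\nabla$) of the set of Gelfand--Zeitlin data $(\Xi_1,\Xi_2)$ whose images under $H_\B$ can land in $X_{r,s}$.

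First I would fix $\delta>0$ and consider $\Sigma(\delta)\subset\R^\nabla\times\R^\nabla$ as in Section~\ref{sec:tropical}. Scaling by $\tau$, Proposition~\ref{proph} says: for $(\Xi_1,\Xi_2)\in\Sigma(\delta)$ and any $A_1\in L_\B^{-1}(\tau\Xi_1)$, $A_2\in L_\B^{-1}(\tau\Xi_2)$, the point $\frac1\tau\,l^\B(A_1A_2)$ lies within $ce^{-\delta\tau}$ of $m^\T(\Gamma_0\circ\Gamma_0,w_1\circ w_2)$ for some tropical weights with $L_\T(w_i)$ close to $\Xi_i$; in particular $\frac1\tau\,l^\B(A_1A_2)\in\Pi^\T_{\frac1\tau\cdot\,,\,\cdot}$ up to an exponentially small error, i.e. it lies within $ce^{-\delta\tau}$ of $\Pi^\T_{r,s}$ (using the scaling $\Pi^\T_{\tau r,\tau s}=\tau\Pi^\T_{r,s}$ coming from $\tau\ctrop=\ctrop$). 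Hence the preimage (under $l^\B\circ\Psi$, restricted to the leaf $\B_{\tau r}\times\B_{\tau s}$) of the $ce^{-\delta\tau}$-\emph{exterior} of $\tau\Pi^\T_{r,s}$ meets only the part of $\B_{\tau r}\times\B_{\tau s}$ whose Gelfand--Zeitlin image $(L_\B\times L_\B)$-data $(\tau\Xi_1,\tau\Xi_2)$ lies \emph{outside} $\tau\Sigma(\delta)$. Therefore
$$
\mu^\B_{\tau r,\tau s}\bigl(\tau\Pi_{r,s}\setminus\text{($ce^{-\delta\tau}$-nbhd of }\tau\Pi^\T_{r,s})\bigr)\;\le\;\frac{\vol\bigl((P_{\tau r}\times P_{\tau s})\setminus\tau\Sigma(\delta)\bigr)}{\vol(P_{\tau r})\vol(P_{\tau s})}\;=\;\frac{\vol\bigl((P_r\times P_s)\setminus\Sigma(\delta)\bigr)}{\vol(P_r)\vol(P_s)},
$$
the equality by homogeneity of Lebesgue measure. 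Rescaling by $R_{1/\tau}$ and using $\mu^\B_{\tau r,\tau s}=(R_\tau)_*\mu^\B_{r,s}$, I get, for every $\tau\ge\tau_0(\delta)$,
$$
\mu_{r,s}\bigl(\Pi_{r,s}\setminus(\tfrac c\tau e^{-\delta\tau}\text{-nbhd of }\Pi^\T_{r,s})\bigr)\;\le\;\frac{\vol\bigl((P_r\times P_s)\setminus\Sigma(\delta)\bigr)}{\vol(P_r)\vol(P_s)}.
$$
Letting $\tau\to\infty$, the left side converges to $\mu_{r,s}(\Pi_{r,s}\setminus\overline{\Pi^\T_{r,s}})$; since $\Pi^\T_{r,s}$ is closed (it is cut out of the closed cone $\ctrop$) this is $\mu_{r,s}(X_{r,s})$. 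So
$$
\mu_{r,s}(X_{r,s})\;\le\;\frac{\vol\bigl((P_r\times P_s)\setminus\Sigma(\delta)\bigr)}{\vol(P_r)\vol(P_s)}\quad\text{for every }\delta>0.
$$
Finally I let $\delta\to0$: the complement of $\Sigma(\delta)$ in $P_r\times P_s$ is contained in a $\delta$-neighborhood of a fixed finite union of affine hyperplanes (the interlacing hyperplanes for each factor plus the finitely many hyperplanes $\alpha(w_1\circ w_2)=\beta(w_1\circ w_2)$, transported to $\R^\nabla\times\R^\nabla$ via the linear isomorphism $L_\T$ on $\conez$), so its volume tends to $0$. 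This forces $\mu_{r,s}(X_{r,s})=0$.

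\textbf{Main obstacle.} The delicate point is the bookkeeping that turns Proposition~\ref{proph} into a statement purely about Gelfand--Zeitlin data: I must check that ``$A_1A_2$ escapes the exponential neighborhood of $\tau\Pi^\T_{r,s}$'' genuinely forces $(\tau\Xi_1,\tau\Xi_2)\notin\tau\Sigma(\delta)$, rather than merely that $(w_1,w_2)\notin\WW_{\delta/2}$ — i.e. I need that the defining inequalities of $\Sigma(\delta)$ (which involve $w_1\circ w_2$, not just $w_1$ and $w_2$ separately) are stable under the exponentially small perturbation $|L_\T(w_i)-\Xi_i|\le ce^{-\delta\tau}$ coming from Proposition~\ref{onemprop}, so that $(\Xi_1,\Xi_2)\in\Sigma(\delta)\Rightarrow w_1\circ w_2\in\WW_{\delta/2}$; this is exactly the continuity/openness built into the $\delta$ versus $\delta/2$ slack in the definitions, and it is already implicitly used inside the proof of Proposition~\ref{proph}. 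The other routine check is that the complement of $\Sigma(\delta)$ really is contained in a $\delta$-neighborhood of \emph{finitely many} hyperplanes of $P_r\times P_s$ with Lebesgue measure $O(\delta)$; this follows because $\WW_\delta$'s complement is, by construction, contained in a $\delta$-neighborhood of finitely many hyperplanes in $W(\Gamma_0\circ\Gamma_0,\R)$, and $L_\T^{\times2}$ restricted to $\conez\times\conez$ is a fixed linear isomorphism onto $\conegz\times\conegz\supset P_r\times P_s$.
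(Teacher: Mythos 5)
Your argument is correct and follows essentially the same route as the paper's proof: apply Proposition~\ref{proph} on $\Sigma(\delta)$, use the Gelfand--Zeitlin pushforward to Lebesgue measure on $P_r\times P_s$ together with Theorem~\ref{measure} and scaling invariance, and bound the exceptional mass by the $O(\delta)$ volume of the complement of $\Sigma(\delta)$. The only cosmetic difference is that you send $\tau\to\infty$ first to absorb the exponential neighborhood of $\Pi^\T_{r,s}$ before letting $\delta\to0$, whereas the paper keeps both $\delta$ and $\varepsilon$ as free parameters and bounds $\mu_{r,s}(X_{r,s})\le c_4\delta+c_5\varepsilon$; the substance is identical.
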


\begin{proof}
Choose $\delta>0$, $\varepsilon >0$ and $\tau>0 $ such that $\tau> \tau_0$ with $\tau_0$ defined in  Proposition \ref{proph} and
$c \exp(-\tau \delta) < \varepsilon$, where $c$ is the maximum of the constants corresponding to the graphs $\Gamma_0$ and $\Gamma_0 \circ \Gamma_0$.

Let
$$
(u, v) \in (P_{r} \times P_{s}) \backslash \Sigma(\delta)
$$
and consider a pair $(A, B) \in \B_{\tau r} \times \B_{\tau s}$ such that $L_\B(A)=\tau u, L_\B(B)=\tau v$.

Then, by Proposition \ref{proph}, there exist weights $w_1, w_2 \in  W(\Gamma_0, \R)$ and $\phi_1, \phi_2 \in \overline{\Phi}(\Gamma_0)\subset \overline{W}(\Gamma_0, U(1))$ such that  $A=M(\Gamma;\tau w_1, \phi_1)$ and $B=M(\Gamma;\tau w_2, \phi_2)$ with
\begin{equation} \label{two}
|L_\T(w_1) -u| < \varepsilon,  \hskip 0.3cm
|L_\T(w_2) -v| < \varepsilon,
\end{equation}
and
\begin{equation} \label{three}
\left| \frac{1}{\tau}\, L_\B(AB) - L_\T(w_1 \circ w_2)\right| < \varepsilon .
\end{equation} 
Denote by $w_u=L_\T^{-1}(u)$ and $ w_v=L_\T^{-1}(v)$ the preimages of $u$ and $v$ under the map $L_\T$.
Since $L_\T$ is a non-degenerate linear map, the inequalities \eqref{two} imply
$|w_u - w_1| \leq c_1 \varepsilon,\ |w_v - w_2| \leq c_1 \varepsilon$ and
$$
|L_\T(w_u \circ w_v) - L_\T(w_1 \circ w_2)| < c_2 \, \varepsilon,
$$
where $c_i$'s are appropriately chosen constants.
Then, combining this with  inequality \eqref{three}, we obtain
$$
\left| \frac{1}{\tau}\, L_\B(AB)- L_\T(w_u \circ w_v)\right| < c_3\,  \varepsilon  ,
$$
and, as a consequence,
$$
\left| \frac{1}{\tau}\, l^\B(AB) - m^\T(w_u \circ w_v)\right| < c_3 \, \varepsilon .
$$

Hence,
$$
l^\B\left( (L_\B \times L_\B)^{-1} (P_{\tau r} \times P_{\tau s}) \backslash \Sigma(\tau \delta) ) \right) 
\subset U_{\tau\varepsilon}(\Pi^\T_{\tau r, \tau s}),
$$
where $U_{\tau\varepsilon}(\Pi^\T_{\tau r, \tau s})$ is the $\tau
\varepsilon$ neighborhood of the polytope $\Pi^\T_{\tau r, \tau
  s}$. This implies that
$$
\mu_{\tau r, \tau s}(X_{\tau r, \tau s}) \leq c_4 \delta + c_5 \varepsilon,
$$
where $c_4$ is the total volume of intersections of the hyperplanes
defining $\Sigma(\delta)$ with $P_{r} \times P_{s}$ and $c_5$ is the
total volume of the boundary of $\Pi^\T_{r, s}$.

 Recall that
$\mu_{\tau r, \tau s}(X_{\tau r, \tau s}) = \mu_{r,s}(X_{r,s})$ which
implies
$$
\mu_{r,s}(X_{r,s}) \leq c_4 \delta + c_5  \varepsilon .
$$
Since the constants $\delta$ and $\varepsilon$ were chosen
arbitrarily, we can conclude that \mbox{$\mu_{r,s}(X_{r,s})=0$}, as required.
\end{proof}

\begin{theorem}\label{empty}
$X_{r,s} = \emptyset$ and $\Pi_{r,s}=\Pi^\T_{r,s}$.
\end{theorem}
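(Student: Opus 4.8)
The plan is to deduce Theorem~\ref{empty} from Theorem~\ref{measurezero} together with the positivity part of the Duistermaat--Heckman theorem and the fact that both $\Pi_{r,s}$ and $\Pi^\T_{r,s}$ are closed polytopes. First I would recall that by Theorem~\ref{measurezero} the exceptional set $X_{r,s}=\Pi_{r,s}\setminus\Pi^\T_{r,s}$ has $\mu_{r,s}$-measure zero. Since $\mu_{r,s}$ is (up to normalization) the Duistermaat--Heckman measure $\mathrm{DH}_{r,s}$, and the Duistermaat--Heckman theorem tells us that the Radon--Nikodym derivative of $\mathrm{DH}_{r,s}$ with respect to Lebesgue measure is strictly positive on the interior of $\Pi_{r,s}$, it follows that $X_{r,s}$ cannot contain any point of the interior $\mathrm{int}(\Pi_{r,s})$: if it did, by $\Pi^\T_{r,s}$ being closed it would contain a whole open ball, which would have positive $\mu_{r,s}$-measure, a contradiction. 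Hence $X_{r,s}\subset\partial\Pi_{r,s}$, i.e.\ $\mathrm{int}(\Pi_{r,s})\subset\Pi^\T_{r,s}$.

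Next I would promote this to the full equality. Taking closures, $\Pi_{r,s}=\overline{\mathrm{int}(\Pi_{r,s})}\subset\overline{\Pi^\T_{r,s}}=\Pi^\T_{r,s}$, where the first equality uses that $\Pi_{r,s}$ is a polytope of full dimension in the relevant affine subspace (equivalently, has nonempty interior there), and the last uses that $\Pi^\T_{r,s}=\{t;(r,s,t)\in\ctrop\}$ is closed because $\ctrop=\conekt$ is a closed (polyhedral) cone by Theorem~\ref{t=kt}. Combined with the already-established inclusion $\Pi^\T_{r,s}\subset\Pi_{r,s}$ (which comes from $\ctrop\subset\coneb=\coneho$, Corollary~\ref{hornconecor} and Klyachko's Theorem~\ref{klyachko}), this gives $\Pi_{r,s}=\Pi^\T_{r,s}$, hence $X_{r,s}=\emptyset$.

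Finally I would note that the equality of slices for all $r,s$ is exactly the statement $\coneb=\ctrop$: a point $(r,s,t)$ lies in $\coneb$ iff $t\in\Pi_{r,s}$ iff $t\in\Pi^\T_{r,s}$ iff $(r,s,t)\in\ctrop$. Together with Corollary~\ref{hornconecor} this completes the proof of the main theorem, and as advertised in the introduction, combining with Theorems~\ref{klyachko} and~\ref{t=kt} reproves the Knutson--Tao theorem.

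The main obstacle, such as it is, is the small subtlety of going from ``measure zero'' to ``empty'': one must invoke the strict positivity of the Duistermaat--Heckman density on the interior (not merely absolute continuity) and the closedness of $\Pi^\T_{r,s}$, and be careful that $\Pi_{r,s}$ is full-dimensional inside the affine hyperplane $\{a_n+b_n=c_n\}$ so that ``interior'' is nonempty and dense. All of these are either standard or already recorded in the excerpt, so the argument is short; the genuine content was in Theorem~\ref{measurezero} and the tropical estimates feeding it.
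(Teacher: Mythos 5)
Your proposal is correct and follows essentially the same route as the paper: both arguments combine Theorem~\ref{measurezero} with the strict positivity of the Duistermaat--Heckman density on the interior of $\Pi_{r,s}$ and the closedness of the two polytopes to conclude that $X_{r,s}$ can contain no interior point and hence must be empty. Your explicit closure step ($\Pi_{r,s}=\overline{\mathrm{int}(\Pi_{r,s})}\subset\Pi^\T_{r,s}$) is just a spelled-out version of the paper's remark that a nonempty difference of closed polytopes must meet the interior.
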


\begin{proof}
  The polytope $\Pi_{r,s}$ is the image of the symplectic manifold
  $\H_r \times \H_s$ under the composition of the moment map $(a,b)
  \to a+b$ with the map $l$. By the Duistermaat--Heckman theorem,
  the induced measure $\mu_{r,s}$ on $\Pi_{r,s}$ is piece-wise
  polynomial and non-vanishing on its interior. Since both $\Pi_{r,s}$
  and $\Pi^\T_{r,s}$ are closed polytopes, their difference $X_{r,s}$
  is either empty or contains points of the interior of
  $\Pi_{r,s}$. The vanishing of the measure $\mu_{r,s}(X_{r,s})=0$
  implies that $X_{r,s}$ contains no points in the interior of
  $\Pi_{r,s}$. Then it must be empty, as required.
\end{proof}
This construction has the following interesting corollary:
\begin{theorem}\label{tropmeas}
Let $\kappa: P_r \times P_s \to \Pi_{r,s}$ be the map defined by 
$$
\kappa(u, v) = m^\T(L_\T^{-1}(u) \, \circ \, L_\T^{-1}(v)).
$$
Then
$$
\kappa_*(\mu_r \times \mu_s) = \mu_{r,s}.
$$
\end{theorem}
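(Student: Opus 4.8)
The plan is to prove the identity by testing both measures against an arbitrary $g\in C(\Pi_{r,s})$: since $\kappa_*(\mu_r\times\mu_s)$ and $\mu_{r,s}$ are finite Borel measures on the compact polytope $\Pi_{r,s}$, establishing $\int g\,d\kappa_*(\mu_r\times\mu_s)=\int g\,d\mu_{r,s}$ for all such $g$ completes the proof. First I would record two preliminary facts. The map $\kappa$ is well-defined and continuous: by Proposition~\ref{ourcone} the restriction of $L_\T$ to $\conez$ is a linear isomorphism onto $\conegz$, hence $L_\T^{-1}$ is affine on $P_r$ and on $P_s$, the concatenation $\circ$ is linear, and $m^\T$ is continuous and piecewise linear; moreover $\kappa(u,v)=m^\T(L_\T^{-1}(u)\circ L_\T^{-1}(v))$ lies in $\Pi^\T_{r,s}=\Pi_{r,s}$ by the definition of $\ctrop$ and Theorem~\ref{empty}. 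Second, by Theorem~\ref{measure} we have $\mu_{r,s}=\mu_{r,s}^\B=(l^\B\circ\Psi)_*(\mu_r^\B\times\mu_s^\B)$, and by complete integrability of the Gelfand--Zeitlin system on $\B$ (Section~\ref{gzonb}) we have $(L_\B)_*\mu_r^\B=\mu_r$ and $(L_\B)_*\mu_s^\B=\mu_s$; all these identities persist after the rescalings $r\mapsto\tau r$.

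The heart of the argument would be a rescaling limit modeled on the proof of Theorem~\ref{measurezero}. I would fix $\delta>0$, and for $\tau\ge\tau_0(\delta)$ as in Proposition~\ref{proph} consider the square whose top map is $l^\B\circ\Psi:\B_{\tau r}\times\B_{\tau s}\to\Pi_{\tau r,\tau s}$, left map is $L_\B\times L_\B$ onto $P_{\tau r}\times P_{\tau s}$, bottom map is $\kappa_\tau:=R_\tau\circ\kappa\circ(R_{1/\tau}\times R_{1/\tau})$, and right map is the identity of $\Pi_{\tau r,\tau s}$. Pushing $\mu_{\tau r}^\B\times\mu_{\tau s}^\B$ through the top yields $\mu_{\tau r,\tau s}=(R_\tau)_*\mu_{r,s}$ by Theorem~\ref{measure} and scaling invariance, while pushing it down and then along $\kappa_\tau$ yields $(\kappa_\tau)_*(\mu_{\tau r}\times\mu_{\tau s})=(R_\tau)_*\kappa_*(\mu_r\times\mu_s)$. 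On the ``good'' set $G_\tau:=(L_\B\times L_\B)^{-1}\bigl(\Sigma(\tau\delta)\cap(P_{\tau r}\times P_{\tau s})\bigr)$, Proposition~\ref{proph}---after using the linear isomorphism $L_\T|_{\conez}$ to replace, at exponentially small cost, the weights it produces by $L_\T^{-1}$ of the Gelfand--Zeitlin points---gives
\begin{equation*}
\bigl|\,l^\B(\Psi(A,B))-\kappa_\tau(L_\B(A),L_\B(B))\,\bigr|\le C\tau e^{-\delta\tau}\quad\text{for }(A,B)\in G_\tau,
\end{equation*}
and by homogeneity of the conditions defining $\WW_\delta$ and of $L_\T$ one has $\Sigma(\tau\delta)=R_\tau\Sigma(\delta)$, so the complement of $G_\tau$ carries $\mu_{\tau r}^\B\times\mu_{\tau s}^\B$-measure equal to $(\mu_r\times\mu_s)\bigl((P_r\times P_s)\setminus\Sigma(\delta)\bigr)\le c_4\delta$, exactly as in the proof of Theorem~\ref{measurezero}.

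Then I would fix $g\in C(\Pi_{r,s})$ and set $g_\tau(t):=g(t/\tau)$ on $\Pi_{\tau r,\tau s}$, so that the modulus of continuity satisfies $\omega_{g_\tau}(s)=\omega_g(s/\tau)$. Combining the two pushforward identities with the change of variables $R_\tau$ shows that $\int g\,d\mu_{r,s}-\int g\,d\kappa_*(\mu_r\times\mu_s)$ equals $\int_{\B_{\tau r}\times\B_{\tau s}}\bigl[g_\tau(l^\B\circ\Psi)-g_\tau(\kappa_\tau\circ(L_\B\times L_\B))\bigr]\,d(\mu_{\tau r}^\B\times\mu_{\tau s}^\B)$. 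Splitting this integral over $G_\tau$ and its complement and applying the two estimates above bounds it by $\omega_g(Ce^{-\delta\tau})+2\|g\|_\infty c_4\delta$; the decisive point is that the factor $\tau$ in the error bound cancels against the dilation built into $g_\tau$. Since the left-hand side does not depend on $\tau$ or $\delta$, I would first let $\tau\to\infty$ (killing the first term because $g$ is uniformly continuous on the compact $\Pi_{r,s}$) and then $\delta\to0$ (killing the second), obtaining $\int g\,d\mu_{r,s}=\int g\,d\kappa_*(\mu_r\times\mu_s)$ for every $g$, hence $\kappa_*(\mu_r\times\mu_s)=\mu_{r,s}$.

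All the geometric content is already available---Theorem~\ref{measure}, Proposition~\ref{proph}, and the identification $\Pi_{r,s}=\Pi^\T_{r,s}$---so I expect the main difficulty to be organisational: making the rescaled square and the equality $\Sigma(\tau\delta)=R_\tau\Sigma(\delta)$ precise, verifying that $\kappa_\tau\circ(L_\B\times L_\B)$ really pushes $\mu_{\tau r}^\B\times\mu_{\tau s}^\B$ forward to $(R_\tau)_*\kappa_*(\mu_r\times\mu_s)$, and checking that the weights furnished by Proposition~\ref{proph} can be swapped for $L_\T^{-1}$ of the Gelfand--Zeitlin data with exponentially small error, so that the estimate on $G_\tau$ is uniform. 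Once the $\tau$-cancellation in $g_\tau$ is isolated, the iterated limit is routine.
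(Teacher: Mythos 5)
Your argument is correct and is essentially the paper's own proof: both rest on the estimate from the proof of Theorem~\ref{measurezero} (exponentially small tropical approximation on the good set $\Sigma(\delta)$, a $c_4\delta$ bound on its complement, scaling invariance of the measures), followed by letting $\tau\to\infty$ and $\delta\to0$. The only cosmetic difference is that you test against continuous functions, whereas the paper compares the two measures on open balls after observing that both are piecewise polynomial; either reduction is standard and the geometric content is identical.
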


\begin{proof}
The measure $\mu_{r,s}$ is piece-wise polynomial by the Duistermaat--Heckman Theorem. The measure
$$
\mu^\T_{r,s}=\kappa_*(\mu_r \times \mu_s)
$$
is the image of the Lebesgue measure on $P_r \times P_s$ under a piece-wise linear map. Hence, $\mu^\T_{r,s}$ is also piece-wise polynomial, and to establish the equality $\mu^\T_{r,s}=\mu_{r,s}$, it is enough to compare them on open balls. Let $B \subset \Pi_{r,s}$ be an open ball. Then, similar to the proof of the previous theorem, we have an estimate
$$
|\mu^\T_{r,s}(B) - \mu_{r,s}(B)| < c_4 \delta + c_6 \varepsilon,
$$
where $c_4$ (as before) is the total volume of intersections of the hyperplanes defining $\Sigma$ with $P_r \times P_s$, and $c_6$ is the volume of the boundary of $B$. As the constants $\delta$ and $\varepsilon$ can be chosen arbitrarily, we conclude that $\mu^\T_{r,s}(B) = \mu_{r,s}(B)$. Hence, $\mu^\T_{r,s}=\mu_{r,s}$, as required.  
\end{proof}

\begin{ex}
Let $n=2$ and $r=(r,0), s=(s,0)$. Then, $P_r=[-r, r], P_s=[-s, s]$. The map $\kappa$ is of the form
$$
\kappa: (x,y) \mapsto {\rm max}(r-y, x+s).
$$
It is easy to see that  ${\rm im}(\kappa)=[|r-s|, r+s] = \Pi_{r,s}$. For $t \in [|r-s|, r+s]$, and its preimage is a union of two segments:
$$
\kappa^{-1}(t)=[-r, t-s] \times \{ r-t\} \, \cup \, \{ t-s\} \times [r-t, s].
$$

\begin{figure}[h]
\begin{tikzpicture}
  \draw [->] (-4,0) -- (4,0) node[right] {$x$};
  \draw [->] (0,-2.5) -- (0,2.5) node[above] {$y$};
  \draw (-3,-1.5) -- (3,-1.5);
  \draw (-3,1.5) -- (3,1.5);
  \draw (-3,-1.5) -- (-3,1.5);
  \draw (3,-1.5) -- (3,1.5);
  \draw (-3,0) node [below left] {$-r$};
  \draw (3,0) node [below right] {$r$};
  \draw (0,-1.5) node [below left] {$-s$};
  \draw (0,1.5) node [above left] {$s$};
  \draw [ultra thick] (-3,-1) -- (2.5,-1) -- (2.5,1.5);
\end{tikzpicture}
\caption{$P_r\times P_s$ and $\kappa^{-1}(t)$.}
\end{figure}
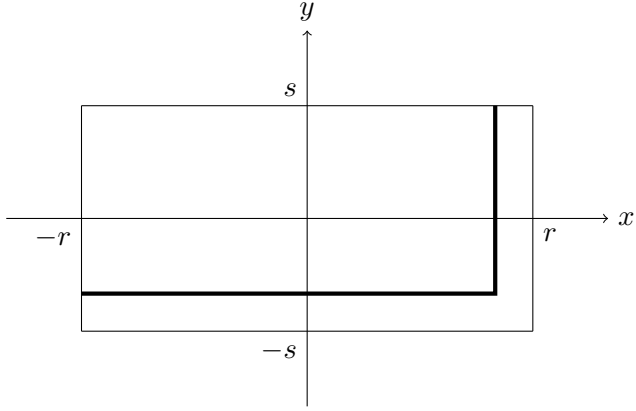

The induced measure on $\Pi_{r,s}$ is given by
\begin{equation*}\begin{split}
&\kappa_*\left(\frac{1}{2r}\chi_{[-r,r]}dx \times \frac{1}{2s}\chi_{[-s,s]}dy\right) = \\ 
&= \frac{1}{4rs}\chi_{[|r-s|,r+s]}\left( (t-s+r) + (s-r+t)\right) dt = \frac{1}{2rs}\chi_{[|r-s|,r+s]}t dt,
\end{split}\end{equation*}
which coincides with the measure $\mu_{r,s}$.
\end{ex}

\section*{Appendix: proof of Theorem \ref{measure}}
\begin{appendix}

In order to prove this theorem, we need to use several facts from symplectic geometry.
First, we recall the Linearization Lemma of \cite{alekseev}.
\begin{theorem}[Linearization Lemma] \label{linearization} Let $G$ be
  a compact connected semisimple Lie group, and $\mathfrak{g}$ its Lie
  algebra. Then there exist a 2-form $\nu\in\Omega^2(\mathfrak g^*)$
  and a map $u:\mathfrak g^*\rightarrow G$ such that for any
  symplectic manifold $(M,\omega)$ endowed with a Poisson $G$-action
  and a map $\Psi:M\rightarrow G^*$, which is a moment map in the sense
  of Lu, the triple $(M,\varpi=\omega-\psi^*\nu,\psi={\rm
    Exp}^{-1}\circ\Psi)$ is a Hamiltonian $G$-space with the moment
  map~$\psi$.

  In addition, in this case, the map $u_M:M\rightarrow M,\
  u_M(m)=u(\psi(m))\cdot m$ is a symplectomorphism between
  $(M,\varpi)$ and $(M,\omega)$.
\end{theorem}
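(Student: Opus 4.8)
The plan is to prove this by a Moser-type deformation argument that continuously interpolates between the Lu--Weinstein Poisson--Lie structure on $G^*$ and the linear Lie--Poisson structure on $\mathfrak g^*$. The essential constraint is that $\nu$ and $u$ must be produced once and for all, depending on $G$ alone; so every piece of deformation data has to be defined intrinsically on the dual group and only afterwards transported to $M$ through the moment map $\Psi$. Throughout, the fixed diffeomorphism $\mathrm{Exp}\colon\mathfrak g^*\to G^*$ is the one identifying $\mathfrak g^*$ with the $\hbar\to 0$ limit of the rescaled dual groups.

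First I would set up the interpolating family. Rescaling the classical $r$-matrix by a parameter $\hbar\in[0,1]$ produces a family of Poisson--Lie structures $\pi_\hbar$ on the manifold $G^*$: at $\hbar=1$ this is the Lu--Weinstein structure, and at $\hbar=0$ it degenerates (after transport by $\mathrm{Exp}$) to the linear structure on $\mathfrak g^*$. For the given Lu-moment-map space $(M,\omega,\Psi)$ one builds a corresponding family of $2$-forms $\omega_\hbar$ on $M$, with $\omega_1=\omega$, such that $\Psi$ is a Lu moment map for $(M,\omega_\hbar)$ with respect to $\pi_\hbar$ and the fixed $G$-action; the $\hbar=0$ member will be $\varpi$. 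Two things must then be verified: that $\omega_\hbar$ stays non-degenerate along the whole interval, and that $\tfrac{d}{d\hbar}\omega_\hbar=d\beta_\hbar$, where the primitive $\beta_\hbar=\Psi^*\gamma_\hbar$ is pulled back from a $G$-invariant $1$-form $\gamma_\hbar$ on $G^*$ that depends only on $G$.

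Next comes Moser's trick. Because the $G$-action is Poisson and $\Psi$ is a Lu moment map, the equation $\iota_{X_\hbar}\omega_\hbar=-\beta_\hbar$ can be solved by a vector field $X_\hbar$ which is the fundamental vector field of a $\mathfrak g$-valued function that factors through $\Psi$, i.e.\ $X_\hbar(m)=\xi_{v_\hbar(\Psi(m))}(m)$ for a map $v_\hbar$ built only from $G^*$. Integrating this time-dependent vector field from $\hbar=1$ to $\hbar=0$ produces a diffeomorphism $u_M$ with $u_M^*\omega=\varpi$; its trajectories lie inside the $G$-orbits, which are relatively compact, so the flow is complete and $u_M$ is globally defined --- this is exactly where compactness of $G$ enters. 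Since $v_\hbar$ and $\gamma_\hbar$ live on $G^*$ and are independent of $M$, the time-$1$ group element, viewed as a function of $m$, factors through $\psi(m)=\mathrm{Exp}^{-1}(\Psi(m))$ and thereby defines the universal map $u\colon\mathfrak g^*\to G$; likewise the total correction $\omega-\varpi$ is the pullback along $\psi$ of a universal $2$-form, which is $\nu$. Finally, that $(M,\varpi,\psi)$ is a Hamiltonian $G$-space is obtained by transporting the Lu moment-map identity for $\Psi$ through $u_M$ and reading off its $\hbar=0$ form, which is precisely the classical identity $\iota_{\xi_x}\varpi=d\langle\psi,x\rangle$ together with $G$-equivariance of $\psi$.

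The main obstacle is the intrinsic computation underlying the second and third steps: showing that $\tfrac{d}{d\hbar}\omega_\hbar$ is exact with a $G$-basic primitive pulled back from $G^*$, and that this primitive is matched by a vector field tangent to the $G$-orbits. This is a direct but intricate calculation with the rescaled Poisson--Lie bivectors and the Lu moment-map axioms; it is the step responsible for the output being independent of $M$, and in the special case where $M$ is a symplectic realization of $G^*$ it implicitly recovers the Ginzburg--Weinstein equivariant linearization. A secondary technical point is to control non-degeneracy of $\omega_\hbar$ and completeness of the Moser flow uniformly on $[0,1]$ --- handled either by an openness argument from $\hbar=1$ combined with an a priori estimate, or by exhibiting the interpolating forms explicitly.
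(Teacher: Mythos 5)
The paper does not actually prove this statement: Theorem \ref{linearization} is recalled verbatim from \cite{alekseev}, so there is no internal argument here to compare yours against. That said, your strategy --- a Moser deformation along a family of Poisson--Lie structures on $G^*$ obtained by rescaling the $r$-matrix, interpolating between the Lu--Weinstein structure at $\hbar=1$ and the linear Lie--Poisson structure on $\mathfrak g^*$ at $\hbar=0$, with the universality of $\nu$ and $u$ coming from the fact that all deformation data live on $G^*$ and reach $M$ only through $\Psi$ --- is precisely the strategy of the original proof in \cite{alekseev}, so the roadmap is the right one.

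As a proof, however, the proposal has a genuine gap, and you flag it yourself: everything hinges on (i) actually constructing the family $\omega_\hbar$ with $\omega_1=\omega$ for which $\Psi$ remains a Lu moment map relative to $\pi_\hbar$, (ii) verifying that $\tfrac{d}{d\hbar}\omega_\hbar=d(\Psi^*\gamma_\hbar)$ with $\gamma_\hbar$ intrinsic to $G^*$, and (iii) controlling nondegeneracy of $\omega_\hbar$ on all of $[0,1]$. None of these is carried out. Point (i) is not an ansatz one is free to posit: the family must be exhibited, e.g.\ as $\omega_\hbar=\omega-\Psi^*\nu_\hbar$ for explicit $2$-forms $\nu_\hbar$ produced by a homotopy formula for the scaling action on $G^*\cong\mathfrak g^*$, and the Lu moment-map identity for each $\hbar$ is then a Maurer--Cartan computation --- this is the only place where the hypotheses that the action is Poisson and that $\Psi$ is a Lu moment map are actually used, so it cannot be deferred. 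Point (iii) is not dispatched by ``openness from $\hbar=1$ plus an a priori estimate'' without producing the estimate; one needs to see from the specific form of the correction (it is $\Psi$-basic, and its contractions with fundamental vector fields are controlled by the moment-map identity) why degeneracy cannot occur along the path. The parts of your argument that are complete as stated are the soft ones: completeness of the Moser flow because its trajectories stay in compact $G$-orbits, and the factorization of the time-one map through $\psi$ to yield the universal $u$ and $\nu$. In short: correct strategy, coinciding with the cited source, but the decisive computations are announced rather than performed.
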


In particular, linearizations of dressing orbits on $G^*$ are
coadjoint orbits in $\mathfrak{g}^*$.  For example, let $G=U(n)$ with
the standard Poisson structure; then $G^*=\mathcal{B}$ with
the Lu--Weinstein Poisson structure. The indentification of $\B$ with
$\H$ via the equation $AA^*=\exp(2K)$ induces an isomorphism of $G$-spaces
$\B_r \cong \H_r$ between dressing and coadjoint orbits. 
Moreover, the linearization of $\B_r$ with the Lu--Weinstein Poisson structure
is exactly $\H_r$ with the KKS Poisson structure.

For a $G$-Hamiltonian space $(M, \varpi, \psi)$, we can define the
projection map onto the positive Weyl chamber
$\sigma= \pi \circ \psi: M \to W_+$ and the corresponding Duistermaat--Heckman
measure $\Dh_0(M, \varpi, \psi)= \sigma_* \, \varpi^N/N! $, where
$N={\rm dim}\, M/2$.  Similarly, we define the measure
$\Dh(M,\omega,\Psi)=\sigma_* \, \omega^N/N!$.

\begin{lemma}  \label{lemma:lin}
In the setup of Theorem \ref{linearization}, we have
$\Dh(M,\omega,\Psi)=\Dh_0(M, \varpi, \psi)$.
\end{lemma}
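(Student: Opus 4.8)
The plan is to use the second assertion of the Linearization Lemma, namely that $u_M\colon (M,\varpi)\to(M,\omega)$, $u_M(m)=u(\psi(m))\cdot m$, is a symplectomorphism, together with the observation that the projection $\sigma=\pi\circ\psi\colon M\to W_+$ onto the positive Weyl chamber is invariant under the $G$-action. Since $u_M$ is a symplectomorphism, $u_M^*\omega=\varpi$ and hence $u_M^*\bigl(\omega^N/N!\bigr)=\varpi^N/N!$ as top-degree forms (with matching orientations), so that the change-of-variables formula for the diffeomorphism $u_M$ gives $(u_M)_*\bigl(\varpi^N/N!\bigr)=\omega^N/N!$ as measures on $M$.

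The key step is to check that $\sigma\circ u_M=\sigma$. For this, note that $u_M(m)=u(\psi(m))\cdot m$ lies in the same $G$-orbit as $m$, and that $\psi$, being the moment map of the Hamiltonian $G$-space $(M,\varpi,\psi)$, is equivariant: $\psi(g\cdot m)=\mathrm{Ad}^*_g\,\psi(m)$. Since $\pi\colon\mathfrak g^*\to W_+$ is the quotient map sending a covector to the unique representative of its coadjoint orbit in the positive Weyl chamber, one has $\pi\circ\mathrm{Ad}^*_g=\pi$ for every $g\in G$; therefore
\[
\sigma(u_M(m))=\pi\bigl(\psi(u_M(m))\bigr)=\pi\bigl(\mathrm{Ad}^*_{u(\psi(m))}\psi(m)\bigr)=\pi(\psi(m))=\sigma(m).
\]

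Putting the two steps together,
\[
\Dh(M,\omega,\Psi)=\sigma_*\bigl(\omega^N/N!\bigr)=\sigma_*(u_M)_*\bigl(\varpi^N/N!\bigr)=(\sigma\circ u_M)_*\bigl(\varpi^N/N!\bigr)=\sigma_*\bigl(\varpi^N/N!\bigr)=\Dh_0(M,\varpi,\psi),
\]
which is the assertion. The argument is essentially formal once the Linearization Lemma is available; the only points requiring care are that $u_M$ moves each point within its $G$-orbit (which is exactly the explicit form of $u_M$ given in the Lemma) and the bookkeeping of orientations in the change-of-variables step, neither of which presents a genuine obstacle.
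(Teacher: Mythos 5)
Your proposal is correct and follows essentially the same route as the paper: both use that $u_M$ is a symplectomorphism to push $\varpi^N/N!$ to $\omega^N/N!$, then use equivariance of $\psi$ together with $\pi\circ\mathrm{Ad}^*_g=\pi$ to see that $\sigma\circ u_M=\sigma$. Your write-up merely makes the intermediate steps (which the paper compresses into a single displayed chain of equalities) more explicit.
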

\begin{proof}
  According to Theorem \ref{linearization}, there is a map
  $u_M:M\to M$ such that
  $u_M^*\omega=\varpi$. This implies
  $(u_M)_*\varpi^N=\omega^N$, and therefore we have
\begin{equation*}
\begin{split}
\Dh(M,\omega,\Psi)&=\sigma_*\left(\frac{\omega^N}{N!}\right)=\sigma_*\circ(u_M)_*\left(\frac{\varpi^N}{N!}\right)=\pi_*\circ\psi_*\circ(u_M)_*\left(\frac{\varpi^N}{N!}\right)=\\
&=\pi_*\circ(Ad_u^*)_*\circ\psi_*\left(\frac{\varpi^N}{N!}\right)=\Dh_0(M,\varpi,\psi).
\end{split}
\end{equation*}
Here we  used that $\pi \circ {\rm Ad}^*_g = \pi$ for any $g \in G$.
\end{proof}

Next, recall  Theorem 3.4 of \cite{amw}.
\begin{theorem}[``Linearization commutes with products''] \label{thm:lincommutes}
\label{lincomm}
  Let $(M_i, \omega_i, \Psi_i)$, $i=1,2$, be two Hamiltonian
  $G$-spaces endowed with moment maps in the sense of Lu, and let
  $(M_i, \varpi_i, \psi_i)$, $i=1,2$, be the corresponding
  linearizations (cf. Theorem \ref{linearization}).  Then the space
  $(M_1 \times M_2, \omega= \omega_1 + \omega_2, \Psi(m_1,
  m_2)=\Psi_1(m_1) \Psi_2(m_2))$ carries a Poisson $G$-action for
  which $\Psi$ is the moment map in the sense of Lu, and the
  linearization of this space is $G$-equivariantly symplectomorphic to
  the $G$-Hamiltonian space $(M_1 \times M_2, \varpi=\varpi_1 +
  \varpi_2, \psi = \psi_1 + \psi_2)$. Moreover, the symplectomorphism
  $\xi: M_1 \times M_2 \to M_1 \times M_2$ intertwines the moment maps 
  ${\rm Exp}^{-1} \circ \Psi$ and $\psi$.
\end{theorem}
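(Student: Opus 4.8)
The statement is a monoidal compatibility: Lu's fusion of Hamiltonian Poisson--Lie spaces should correspond, under the linearization of Theorem~\ref{linearization}, to the ordinary product of Hamiltonian $G$-spaces, and the plan is to make the discrepancy between the two constructions completely explicit and then remove it by a Moser-type argument that is simultaneously $G$-equivariant and moment-map preserving. \emph{Step 1 (the fused space).} First I would recall Lu's fusion: for Hamiltonian $G$-spaces $(M_i,\omega_i,\Psi_i)$ with moment maps $\Psi_i\colon M_i\to G^*$ in the sense of Lu, the triple $\bigl(M_1\times M_2,\ \omega_1+\omega_2,\ \Psi(m_1,m_2)=\Psi_1(m_1)\Psi_2(m_2)\bigr)$ is again such a space for the diagonal $G$-action; no correction term is needed in the $2$-form because $G$ carries a genuine Poisson--Lie structure. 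This is checked by feeding the Leibniz rule for the Maurer--Cartan form of $\Psi_1\Psi_2$ into the defining identity of a Lu moment map and using the dressing-equivariance of $\Psi_1$ and $\Psi_2$. Granting this, Theorem~\ref{linearization} applies to $(M_1\times M_2,\omega_1+\omega_2,\Psi)$, and its linearization is, by definition, the Hamiltonian $G$-space $\bigl(M_1\times M_2,\ \varpi':=(\omega_1+\omega_2)-\Psi^*\nu,\ \psi':=\mathrm{Exp}^{-1}\circ\Psi\bigr)$, where $(\nu,u)$ is the universal data of Theorem~\ref{linearization}, with $\nu$ a $G$-invariant closed $2$-form (it equals $\omega-\varpi$ pulled back in the universal case).

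\emph{Step 2 (isolating the discrepancy).} One must compare this with $\bigl(M_1\times M_2,\ \varpi_1+\varpi_2,\ \psi_1+\psi_2\bigr)$, where $\varpi_i=\omega_i-\Psi_i^*\nu$ and $\psi_i=\mathrm{Exp}^{-1}\circ\Psi_i$. Since $\mathfrak g^*$ is contractible, pick a $G$-invariant primitive $\beta$ of $\nu$; then $\varpi'-(\varpi_1+\varpi_2)=d\gamma$ with $\gamma=(\Psi_1,\Psi_2)^*\bigl(\mathrm{pr}_1^*\beta+\mathrm{pr}_2^*\beta-\mathrm{mult}^*\beta\bigr)$, a $G$-invariant exact $2$-form pulled back from $G^*\times G^*$; likewise $\psi'-(\psi_1+\psi_2)$ is the pullback along $(\Psi_1,\Psi_2)$ of $\mathrm{Exp}^{-1}\!\circ\mathrm{mult}-\mathrm{Exp}^{-1}\!\circ\mathrm{pr}_1-\mathrm{Exp}^{-1}\!\circ\mathrm{pr}_2$. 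Thus the entire problem lives on $G^*\times G^*$ and measures the failure of $\mathrm{Exp}$ to intertwine the group law of $G^*$ with addition in $\mathfrak g^*$, i.e.\ it is a Baker--Campbell--Hausdorff type comparison. (In the case $G=U(n)$, $G^*=\B$, this is exactly the discrepancy between $l^\B(A_1A_2)$ and $l(K_1+K_2)$ that the Horn problem is about, which is why this theorem is the right tool for transporting $\mathrm{DH}$-measures.)

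\emph{Step 3 (constructing $\xi$; the main obstacle).} I would produce $\xi$ as the time-one flow of a family of vector fields $X_t$ on $M_1\times M_2$ defined by $\iota_{X_t}\varpi_t=-\gamma$ along an interpolating path $\varpi_t$ from $\varpi_1+\varpi_2$ to $\varpi'$. Three points must be verified: (i) the path can be chosen so that $\varpi_t$ stays nondegenerate — this is not automatic and requires either a smallness estimate for $\gamma$ along the compact dressing orbits or an interpolating path adapted to the fibers of $(\Psi_1,\Psi_2)$; (ii) $X_t$ is complete and $G$-invariant, which holds because $\varpi_t$ and $\gamma$ are $G$-invariant; and (iii) the flow intertwines the moment maps. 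Point (iii) is the heart of the matter: because $\psi_0=\psi_1+\psi_2$ and $\psi_1=\psi'$ genuinely differ (they have different fibers), $\gamma$ cannot simply be taken $G$-basic; one must interpolate a path $(\varpi_t,\psi_t)$ of \emph{Hamiltonian structures}, take $X_t$ Hamiltonian for the $\psi_t$-corrected form, and check the compatibility of $\tfrac{d}{dt}\psi_t$ with $X_t$ along the path. Carrying this through forces the universal pair $(\nu,u)$ of \cite{alekseev} to be normalized ``multiplicatively'', i.e.\ to satisfy a cocycle identity with respect to $\mathrm{mult}\colon G^*\times G^*\to G^*$, and it is this identity, together with the nondegeneracy in (i), that constitutes the real content; this is precisely what Theorem~3.4 of \cite{amw} accomplishes. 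Once $\xi$ is obtained with properties (i)--(iii), $G$-equivariance is (ii) and the concluding clause of the theorem, that $\xi$ intertwines $\mathrm{Exp}^{-1}\circ\Psi$ with $\psi=\psi_1+\psi_2$, is precisely (iii).
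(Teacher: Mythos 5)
You should first be aware that the paper does not prove this statement at all: it is recalled verbatim as Theorem 3.4 of \cite{amw}, so your write-up has to stand as an independent proof, and it does not. The decisive step is missing. In Step 3 you reduce everything to an equivariant Moser-type isotopy whose existence, you say, ``forces the universal pair $(\nu,u)$ of \cite{alekseev} to be normalized multiplicatively, i.e.\ to satisfy a cocycle identity with respect to $\mathrm{mult}\colon G^*\times G^*\to G^*$,'' and you then state that this identity ``is precisely what Theorem~3.4 of \cite{amw} accomplishes.'' That is the theorem you are being asked to prove, so the argument is circular: Steps 1--2 are only the (comparatively easy) reduction showing that the discrepancy between $\varpi'=\omega_1+\omega_2-\psi'^*\nu$ and $\varpi_1+\varpi_2$, and between $\psi'=\mathrm{Exp}^{-1}\circ(\Psi_1\Psi_2)$ and $\psi_1+\psi_2$, is pulled back from $G^*\times G^*$; the entire content of the theorem is the construction of a $G$-equivariant isotopy that deforms both the two-form \emph{and} the moment map simultaneously (your items (i)--(iii)), and this is exactly what you defer rather than carry out. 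A plain Moser flow for the form alone cannot intertwine $\mathrm{Exp}^{-1}\circ\Psi$ with $\psi_1+\psi_2$, since these maps have different fibers, as you yourself note.

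There is also a concrete error in Step 1: with the product form $\omega_1+\omega_2$ and the \emph{plain diagonal} action, $\Psi_1\Psi_2$ is not a moment map in the sense of Lu; indeed the diagonal action is not even a Poisson action when the Poisson--Lie structure on $G$ is nontrivial (infinitesimally, $(\delta x)_{M_1\times M_2}$ contains cross terms which $\mathcal{L}_{x_{M_1}+x_{M_2}}(\pi_{M_1}+\pi_{M_2})$ cannot produce). The correct fused action is the twisted diagonal action, in which $g$ acts on the second factor through the dressing-transformed element determined by $\Psi_1(m_1)$ --- exactly the action the paper uses on $\B_r\times\B_s$, where $gA=A'g'$ and then $g'B=B'g''$. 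This is not cosmetic for your scheme: the $G$-invariance of the correction form $\gamma$, of its primitive on $\mathfrak{g}^*\times\mathfrak{g}^*$, and of the would-be Moser vector field are all asserted with respect to the wrong action (the relevant pulled-back action is not the product coadjoint action), and the claimed closedness and invariance of $\nu$, as well as nondegeneracy along your interpolating path, are asserted rather than proved. The strategy (linearize, localize the discrepancy on $G^*\times G^*$, remove it by an equivariant moment-compatible isotopy) is indeed the right family of ideas, but as written the proposal bottoms out in the cited theorem; for this paper the statement is an external input from \cite{amw}, and an actual proof would have to reproduce the argument given there rather than invoke it.
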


Since the $G$-equivariant symplectomorphism $\xi$ intertwines the moment maps,
the Duistermaat--Heckman measures of the corresponding $G$-Hamiltonian spaces coincide, 
and we obtain the following corollary of Lemma \ref{lemma:lin} and Theorem \ref{thm:lincommutes}:
\begin{corollary}\label{dhcor}
  In the setup of Theorem \ref{lincomm}, we have 
$$
\Dh(M_1 \times M_2, \omega, \Psi) = \Dh_0(M_1 \times M_2, \varpi, \psi).
$$
\end{corollary}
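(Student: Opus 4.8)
The plan is to obtain Corollary~\ref{dhcor} by splicing together Lemma~\ref{lemma:lin} and Theorem~\ref{lincomm}, applied to the product $M_1\times M_2$ viewed as a \emph{single} symplectic manifold carrying a Poisson $G$-action. First I would invoke Theorem~\ref{lincomm} to know that $(M_1\times M_2,\,\omega=\omega_1+\omega_2,\,\Psi)$, with $\Psi(m_1,m_2)=\Psi_1(m_1)\Psi_2(m_2)$, is a symplectic manifold carrying a Poisson $G$-action for which $\Psi$ is a moment map in the sense of Lu. Lemma~\ref{lemma:lin} then applies directly to this triple and yields
$$\Dh(M_1\times M_2,\omega,\Psi)=\Dh_0(M_1\times M_2,\varpi_{\mathrm{lin}},\psi_{\mathrm{lin}}),$$
where $(M_1\times M_2,\,\varpi_{\mathrm{lin}}=\omega-\Psi^*\nu,\,\psi_{\mathrm{lin}}=\mathrm{Exp}^{-1}\circ\Psi)$ is the Hamiltonian $G$-space produced out of this product by the Linearization Lemma (Theorem~\ref{linearization}).

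It then remains to identify the right-hand side with $\Dh_0(M_1\times M_2,\varpi,\psi)$, and this is where the product structure enters. By Theorem~\ref{lincomm} the linearization just obtained is $G$-equivariantly symplectomorphic, via a map $\xi\colon M_1\times M_2\to M_1\times M_2$, to $(M_1\times M_2,\,\varpi=\varpi_1+\varpi_2,\,\psi=\psi_1+\psi_2)$, and moreover $\xi$ intertwines the moment maps $\psi_{\mathrm{lin}}=\mathrm{Exp}^{-1}\circ\Psi$ and $\psi$. From $\xi^*\varpi=\varpi_{\mathrm{lin}}$ one obtains $\xi_*(\varpi_{\mathrm{lin}}^N/N!)=\varpi^N/N!$, with $N=\dim(M_1\times M_2)/2$, and from $\psi\circ\xi=\psi_{\mathrm{lin}}$ one obtains that the projections to the positive Weyl chamber satisfy $\pi\circ\psi_{\mathrm{lin}}=(\pi\circ\psi)\circ\xi$. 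Pushing the Liouville measure forward and using functoriality of the pushforward then gives $\Dh_0(M_1\times M_2,\varpi_{\mathrm{lin}},\psi_{\mathrm{lin}})=\Dh_0(M_1\times M_2,\varpi,\psi)$; chaining this with the displayed equality above completes the proof.

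There is no serious obstacle — the argument is essentially bookkeeping — but one point does deserve care. The linearization of the product, produced abstractly by Theorem~\ref{linearization}, is not literally the product $(\varpi_1+\varpi_2,\psi_1+\psi_2)$ of the two linearizations, only isomorphic to it, so one must use that the comparison symplectomorphism $\xi$ respects the moment maps and not merely the symplectic forms in order to conclude that the two Duistermaat--Heckman pushforwards agree. The underlying fact, namely that $\Dh_0$ is invariant under a $G$-equivariant symplectomorphism intertwining the moment maps, is the same change-of-variables computation already carried out in the proof of Lemma~\ref{lemma:lin}, together with the identity $\pi\circ\mathrm{Ad}^*_g=\pi$.
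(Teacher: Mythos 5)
Your argument is correct and matches the paper's proof: the paper likewise applies Lemma~\ref{lemma:lin} to the product triple $(M_1\times M_2,\omega,\Psi)$ and then uses the fact that the symplectomorphism $\xi$ of Theorem~\ref{lincomm} intertwines the moment maps, so that the two $\Dh_0$ measures coincide by the same change-of-variables computation. The point you flag as deserving care --- that the abstract linearization of the product is only isomorphic, not equal, to $(\varpi_1+\varpi_2,\psi_1+\psi_2)$ --- is exactly the point the paper addresses in the sentence preceding the corollary.
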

We are now ready to prove Theorem \ref{measure}.
\begin{proof}[Proof of Theorem \ref{measure}]
Let $G=U(n), G^*=\mathcal{B}$ and $M_1 = \B_r \cong \H_r, M_2 =\B_s \cong  \H_s$. 
Then, $\Dh_0(M_1 \times M_2, \varpi, \psi)=\mu_{r,s}$ and $\Dh(M_1
\times M_2, \omega, \Psi)=\mu^\B_{r,s}$, and, the equality
$\mu^\B_{r,s}=\mu_{r,s}$ follows from Corollary \ref{dhcor}.
\end{proof}
\end{appendix}


\begin{thebibliography}{99}
\bibitem{alekseev}A. Alekseev, On Poisson actions of compact Lie groups on symplectic manifolds. \textit{J. Differential Geom.} 45 (1997), no. 2, 241--256.
\bibitem{amw}A. Alekseev, E. Meinrenken, C. Woodward, Linearization of Poisson actions and singular values of matrix products. \textit{Ann. Inst. Fourier (Grenoble)} 51 (2001), no. 6, 1691--1717. 
\bibitem{apsz}A. Alekseev, M. Podkopaeva, A. Szenes, The Horn problem and planar networks. Preprint  arXiv:1207.0640.
\bibitem{Buch}A. S. Buch, The saturation conjecture (after A. Knutson and T. Tao). With an appendix by William Fulton. \textit{Enseign. Math.} (2) 46 (2000), no. 1-2, 43--60.
\bibitem{FRpreprint}H. Flaschka, T. Ratiu, A convexity theorem for Poisson actions of compact Lie groups, \textit{preprint IHES-M-95-24}, March 1995.
\bibitem{FZ}S. Fomin, A. Zelevinsky, Total positivity: tests and parametrizations, \textit{Math. Intelligencer} 22 (2000), no. 1, 23--33.
\bibitem{fulton} W. Fulton, Eigenvalues, invariant factors, highest weights, and Schubert calculus. \textit{Bull. Amer. Math. Soc. (N.S.)} 37 (2000), no. 3, 209--249.
\bibitem{guillemin}V. Guillemin, S. Sternberg, The Gelfand--Cetlin system and quantization of the complex flag manifolds, \textit{J. Funct. Anal.} 52 (1983), no. 1, 106--128.
\bibitem{horn}A. Horn, Eigenvalues of sums of Hermitian matrices, \textit{Pacific J. Math.} 12 (1962), 225--241.
\bibitem{Klyachko}A. A. Klyachko, Stable vector bundles and Hermitian operators, \textit{Selecta Math. (N.S.)} 4 (1998), 419--445.
\bibitem{KT}A. Knutson, T. Tao, The honeycomb model of $GL_n(\mathbb C)$ tensor products I: proof of the saturation conjecture, \textit{J. Amer. Math. Soc.} 12 (1999), 1055--1090.
\bibitem{puzzles}A. Knutson, T. Tao, C. Woodward, The honeycomb model of $GL_n({\mathbb C})$ tensor products II: Puzzles determine facets of the Littlewood-Richardson cone, \textit{J. Amer. Math. Soc.} 17 (2004), 19--48.
\bibitem{speyer}D. Speyer, Horn's Problem, Vinnikov Curves and the Hive Cone, \textit{Duke Math. J.} Vol. 127, Number 3 (2005), 395--427.

\end{thebibliography}
\end{document}